\documentclass[11pt]{amsart}

\usepackage{amsmath}
\usepackage{amssymb}
\usepackage{amsfonts}
\usepackage{amsthm}
\usepackage{enumerate}

\usepackage{graphicx}
\usepackage{verbatim}
\newcommand{\ls}{\lesssim}

\newcommand{\wh}{\widehat}

\newcommand{\E}{\mathcal E}

\newcommand{\F}{\mathcal F}

\newcommand{\ci}[1]{_{{}_{\!\scriptstyle{#1}}}}
\newcommand{\cin}[1]{_{{}_{\scriptstyle{#1}}}}

\newcommand{\Be}{\begin{equation}}
\newcommand{\Ee}{\end{equation}}

\newcommand{\Bm}{\begin{multline}}
\newcommand{\Em}{\end{multline}}
\newcommand{\Bea}{\begin{eqnarray}}
\newcommand{\Eea}{\end{eqnarray}}
\newcommand{\Beas}{\begin{eqnarray*}}
\newcommand{\Eeas}{\end{eqnarray*}}
\newcommand{\Benu}{\begin{enumerate}}
\newcommand{\Eenu}{\end{enumerate}}
\newcommand{\Bi}{\begin{itemize}}
\newcommand{\Ei}{\end{itemize}}

\def\sh{{\text{sh}}}
\def\lo{{\text{lg}}}
\def\o{\circ}

\def\intslash{\rlap{\kern  .32em $\mspace {.5mu}\backslash$ }\int}
\def\qsl{{\rlap{\kern  .32em $\mspace {.5mu}\backslash$ }\int_{Q_x}}}

\def\vth{\vartheta}

\def\Q{\mathcal Q}

\def\emph#1{{\it #1 }}
\def\diam{{\text{\it  diam}}}

\def\ga{\gamma}

\def\eg{{\it e.g. }}
\def\cf{{\it cf}}

\def\supp{{\text{\rm supp}}}
\def\rad{{\text{\it rad}}}
\def\sph{{\text{sph}}}

\def\inn#1#2{\langle#1,#2\rangle}
\def\biginn#1#2{\big\langle#1,#2\big\rangle}

\def\noi{\noindent}

\def\meas{{\text{\rm meas}}}

\def\lc{\lesssim}


\def\eps{\varepsilon}

\def\ka{\kappa}
            
\def\la{\lambda}

\def\vphi{\varphi}

\def\fa{{\mathfrak {a}}}


\def\bbC{{\mathbb {C}}}

\def\bbN{{\mathbb {N}}}

\def\bbR{{\mathbb {R}}}

\def\bbZ{{\mathbb {Z}}}

\def\cA{{\mathcal {A}}}

\def\cE{{\mathcal {E}}}
\def\cF{{\mathcal {F}}}

\def\cM{{\mathcal {M}}}

\def\cQ{{\mathcal {Q}}}
\def\cR{{\mathcal {R}}}
\def\cS{{\mathcal {S}}}

\def\cU{{\mathcal {U}}}

\def\cW{{\mathcal {W}}}
\def\cX{{\mathcal {X}}}
\def\cY{{\mathcal {Y}}}




\def\Q{{\hbox{\bf Q}}}

 at 10 true pt

\def\be#1{\begin{equation}\label{ #1}}
\def\endeq{\end{equation}}
\def\endal{\end{align}}
\def\bas{\begin{align*}}
\def\eas{\end{align*}}
\def\bi{\begin{itemize}}
\def\ei{\end{itemize}}

\def\eps{\varepsilon}

\def\emph#1{{\it #1}}
\def\textbf#1{{\bf #1}}

\parskip = 12 pt

\theoremstyle{plain}
   \newtheorem{theorem}{Theorem}[section]

   \newtheorem{proposition}[theorem]{Proposition}
   
   \newtheorem{lemma}[theorem]{Lemma}
   \newtheorem{corollary}[theorem]{Corollary}

   \newtheorem{theorem*}{Theorem}

\theoremstyle{remark}
   \newtheorem{remark}[theorem]{Remark}

\theoremstyle{definition}
   \newtheorem{definition}[theorem]{Definition}

\numberwithin{equation}{section}

\begin{document}

\title{Radial Fourier multipliers in high dimensions}

\author[Y. Heo \ \ \ F. Nazarov \ \ \ A. Seeger]{Yaryong Heo \ \ \ F\"edor Nazarov  \ \ \  Andreas Seeger}

\address{
Department of Mathematics\\ University of Wisconsin-Madison\\Madison, WI 53706, USA}
\email{heo@math.wisc.edu}
\email{nazarov@math.wisc.edu}
\email{seeger@math.wisc.edu}

\subjclass{42B15}

\begin{thanks} {Y.H. supported by  Korea Research Foundation Grant KRF-2008-357-C00002.
F.N. supported in part by NSF grant 0800243.
A.S. supported in part by NSF grant 0652890.}
\end{thanks}

\dedicatory{In memory of Brent Smith}

\begin{abstract} Given a fixed $p\neq 2$,  we prove a simple and effective
 characterization
of all radial multipliers of $\cF L^p(\Bbb R^d)$, provided that the dimension
$d$ is
sufficiently large.
The method also yields  new  $L^q$ space-time
regularity results for solutions of the wave equation in high dimensions.
\end{abstract}


\maketitle

\section*{Introduction}

In this paper we study convolution operators with  radial kernels  acting
on functions defined in $\bbR^d$.  These can also be described as
Fourier multiplier
transformations  $T_m$ defined by
$$
\widehat {T_m f} =m\widehat f,
$$
with radial $m$.
The main question we will be interested in is when the operator $T_m$
is
bounded on $L^p(\bbR^d)$, $1\le p<\infty$.
By duality, the boundedness of $T_m$ on $L^p$ is equivalent
to its boundedness on $L^{p'}$ where $\frac 1p+\frac 1{p'}=1$, so we
may restrict
ourselves to the range $1\le p\le 2$.

A simple characterization
of convolution operators  bounded on $L^p$ (whe\-ther radial or not)
is known only in two cases: $p=1$ and $p=2$; namely,
boundedness on $L^1$ holds if and only if the convolution kernel is a
finite Borel
measure and boundedness on $L^2$ holds if and only if  the multiplier is an
essentially
 bounded function (see \cite{hoer}).
It is currently widely believed that for $1<p<2$, a
full characterization of all $\cF L^p$ multipliers
in reasonable terms is impossible.  For the class of {\em radial} multipliers
we deal with in this paper, numerous sufficient conditions
for boundedness on $L^p$ have been obtained in the literature.
Many of them are in some or another sense close to being necessary
(\cf. \cite{cs},  \cite{bourg}, \cite{lee},  \cite{cgt}, \cite{wolff},
\cite{ms-adv},
and references in those papers) but  no nice necessary and sufficient
conditions have been known. However, recently,
Garrig\'os  and the third author  \cite{gs}
obtained a perhaps  surprising
characterization of the radial multiplier transformations that are bounded
on  the invariant subspace $L^p_\rad$ of radial $L^p$ functions in the range
$1<p<\frac{2d}{d+1}$ (which is optimal for their result). This raised the question
whether the necessary and sufficient conditions
in \cite{gs} actually give a characterization of the radial
multiplier
transformations bounded on  the entire space $L^p(\bbR^d)$.
The main result of the  present paper is to  show that this  is
indeed the case if the dimension is sufficiently large, namely
if $d>\frac{2+p}{2-p}$, $1<p<2$.




\section{Statement of results}
\label{statement}

\begin{theorem}
\label{mainthm}
Let $d\ge 4$, $1<p<p_d:=\frac{2d-2}{d+1}$,
 and let  $m$ be  radial.
Fix an arbitrary Schwartz function
$\eta$ that is not identically $0$.
Then
\begin{equation}\label{equiv}
\big\|T_m\big\|\cin{L^p\to L^p}\,\asymp \,\sup_{t>0}\,t^{d/p}\big\|T_m[\eta(t\cdot)]\big\|\ci{L^p}\,.
\end{equation}
\end{theorem}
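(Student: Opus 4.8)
The lower bound in \eqref{equiv} (that is, $\sup_{t>0}t^{d/p}\|T_m[\eta(t\cdot)]\|_{L^p}\lesssim\|T_m\|_{L^p\to L^p}$) is trivial, since $\|\eta(t\cdot)\|_{L^p}=t^{-d/p}\|\eta\|_{L^p}$. So the content of the theorem is the bound $\|T_m\|_{L^p\to L^p}\lesssim\mathfrak A(m)$, where
\[
\mathfrak A(m):=\sup_{t>0}t^{d/p}\big\|T_m[\eta(t\cdot)]\big\|_{L^p}=\sup_{t>0}\big\|\mathcal F^{-1}[m(t\cdot)\widehat\eta]\big\|_{L^p}.
\]
First I would check that $\mathfrak A(m)$ is, up to constants, independent of the nonzero Schwartz function $\eta$: if $\widehat{\eta_1}=g\cdot\widehat\eta(c\,\cdot)$ with $g\in C^\infty_c$ and $c>0$, then $\mathcal F^{-1}[m(t\cdot)\widehat{\eta_1}]$ is the convolution of a fixed rescaling of $\mathcal F^{-1}[m(ct\cdot)\widehat\eta]$ with a fixed Schwartz function; this, together with the rotation invariance of $\eta\mapsto\mathfrak A(m;\eta)$ (a consequence of the radiality of $m$), a partition of unity on an annulus, and a geometric summation over dyadic dilations (the series $\sum_{j<0}2^{jd/p'}$ absorbs the part of $\widehat\eta$ near the origin, the Schwartz decay the part at infinity) gives $\mathfrak A(m;\eta)\asymp\mathfrak A(m;\eta_0)$ for all admissible $\eta$. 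Hence I may fix $\eta_0$ with $\widehat{\eta_0}$ radial, equal to $1$ on $\{1/2\le|\xi|\le2\}$ and supported in $\{1/4<|\xi|<4\}$. Writing $m(\xi)=h(|\xi|)$, taking a standard dyadic cutoff $\beta$ with $\sum_k\beta(2^{-k}|\xi|)\equiv1$, and setting $m_k=m\cdot\beta(2^{-k}|\cdot|)$, $\mathcal K_k=\mathcal F^{-1}m_k$, $\mathcal K_k^{(0)}(x)=2^{-kd}\mathcal K_k(2^{-k}x)$, the identity $\mathcal K_k^{(0)}=\mathcal F^{-1}[m(2^k\cdot)\widehat{\eta_0}]\ast\mathcal F^{-1}[\beta(|\cdot|)]$ shows that $\mathfrak A(m)<\infty$ forces $\sup_k\|\mathcal K_k^{(0)}\|_{L^p}\lesssim\mathfrak A(m)=:A$; since $\widehat{\mathcal K_k^{(0)}}$ is supported where $|\xi|\sim1$, the elementary inequality $\|F\|_2\le\|F\|_p^{p/2}\|F\|_\infty^{1-p/2}$ combined with Hausdorff--Young also yields $\sup_k\|\mathcal K_k^{(0)}\|_{L^2}\lesssim A$.

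Next I would reduce to a single scale. Since $T_m=\sum_kT_{m_k}$ with $\widehat{m_k}$ supported where $|\xi|\sim2^k$, and $T_{m_k}f=T_{m_k}(\widetilde P_kf)$ for a fattened Littlewood--Paley projection $\widetilde P_k$, the Littlewood--Paley inequality and the $L^p$-boundedness ($1<p<\infty$) of the vector-valued square function give
\[
\|T_mf\|_{L^p}\lesssim\Big\|\big(\textstyle\sum_k|T_{m_k}\widetilde P_kf|^2\big)^{1/2}\Big\|_{L^p},\qquad\Big\|\big(\textstyle\sum_k|\widetilde P_kf|^2\big)^{1/2}\Big\|_{L^p}\lesssim\|f\|_{L^p}.
\]
Thus it suffices to prove the scale-invariant vector-valued single-scale estimate
\[
\Big\|\big(\textstyle\sum_k|T_{m_k}g_k|^2\big)^{1/2}\Big\|_{L^p}\lesssim A\,\Big\|\big(\textstyle\sum_k|g_k|^2\big)^{1/2}\Big\|_{L^p}\qquad(\widehat{g_k}\text{ supported in }\{|\xi|\sim2^k\}).
\]

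For this, I would split each $\mathcal K_k$ into pieces $\mathcal K_{k,l}$ supported in the spatial shells $\{|x|\sim2^{l-k}\}$, $l\ge0$. By Minkowski it is enough to bound, for each $l$,
\[
\Big\|\big(\textstyle\sum_k|g_k\ast\mathcal K_{k,l}|^2\big)^{1/2}\Big\|_{L^p}\lesssim2^{-l\delta}A\,\Big\|\big(\textstyle\sum_k|g_k|^2\big)^{1/2}\Big\|_{L^p},\qquad\delta=\delta(d,p)>0,
\]
since the unit-frequency rescalings satisfy $\sum_l\|\mathcal K_{k,l}^{(0)}\|_{L^p}^p\lesssim A^p$ (and $\sum_l\|\mathcal K_{k,l}^{(0)}\|_{L^2}^2\lesssim A^2$), so summing the geometric series in $l$ closes the estimate. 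Now $\mathcal K_{k,l}$ is a radial function on a shell of radius $2^{l-k}$ essentially frequency-localized to $|\xi|\sim2^k$; writing $g_k\ast\mathcal K_{k,l}=\int a_{k,l}(r)r^{d-1}(g_k\ast\sigma_r)\,dr$ over normalized sphere-measures $\sigma_r$ and using the stationary-phase expansion of $\widehat{\sigma_r}=\widehat{d\sigma}(r\,\cdot)$ on $\{r|\xi|\sim2^l\}$ realizes $T_{\mathcal K_{k,l}}$, modulo a harmless smooth frequency-localized operator (which includes the factor $(-\Delta)^{-(d-1)/4}$), as an average $\int b_{k,l}^{\pm}(r)\,e^{\pm ir\sqrt{-\Delta}}(\cdot)\,dr$ of half-wave propagators, with $b_{k,l}^{\pm}$ the radial profile of $\mathcal K_{k,l}$ times $r^{(d-1)/2}$. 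After rescaling $r=2^{l-k}s$ and normalizing, the $l$-th shell estimate becomes a \emph{space-time} square-function bound for the half-wave group,
\[
\Big\|\big(\textstyle\sum_k\big|\!\int_{1/2}^2c_k(s)\,e^{i2^{l-k}s\sqrt{-\Delta}}g_k\,ds\big|^2\big)^{1/2}\Big\|_{L^p(\mathbb R^d)}\lesssim2^{-l\delta}A\,\Big\|\big(\textstyle\sum_k|g_k|^2\big)^{1/2}\Big\|_{L^p(\mathbb R^d)},
\]
where $\widehat{g_k}$ is supported where $|\xi|\sim2^k$ and the weights satisfy $\sup_k\|c_k\|_{L^q([1/2,2])}\lesssim2^{\,l(1-d/q+(d-1)/2)}A$ for $1\le q\le2$.

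The main obstacle is this last inequality, and it is where the hypothesis $d>\tfrac{2+p}{2-p}$, i.e.\ $p<p_d=\tfrac{2d-2}{d+1}$, enters; I would prove it by interpolating two endpoint bounds. At $L^2$ one estimates the $s$-integral by Cauchy--Schwarz and uses $\sup_k\|c_k\|_{L^1([1/2,2])}\lesssim2^{l(1-d/p+(d-1)/2)}A$ together with the trivial $L^2$ bound for $e^{it\sqrt{-\Delta}}$; because $p<\tfrac{2d}{d+1}$ the exponent $1-d/p+(d-1)/2$ is negative, so this endpoint already carries a gain in $l$. The other endpoint is an $L^{p_0}$-bound with $p_0<p$, obtained from sharp $L^q$ \emph{space-time} ($L^q_{x,t}$) estimates for the half-wave group applied with $q=p_0'$; such estimates, with the sharp exponent, hold once $q$ is sufficiently large, and $p<p_d$ is exactly the condition placing $p'$ (hence $p_0'$) in that range — this is the high-dimensional wave-equation regularity alluded to in the abstract. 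Interpolation then produces the net gain $2^{-l\delta}$ precisely in the stated range of $(d,p)$. All of Littlewood--Paley, the stationary-phase reduction and the wave bounds I would carry out in $\ell^2$-valued form, which costs nothing since the underlying oscillatory-integral and Stein--Tomas-type estimates are insensitive to Hilbert-space-valued functions. Finally, since $\mathfrak A(m)\lesssim\|T_m\|_{L^p_{\rad}\to L^p_{\rad}}$ for a radial choice of $\eta$ (trivially), \eqref{equiv} gives $\|T_m\|_{L^p\to L^p}\asymp\|T_m\|_{L^p_{\rad}\to L^p_{\rad}}$ for radial $m$ in this range, so the characterization of \cite{gs} is in fact a characterization of boundedness on all of $L^p(\mathbb R^d)$.
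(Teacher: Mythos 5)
Your reduction to $\mathfrak A(m)$, the argument for its $\eta$-independence, and the remark at the end about $L^p$ versus $L^p_\rad$ boundedness all match the paper. The trouble is in the middle, where you propose to prove a single-scale vector-valued estimate
\[
\Big\|\big(\textstyle\sum_k|T_{m_k}g_k|^2\big)^{1/2}\Big\|_{L^p}\lc A\,\Big\|\big(\textstyle\sum_k|g_k|^2\big)^{1/2}\Big\|_{L^p}
\]
and then to close it by realizing each shell of the kernel as an average of half-wave propagators and interpolating an $L^2$ endpoint with a ``sharp $L^q$ space-time estimate for $e^{it\sqrt{-\Delta}}$'' at some $q=p_0'>q_d$.

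That last step is the gap. The sharp $L^q_{x,t}$ space-time estimate with a gain of almost $1/q$ derivatives in the range $q>q_d=\tfrac{2d-2}{d-3}$ is exactly the local smoothing statement that this paper \emph{proves} (Theorem \ref{lsmsobolev}) as a consequence of its main technical lemmas; it was not available as a black box in that range before. The previously known bounds (Wolff and the refinements in \cite{LW}, \cite{gs-lsm}, \cite{gss}) only hold for strictly larger $q$, so an argument that feeds them in would give a strictly smaller range of $p$ than Theorem~\ref{mainthm} asserts. The paper does not deduce the multiplier theorem from wave smoothing: it works directly with the convolution kernels $F_{y,r}=\sigma_r*\psi(\cdot-y)$, uses the weak orthogonality bound of Lemma~\ref{scalarlemma}, a density decomposition (Lemma~\ref{udec}), a density-dependent $L^2$ bound (Lemma~\ref{L2bdprop}), and a support estimate (Lemma~\ref{supportlemma}), and combines the $L^2$ bound with the support bound via the dyadic interpolation Lemma~\ref{dyadicinterpol}. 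That ``$L^2$ bound versus support'' mechanism is the actual content of the proof, and your outline black-boxes it into an estimate that is an output, not an input, of the paper.

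A second, related issue: the vector-valued square-function estimate you reduce to is not something one gets for free from the scalar single-scale estimate $\|T_{m_k}g\|_p\lc A\|g\|_p$. The kernels are too rough for a Calder\'on--Zygmund upgrade, and the paper does not prove the vector-valued inequality. Instead it uses a Chang--Fefferman style atomic decomposition built from the level sets of Peetre's square function, splits each kernel into short-range and long-range pieces, establishes the exponential gain $2^{-\ell\eps}$ in Proposition~\ref{germat} for the long-range part, and sums scales using Lemma~\ref{atomsL2bd} and Lemma~\ref{dyadicinterpol}. This device is what allows the scales to be combined without the vector-valued square-function estimate, and your Littlewood--Paley reduction, as written, leaves that nontrivial step unaccounted for.
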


The finiteness of the right hand side is, obviously, necessary for
the $L^p$ boundedness, and the main result here is that it is also
sufficient. The constants implicit in this characterization depend
(of course) on the choice of $\eta$.  The condition in \eqref{equiv} is equivalent to  $\sup_{t>0} \|\cF^{-1}[ m(t\cdot) \widehat \eta ]\|_p<\infty$.
If one chooses $\eta$ to be
radial and such that $\widehat \eta$ is compactly supported away
from the origin,
then one recovers one of the characterizations for
$L^p_\rad$ boundedness in \cite{gs}. Consequently, in the given
range $L^p$ boundedness is equivalent to $L^p_\rad$ boundedness.
We refer the reader to
\cite{gs} for other equivalent formulations.

One special situation is worth mentioning here. Namely,
if  $m$ is compactly supported away from
the origin  and $1<p<p_d$, then
 the convolution operator is bounded on $L^p(\bbR^d)$ if and only if the
(radial) convolution kernel $\widehat m$ belongs to $L^p(\bbR^d)$.

We have no reason to believe that the range for $p$ in Theorem \ref{mainthm} is
even close to the optimal one. It is conceivable that the characterization
holds in low dimensions or even  in the optimal range $p<\frac{2d}{d+1}$,
but proving that will certainly
require new ideas.
We also emphasize that the theorem gives no improvements
for the Bochner-Riesz multiplier problem that
is by now understood in the range $p<\frac{2d+4}{d+4}$, $d\ge 2$ (see \cite{cs},
\cite{lee}). Our result just goes in a
different direction:
it applies to all, however irregular,
radial kernels  and it is to be expected that, using some
additional structural or regularity conditions, one may get some
better range of $p$ for each particular case. Nevertheless, our technique
does yield some improvements upon the existing results in the so-called
local smoothing problem for the wave equation in high dimensions.
This concerns inequalities of the form
\begin{equation}\label{lsmineq}
\Big(\int_{I}\|e^{it\sqrt{-\Delta}}f\|_q^q \,dt\Big)^{1/q}
\le C\ci I \|f\|\cin{L^{q}_{\alpha}},
\end{equation}
for $q>2$; here $I$ is a  compact interval and
$L^{q}_\alpha(\bbR^d)$ denotes
the usual Sobolev (or potential) space where $q$ is the Lebesgue exponent and
$\alpha$ is
the number of derivatives.
Sharp $L^q$-Sobolev inequalities for fixed time were obtained by
  Miyachi \cite{miyachi} and Peral \cite{peral}; they showed
that the operator
$e^{it\sqrt{-\Delta}}$ maps $L^q_\beta (\bbR^d)$ into $L^q(\bbR^d)$
provided that
$\beta\ge (d-1)|1/2-1/q|$, $1<q<\infty$.
In \cite{sogge} Sogge
raised the question
whether  the averaged  inequality \eqref{lsmineq}
could hold with a
gain of almost $1/q$ derivatives compared to the fixed time estimate,  {\it i.e.},
with $\alpha> \alpha(q)= d(1/2-1/q)-1/2$, in the best possible range
$q>2d/(d-1)$ for such an estimate.
 This conjecture is at the top of a tree of other conjectures
in harmonic analysis (including the  cone multiplier,
Bochner-Riesz, Fourier-restriction and Kakeya conjectures)
 and the relation between  the different
questions is discussed, for example,  in \cite{taorestrbr}. The
current techniques  seem to be  insufficient to settle this
problem, as well as many of its consequences,  in the full range
of $q$'s. Some evidence for the smoothing conjecture can be found in
\cite{ms-adv} where the analogous question  for the
$L^q_\rad(L^2_\sph)$ scale  of spaces is settled.
For the $L^q$ spaces even partial results proved to be
rather hard and the first  result was  obtained
by Wolff \cite{wolff}; he established, in a deep and fundamental
paper, the validity of Sogge's conjecture
in two dimensions for the range $q>74$. Versions  of this result for
the higher dimensional cases were obtained by \L aba and Wolff
 \cite{LW}
and further  improvements on the range of $q$'s
 are in \cite{gs-lsm}, \cite{gss};  it is now known
that Wolff's main $\ell^q(L^q) \to L^q$
inequality for plate decompositions of cone multipliers, which implies
 \eqref{lsmineq} for $\alpha>\alpha(q)$, holds with  $q>20$ if $d=2$ and  $q> 2+ \frac{8}{d-2}\frac{2d+1}{2d+2}$ if
$d\ge 3$ ({\it cf.} \cite{gss}).

We improve the current results on the smoothing problem
 in two ways.
First, we widen  the range
in dimensions $d\ge 5$.
Secondly we
strengthen Sogge's conjecture to obtain an  endpoint
result  in \eqref{lsmsobolev} in dimensions $d\ge 4$.

\begin{theorem}\label{lsmsobolev} Suppose $d\ge 4$ and
$q> q_d:=2+\frac{4}{d-3}$.
Then   there is a constant $C_{q,d}$ such  that for all $L>0$,
\begin{equation}\label{unscaledSob}
\frac{1}{2L}\int_{-L}^L \big\|e^{it\sqrt {-\Delta}} f \big\|_q^q \,dt
 \le C_{q,d}^q \big\|(I-L^2\Delta)^{\alpha/2} f\big \|_q^q
\end{equation}
holds for  $\alpha= \alpha(q)= d(1/2-1/q)-1/2$.
\end{theorem}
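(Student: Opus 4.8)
The plan is to deduce Theorem~\ref{lsmsobolev} from Theorem~\ref{mainthm} — more precisely, from the circle of ideas behind it — together with standard Littlewood--Paley reductions and the classical asymptotics of Bessel functions.

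\textbf{Reduction to one frequency scale.} First I would rescale by $(x,t)\mapsto(Lx,Lt)$ to take $L=1$, so that the claim reads $\int_{-1}^{1}\|e^{it\sqrt{-\Delta}}f\|_q^q\,dt\lesssim\|(I-\Delta)^{\alpha/2}f\|_q^q$. Decomposing $f$ into Littlewood--Paley pieces $P_kf$ with $\widehat{P_kf}$ supported in $\{|\xi|\sim2^k\}$ (the low-frequency part being trivial), it suffices to prove, for each $\lambda=2^k\ge1$ and $\widehat f$ supported in $\{|\xi|\sim\lambda\}$, a dyadic estimate
\[
\Big(\int_{-1}^{1}\|e^{it\sqrt{-\Delta}}f\|_q^q\,dt\Big)^{1/q}\;\lesssim\;\lambda^{\alpha(q)-\varepsilon}\,\|f\|_q
\]
with a \emph{fixed} $\varepsilon=\varepsilon(q,d)>0$. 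Given this, the endpoint case $\alpha=\alpha(q)$ follows by summing over $k$ with the triangle inequality and H\"older, using $\ell^q\hookrightarrow\ell^2$ (which is where $q>2$ enters); the whole point of the exponent $q_d=2+\frac4{d-3}$ is that the gain $\varepsilon>0$ is available precisely when $q>q_d$.

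\textbf{Reformulation as a vector-valued radial multiplier bound.} Rescaling to unit frequency turns the dyadic estimate into
\[
\int_{|t|\le\lambda}\|e^{it\sqrt{-\Delta}}g\|_q^q\,dt\;\lesssim\;\lambda^{(d-1)(q-2)/2-\varepsilon q}\,\|g\|_q^q,\qquad\widehat g\ \text{supported in }\{|\xi|\sim1\}.
\]
The operator $g\mapsto\{e^{it\sqrt{-\Delta}}g\}_{|t|\le\lambda}$ is a Fourier multiplier from $L^q(\mathbb R^d)$ to $L^q(\mathbb R^d\times[-\lambda,\lambda])$ whose symbol $\xi\mapsto(e^{it|\xi|})_{|t|\le\lambda}$ depends on $\xi$ only through $|\xi|$: it is a \emph{radial} multiplier in a vector-valued sense. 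Since $q>q_d$ we have $q'<p_d$, so after passing to the adjoint we are exactly in the range where Theorem~\ref{mainthm} applies. Discretizing the $t$-integral (legitimate because $e^{it\sqrt{-\Delta}}$ is band-limited in $t$ on frequencies $\sim1$) and dualizing, the estimate above reduces to a mixed-norm inequality
\[
\Big\|\sum_{|n|\le\lambda}e^{-in\sqrt{-\Delta}}h_n\Big\|_{q'}\;\lesssim\;\lambda^{\beta(q)-\varepsilon}\Big(\sum_{|n|\le\lambda}\|h_n\|_{q'}^{q'}\Big)^{1/q'},\qquad\beta(q):=(d-1)\big(\tfrac12-\tfrac1q\big),
\]
for $h_n$ of frequency $\sim1$ and $q'<p_d$.

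\textbf{The single-scale input.} Each summand is convolution with the radial kernel $K_n=\mathcal F^{-1}[e^{-in|\xi|}\chi(|\xi|)]$, and the role of Theorem~\ref{mainthm} (in a vector-valued/mixed-norm form obtained by following its proof) is to reduce the displayed inequality to bounds on these kernels at a single scale. Here the stationary-phase (Bessel) asymptotics give $K_n(y)\approx|y|^{-(d-1)/2}\big(\phi(|y|-n)+\phi(|y|+n)\big)$ with $\phi$ a fixed Schwartz bump, whence $\|K_n\|_{q'}\sim|n|^{\beta(q)}$ — the Miyachi--Peral fixed-time estimate. Because the $K_n$ live on the disjoint shells $\{|x|\sim|n|\}$, summing these single-scale bounds in $n$ should cost much less than the triangle inequality would; quantifying this and combining it with $\ell^{q'}\hookrightarrow\ell^2$ is what produces the factor $\lambda^{\beta(q)}$, and — since $q>q_d$ is exactly the hypothesis under which the resulting sums over $n$ (and over the Littlewood--Paley scales) converge with room to spare — the extra gain $\lambda^{-\varepsilon}$.

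\textbf{The hard part.} The crux is making the reduction of the last two steps \emph{lossless}. The naive reductions all fail to reach $\alpha(q)$: applying Theorem~\ref{mainthm} separately for each $t$ merely reproduces the Miyachi--Peral fixed-time bound, and throwing away the time-oscillation (e.g.\ estimating the $L^q_t$-norm by the $L^2_t$ square function $(\int|e^{it\sqrt{-\Delta}}g|^2dt)^{1/2}$) is worse still. One has to keep the curvature of the light cone — visible in the decay of the $K_n$ and in the almost-orthogonality of the $e^{-in\sqrt{-\Delta}}$ — coupled to the spatial variable all the way through, which is precisely the technical heart of the argument. A secondary, but genuine, difficulty is the endpoint: because the summation over Littlewood--Paley scales closes only thanks to the power gain $\lambda^{-\varepsilon}$, all the quantitative estimates must be run with $\varepsilon>0$ to spare, and tracking that margin is what singles out the range $q>q_d$.
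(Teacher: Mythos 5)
Your outline starts correctly (rescale to $L=1$, Littlewood--Paley, dualize so that $q'<p_d$, identify the wave kernel at unit frequency as a family of radial kernels supported on shells with $\|K_n\|_{q'}\sim|n|^{\beta(q)}$), and you are right that the machinery of Corollary~\ref{mainlp} is the engine. But the plan has a genuine gap at its central reduction: you claim a single-scale estimate with a power gain,
$\bigl(\int_{-1}^1\|e^{it\sqrt{-\Delta}}P_kf\|_q^q\,dt\bigr)^{1/q}\lesssim 2^{k(\alpha(q)-\varepsilon)}\|P_kf\|_q$,
and propose to get the endpoint by summing in $k$ via triangle inequality and H\"older. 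That single-scale $\varepsilon$-gain estimate is false. The focusing (Knapp) example for the cone shows that $\alpha(q)=d(1/2-1/q)-1/2$ is the sharp exponent at a \emph{single} frequency scale for every $q>2d/(d-1)$, and one checks $q_d=\frac{2d-2}{d-3}>\frac{2d}{d-1}$ for $d\ge4$; so for any fixed $q>q_d$ there is no slack at a single scale. Consistently with this, when you feed the Bessel asymptotics into Corollary~\ref{mainlp} (taking $h(y,r)\approx n^{-(d-1)/2}h_n(y)$ for $r\approx n$), the bound you obtain is $\lambda^{\beta(q)}(\sum\|h_n\|_{q'}^{q'})^{1/q'}$ on the nose, with no $\lambda^{-\varepsilon}$. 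Without that gain, the triangle-inequality sum over scales diverges at the endpoint, and the proof strategy as written cannot close.

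The paper's route is different in exactly this respect: it does not sum single-scale estimates with a frequency gain, it proves the endpoint (in fact the stronger Triebel--Lizorkin bound of Theorem~\ref{triebellizthm}) directly. The two ingredients that make the lossless sum over $k$ possible, and which your sketch omits, are (a) the representation of the dyadic wave kernel $K_k=\cF^{-1}[e^{i|\xi|}\vth(2^{-k}|\xi|)]$ as $2^{k(d-1)/2}\int_{1/2}^2 w_k(\rho)\sigma_\rho\,d\rho$ plus a negligible error (Lemma~\ref{averagelemma}), and (b) an atomic decomposition of $g_k(\cdot,t)$ run for each fixed $t\in[1,2]$, feeding into the refined large-radii bound of Lemma~\ref{largerad} / Proposition~\ref{lsmdualW}. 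The exponential gain $2^{-\ell\eps}$ that drives the paper's argument lives in the Whitney-cube scale $\ell$, not the frequency scale $k$; the role of $q>q_d$ (equivalently $q'<p_d$) is to make $\eps<(d-1)(1/q'-1/p_d)$ positive so the $\ell$-sum converges, not to create a spare power in the frequency. So the hypothesis $q>q_d$ is used in a different place than your sketch assumes, and the ``hard part'' you correctly flag at the end is not just technical detail to be filled in — it is a different mechanism (atomic decomposition in the $t$-slices) that you would need to import wholesale.
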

We remark that this result can be strengthened  further by using
suitable Triebel-Lizorkin spaces, see  \S\ref{lsmsect}. A similar phenomenon
occurs for solutions of Schr\"odinger type equations, see \cite{rs}.

A downside of our method
is, of course,
that it  currently does not yield $L^p$ results in two and three  dimensions.
However, when it does apply,  it
is somewhat simpler than the induction on scales methods
introduced by Wolff.
We also remark that we do not improve on the current range
of the above\-mentioned
 Wolff inequality for plate decompositions, which
has other applications and is interesting in its own right.

\medskip


\noi{\it Structure of the paper.}
In \S\ref {modelsect} we explain the basic idea of the paper,
which is that  weak orthogonality properties may be combined with
support size estimates
 to prove satisfactory $L^p$ bounds. Here we also state a
basic interpolation lemma which is related to the Marcinkiewicz theorem and will be used throughout the paper.
The main section is \S\ref{mainsect} where we outline  the proof of
a discretized version of Theorem \ref{mainthm} for a fixed scale.
A  crucial  $L^2$ estimate needed for this proof is done in \S\ref{L2bd}. The characterization of $L^p$ boundedness for radial multipliers that are compactly supported away from the origin is proved in
\S\ref{compmult}. In \S\ref{largeradsect} we give an important refinement
 of the earlier  estimates,
which is crucial for
putting scales together. This is completed   in \S\ref{atomicsect}
where the relevant
 atomic decomposition techniques are introduced and
applied. The proof of Theorem \ref{mainthm} is concluded in \S\ref{concl}.
In \S\ref{extvar} we state an extension to $H^p$ spaces, $p\le 1$,
 which holds for dimensions $d\ge 2$; moreover we obtain
Lorentz space bounds (including weak type $(p,p)$ inequalities).
The
last section \S\ref{lsmsect} contains the proof of
 (a somewhat strengthened version of)   Theorem \ref{lsmsobolev}.

\medskip

\noi{\it Notation.}
For two quantities $A$ and $B$, we shall write
$A\lc B$ if $A\le CB$ for some  positive
constant $C$,
depending on the dimension and possibly other parameters apparent from the context, for instance Lebesgue exponents.
We write $A \asymp B$ if $A\lc B$ and $B\lc A$.
The cardinality of a finite set $\cE$ is denoted by $\#\cE$.
The $d$-dimensional  Lebesgue measure  of a set $E\subset \bbR^d$ will be denoted by
$\meas(E)$ or  by $|E|$.

\medskip

\noindent{\it Remark.}
This paper is a descendant of the unpublished manuscript \cite{na-se}
with the same title in which  Theorems \ref{mainthm} and
\ref{lsmsobolev} were proved in dimensions $d\ge 5$ for  slightly
smaller ranges of $p$ and $q$.
The approach in the present paper simplifies the one in
\cite{na-se} and was inspired in part by an idea in \cite{heo}.
The authors would like to thank
 Gustavo Garrig\'os and Keith Rogers for  their  comments on various
preliminary versions of \cite{na-se}.

\section{$L^2$ bounds versus support: A simple model case}\label{modelsect}
Since we do not know how to exploit  cancellations in $L^p$ directly,
 we use the  strategy of controlling the $L^2$ norm and the size of the support
simultaneously to get our $L^p$
bounds. We start with describing
 a simple model case for which we have some limited
orthogonality,  but not enough to prove a favorable $L^2$ bound.

\begin{lemma}\label{modellemma}
Suppose we are given  a finite number of  complex-valued $L^2$-functions
$\{f_z\}$ indexed by $z\in \bbZ^d$
such  that each function $f_z$
is supported in a cube $Q_z$ of sidelength $1$.
Suppose also that the family  $\{f_z\}$ satisfies
\begin{equation} \label{betaorth}
  |\inn{f_z}{f_{z'}} | \le (1+|z-z'|)^{-\beta},
\end{equation}
for some $\beta \in (0,d)$.
Then for $p<  \frac{2d}{2d-\beta}$,
\begin{equation} \label{lpimpr}
\Big\|\sum_{z} a_z f_z\Big\|_p \lc\,\Big(\sum_{z}|a_z|^p
\Big)^{1/p}\,.
\end{equation}
\end{lemma}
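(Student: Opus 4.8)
The plan is to combine three elementary facts — additivity of $L^1$, the inclusion $L^p\hookrightarrow L^2$ on sets of finite measure, and a Schur/Young estimate that converts \eqref{betaorth} into an $\ell^q\to L^2$ bound — after a preliminary spatial bookkeeping step that reduces everything to a single unit cube. For the bookkeeping step I would tile $\bbR^d$ by the unit cubes $R_k=k+[0,1)^d$, $k\in\bbZ^d$. Each $Q_z$ has sidelength $1$, hence meets at most $2^d$ of the $R_k$; I pick one of them, call it $R_{k(z)}$, set $Z_k=\{z:k(z)=k\}$ and $g_k=\sum_{z\in Z_k}a_zf_z$. Then $g_k$ is supported in the concentric triple cube $3R_k$, the family $\{3R_k\}_k$ has overlap $\le 3^d$, and the $Z_k$ partition the (finite) index set. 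Since at each point only $\le 3^d$ of the $g_k$ are nonzero, H\"older in those terms gives $\big\|\sum_z a_zf_z\big\|_p^p=\big\|\sum_kg_k\big\|_p^p\lc\sum_k\|g_k\|_p^p$.

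Next I would estimate each $\|g_k\|_p$. Since $g_k$ lives on a set of measure $3^d$ and $p\le 2$, H\"older gives $\|g_k\|_p\lc\|g_k\|_2$, and expanding the square and using \eqref{betaorth},
\[
\|g_k\|_2^2=\sum_{z,z'\in Z_k}a_z\overline{a_{z'}}\langle f_z,f_{z'}\rangle\le\sum_{z,z'\in Z_k}|a_z|\,|a_{z'}|\,(1+|z-z'|)^{-\beta}=\langle u_k,\kappa*u_k\rangle,
\]
where $u_k=|a|\mathbf 1_{Z_k}$ on $\bbZ^d$ and $\kappa(w)=(1+|w|)^{-\beta}$. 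By H\"older and Young's convolution inequality, $\langle u_k,\kappa*u_k\rangle\le\|\kappa\|_{\ell^r}\|u_k\|_{\ell^q}^2$ whenever $1+\tfrac1{q'}=\tfrac1r+\tfrac1q$, i.e. $\tfrac1r=2-\tfrac2q$; and $\kappa\in\ell^r(\bbZ^d)$ exactly when $\beta r>d$. A direct computation shows $\beta r>d\iff q<\tfrac{2d}{2d-\beta}$ (and, since $\tfrac{2d}{2d-\beta}\le 2$, the exponents in play are admissible for Young's inequality). Hence $\|g_k\|_p\lc\|a\mathbf 1_{Z_k}\|_{\ell^q}$ for every $q<\tfrac{2d}{2d-\beta}$.

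Finally, since $p<\tfrac{2d}{2d-\beta}$ I can fix $q$ with $p\le q<\tfrac{2d}{2d-\beta}$, so that $\|a\mathbf 1_{Z_k}\|_{\ell^q}\le\|a\mathbf 1_{Z_k}\|_{\ell^p}$; then
\[
\Big\|\sum_za_zf_z\Big\|_p^p\lc\sum_k\|g_k\|_p^p\lc\sum_k\sum_{z\in Z_k}|a_z|^p=\sum_z|a_z|^p,
\]
which is \eqref{lpimpr}.

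I expect the one-cube estimate to be the only place needing thought. Because $\beta<d$, the kernel $(1+|z-z'|)^{-\beta}$ is \emph{not} summable, so no $\ell^2\to L^2$ bound is available; the hypothesis $p<\tfrac{2d}{2d-\beta}$ enters precisely to force $\kappa\in\ell^r$ for $r$ just above $d/\beta$, which buys an $\ell^q\to L^2$ bound with $q$ just below $\tfrac{2d}{2d-\beta}$, and that small summability deficit is then absorbed for free by $\ell^p\hookrightarrow\ell^q$. A secondary trap is to tile $\bbR^d$ and restrict each $f_z$ to a tile: restriction to a subcube need not preserve \eqref{betaorth}, which is exactly why in the first step one groups the whole functions $f_z$ (onto triple cubes with bounded overlap) rather than cutting them.
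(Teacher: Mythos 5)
Your proof is correct and shares the same global skeleton as the paper's (tile into unit cubes, exploit the bounded overlap of the resulting supports, pass from $L^p$ to $L^2$ on each cube by H\"older, then use \eqref{betaorth}), but you execute the single-cube step differently. The paper first establishes a \emph{restricted strong type} estimate at the endpoint $p=\tfrac{2d}{2d-\beta}$: after normalizing $\sup_z|a_z|=1$ it bounds $\|F_J\|_2^2$ by a counting/rearrangement argument, replacing the sum over $z'\in E_J$ by the sum over a ball of radius $\asymp u_J^{1/d}$ to obtain $\|F_J\|_2^2\lc u_J^{2-\beta/d}$, sums in $J$, and then recovers the strong type bound in the open range $p<\tfrac{2d}{2d-\beta}$ by interpolation (their dyadic interpolation lemma, or Lorentz-space interpolation). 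You instead apply H\"older and Young's convolution inequality on $\bbZ^d$ to get $\langle u_k,\kappa*u_k\rangle\le\|\kappa\|_{\ell^r}\|u_k\|_{\ell^q}^2$ with $\kappa\in\ell^r$ precisely when $q<\tfrac{2d}{2d-\beta}$, then finish with the embedding $\ell^p\hookrightarrow\ell^q$. The net effect is the same (Young's inequality is, after all, proved by the same interpolation the paper invokes), but your route is shorter and self-contained, avoiding the detour through the restricted-type statement \eqref{lessE} and the separate interpolation lemma. What the paper's version buys is the endpoint restricted-type bound at $p=\tfrac{2d}{2d-\beta}$ itself, and it also serves pedagogically as a warm-up for the dyadic interpolation lemma, which the authors reuse repeatedly later in the paper; your version forgoes the endpoint but is arguably the cleaner proof of \eqref{lpimpr} as stated.
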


The implicit constant in \eqref{lpimpr} depends on $d$, $\beta $ and
$p$.
Note that \eqref{lpimpr} is trivial for $p\le 1$.
We remark that if \eqref{betaorth} were assumed  for some  $\beta>d$, then inequality
\eqref{lpimpr} would also be true for $p=2$ and thereby
for
$1<p<2$ by interpolation.
The assumption \eqref{betaorth} for $\beta<d$ is too weak to
yield the $\ell^2\to L^2$ bound.
 Instead we have to use some improved support properties
when several of the cubes $Q_z$
overlap.

\begin{proof}[Proof of Lemma \ref{modellemma}]
We shall first  prove a weaker
(so-called restricted strong type) inequality that includes the endpoint;
namely for $1\le  p\le  \frac{2d}{2d-\beta}$,
\begin{equation} \label{lessE}
\Big\|\sum_{z\in E} a_z f_z\Big\|_p \lc (\# E)^{1/p} \sup_z |a_z|,
\end{equation}
We may assume that $\sup_z|a_z|= 1$.
Let $x_z\in\bbR^d$ be the center of the  cube $Q_z$  of sidelength $1$
supporting
$f_z$. Split $\Bbb R^d$ into  nonoverlapping cubes  $J$ of sidelength $1$,  put
$E\ci J=\{z\in E:x_z\in J\}$, and define $u\ci J=\#E\ci J$
so that  $\#E =\sum_J u\ci J$.
We have to bound the $L^p$ norm of  $\sum_J F\ci J$, where
$F\ci J=\sum_{z\in E\ci J}a_z f_z.$

Now observe that at each point $x\in\bbR$, at most $3^d$ of the
 functions $F\ci J$
 can be non-zero
simultaneously. Therefore
$$
\Bigl\|\sum_J F\ci J\Bigr\|\ci{p}^p\le 3^{dp}\sum_J \|F\ci J\|\ci{p}^p\,.
$$
Now, according to our weak orthogonality assumption about the functions $f_z$, we have
\begin{align*}
\big\|F\ci J\big\|\ci{2}^2&\le \sum_{z\in E\ci J}\sum_{z'\in E\ci J}(1+|z-z'|)^{-\beta}
\\&
\le \sum_{z\in E\ci J}\sum_{\substack{z':|z-z'| \le \sqrt d u_J^{1/d}}}
(1+|z-z'|)^{-\beta}
\lc u_J^{2-\frac{\beta}{d}}.
\end{align*}
The measure of the support of $F\ci J$ is at most $2^d$ and therefore,
by H\"older's inequality,
$\|F\ci J\|_p\lc \|F\ci J\|_2$.
Hence
$$\Big\|\sum_J F\ci J\Big\|_p \lc \Big(\sum_J \big\|F\ci J\big\|_2^p\Big)^{1/p}
\lc \Big(\sum_J
u_J^{(2-\frac \beta d)\frac p2}
\Big)^{1/p}
$$
and if
$(2-\frac \beta d)\frac p2\le 1$,
then the last expression is bounded by
$(\sum_J u\ci J)^{1/p}\le (\#E)^{1/p}$.
This yields  \eqref{lessE}.

The improved bound \eqref{lpimpr} can be deduced by  using interpolation theorems for
Lorentz spaces
 (see \cite{stw}, ch. V).
Consider the operator on sequences $\fa=\{a_z\}_{z\in \bbZ^d}$, given by
$T[\fa]=\sum_{z} a_z f_z$. Then \eqref{lessE} states that
$T$ maps the Lorentz space $\ell^{p,1}$ to $L^p$, for
$p\le  2d/(2d-\beta)$
and, by interpolation, one deduces the
inequality \eqref{lpimpr} in the open range
$p<  2d/(2d-\beta)$
\end{proof}

We wish to give  a direct proof of the last  interpolation result
 based on a dyadic
interpolation lemma, which will be  frequently  used in this
 paper. For closely
related considerations see also the expository note \cite{tao-lor} by Tao.

\begin{lemma} \label{dyadicinterpol}
Let $0<p_0<p_1<\infty$.
Let $\{F_j\}_{j\in\bbZ}$ be a sequence of measurable functions on a measure space
$\{\Omega, \mu\}$,
and let $\{s_j\}$ be a sequence of nonnegative numbers.
Assume that, for all $j$,
the inequality
\begin{equation}\label{pnuassumption}
\|F_j\|_{p_\nu}^{p_\nu}\le 2^{j p_\nu} M^{p_\nu} s_j
\end{equation}
holds  for $\nu=0$ and $\nu=1$.
Then for all $p\in (p_0,p_1)$, there is a constant $C=C(p_0,p_1,p)$ such
 that
\begin{equation}\label{dyadicinterpolconclusion}
\Big\| \sum_j F_j\Big\|_p^p \le C^p M^p \,\sum_j 2^{jp} s_j
\end{equation}
There is an analogous statement for the case $p_0=0$  where the assumption
\eqref{pnuassumption} for $\nu=0$ is replaced with
$\meas(\{x:F_j(x)\neq 0\})\le s_j$, and the conclusion
\eqref{dyadicinterpolconclusion} holds for $0<p<p_1$.
\end{lemma}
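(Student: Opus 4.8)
The plan is to establish the standard distribution-function (Marcinkiewicz-type) estimate, arranged so that it uses only Chebyshev's inequality and never the triangle inequality, making the argument insensitive to whether $p_\nu\ge 1$ or $p_\nu<1$. By homogeneity we may take $M=1$; we may assume $S:=\sum_j 2^{jp}s_j<\infty$ and, after reducing to finite partial sums (the final bound being uniform, the general case follows by the usual completeness argument in $L^p$), that only finitely many $F_j$ are nonzero. Fix $\delta\in(0,\,1-p/p_1)$ and $\delta'\in(0,\,(p-p_0)/p_0)$; in the endpoint case $p_0=0$ only $\delta$ is needed. For $\lambda>0$ let $k(\lambda)$ be the integer with $2^{k(\lambda)}\le\lambda<2^{k(\lambda)+1}$, and split $\sum_j F_j=U_k+V_k$ with $U_k=\sum_{j\le k}F_j$, $V_k=\sum_{j>k}F_j$. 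By the layer-cake formula and the inclusion $\{|U_k+V_k|>\lambda\}\subseteq\{|U_k|>\lambda/2\}\cup\{|V_k|>\lambda/2\}$,
\begin{multline*}
\Big\|\sum_j F_j\Big\|_p^p\lc\int_0^\infty\lambda^{p-1}\,\meas\big(\{|U_{k(\lambda)}|>\tfrac\lambda2\}\big)\,d\lambda\\
+\int_0^\infty\lambda^{p-1}\,\meas\big(\{|V_{k(\lambda)}|>\tfrac\lambda2\}\big)\,d\lambda.
\end{multline*}

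For the first (``low-frequency'') integral I would control $U_k$ by the $p_1$-bound, using a geometric splitting of the threshold: with weights $c_j=(1-2^{-\delta})2^{-\delta(k-j)}$ ($j\le k$), which satisfy $\sum_{j\le k}c_j=1$, one has $\{|U_k|>2^{k-1}\}\subseteq\bigcup_{j\le k}\{|F_j|>c_j2^{k-1}\}$, so Chebyshev's inequality in $L^{p_1}$ (valid for any $p_1>0$) gives
\begin{equation*}
\meas\big(\{|U_k|>2^{k-1}\}\big)\le\sum_{j\le k}\frac{\|F_j\|_{p_1}^{p_1}}{(c_j2^{k-1})^{p_1}}\lc 2^{-kp_1}\sum_{j\le k}2^{\delta p_1(k-j)}\,2^{jp_1}s_j.
\end{equation*}
Since for $\lambda\in[2^k,2^{k+1})$ we have $\lambda/2\ge 2^{k-1}$ and $\int_{2^k}^{2^{k+1}}\lambda^{p-1}\,d\lambda\asymp 2^{kp}$, the first integral is $\lc\sum_k 2^{k(p-p_1)}\sum_{j\le k}2^{\delta p_1(k-j)}2^{jp_1}s_j$; interchanging the nonnegative sums this equals $\sum_j 2^{j(p_1-\delta p_1)}s_j\sum_{k\ge j}2^{k(p-p_1+\delta p_1)}$, and since $p-p_1+\delta p_1<0$ the inner series is $\asymp 2^{j(p-p_1+\delta p_1)}$, so the whole expression collapses to $\asymp\sum_j 2^{jp}s_j=S$. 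The second integral is handled in the mirror-image way: controlling $V_k$ by the $p_0$-bound with weights $c_j'=(1-2^{-\delta'})2^{-\delta'(j-k)}$ ($j>k$, $\sum_{j>k}c_j'\le 1$) produces $\sum_j 2^{j(p_0+\delta'p_0)}s_j\sum_{k<j}2^{k(p-p_0-\delta'p_0)}$, where now $p-p_0-\delta'p_0>0$ makes the inner series $\asymp 2^{j(p-p_0-\delta'p_0)}$, and again the total collapses to $\asymp S$. In the case $p_0=0$ one replaces this last step by the cruder support bound $\meas(\{|V_k|>2^{k-1}\})\le\sum_{j>k}\meas(\{F_j\ne 0\})\le\sum_{j>k}s_j$, whence the second integral is $\lc\sum_k 2^{kp}\sum_{j>k}s_j=\sum_j s_j\sum_{k<j}2^{kp}\asymp S$.

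The only genuinely delicate point — and the reason the lemma is not an immediate consequence of Minkowski's inequality — is that $U_k$ and $V_k$ are sums over infinitely many indices with no cancellation, so the triangle inequality is unavailable when $p_\nu<1$ and lossy (it sees $\ell^1$ rather than $\ell^p$) even when $p_\nu\ge 1$. Splitting the single threshold $2^{k-1}$ into the geometrically decaying pieces $c_j2^{k-1}$ is exactly the device that repairs this, turning each tail into a convergent geometric sum; the two constraints $\delta<1-p/p_1$ and $\delta'<(p-p_0)/p_0$ — which is precisely where the strict inequalities $p_0<p<p_1$ enter — are what guarantees that the resulting double geometric sums reassemble to $S$. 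Everything else (choosing $k(\lambda)$ so that $2^{k(\lambda)}\asymp\lambda$, the $\lambda/2$-splitting of the level set, and the Tonelli interchange of nonnegative series) is routine bookkeeping.
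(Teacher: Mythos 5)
Your proof is correct, but it takes a genuinely different route from the one in the paper. You run a direct Marcinkiewicz-type distribution-function argument: you use the layer-cake formula, split $\sum_j F_j=U_{k(\lambda)}+V_{k(\lambda)}$ at a threshold-adapted index $k(\lambda)\asymp\log_2\lambda$, and control each half by Chebyshev in $L^{p_1}$ (resp.\ $L^{p_0}$) after dividing the level $2^{k-1}$ into geometrically decaying pieces $c_j2^{k-1}$; the strict constraints $\delta<1-p/p_1$ and $\delta'<(p-p_0)/p_0$ then make the resulting double geometric sums collapse to $\sum_j 2^{jp}s_j$. The paper instead decomposes each $F_j$ by its level sets relative to $2^j$, setting $F_{j,n}=F_j\chi_{\{2^{j+n}\le|F_j|<2^{j+n+1}\}}$, and exploits the pointwise observation that a sum of numbers $b_j$ with $|b_j|\in\{0\}\cup[2^j,2^{j+1})$ satisfies $|\sum_j b_j|^p\lc\sum_j|b_j|^p$; this gives $\|\sum_j F_{j,n}\|_p^p\lc\sum_j\|F_{j,n}\|_p^p$ directly, after which Chebyshev at level $2^{j+n}$ and the choice $\sigma=\min(p_1-p,p-p_0)$ produce a geometric decay $2^{-\sigma|n|}$ that is summed in $n$. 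Both proofs apply Chebyshev against the $p_0$- and $p_1$-hypotheses and both sidestep the $L^p$ triangle inequality (which fails for $p<1$ and loses for $p\ge 1$) by arranging that all triangle-type steps happen for scalars or across geometrically separated scales. Your version is closer to the textbook real-interpolation template and makes the role of the strict inequalities $p_0<p<p_1$ explicit in the choice of $\delta,\delta'$; the paper's is a bit more compact because the single observation about geometric sequences does the $j$-summation in one stroke, leaving only the $n$-sum. Two minor bookkeeping points in your write-up, neither a gap: the weights $c_j'$ for $V_k$ sum to $2^{-\delta'}<1$ rather than to $1$, which only helps the inclusion; and the reduction to finitely many nonzero $F_j$ plus Fatou is indeed the right way to pass to the general case once $\sum_j 2^{jp}s_j<\infty$ is assumed.
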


To see how this is used to  derive
\eqref{lpimpr} from
\eqref{lessE},
we consider the sets of indices
$E_j=\{z\in \bbZ^d:2^{j-1}<|a_z|\le 2^j\}$ and define
$F_j=\sum_{z\in E_j}a_z f_z$. Then
$\|F_j\|\cin{L^p}^p\ls 2^{jp} \# E_j$
for all
$p\in (0,2d/(2d-\beta)]$
by \eqref{lessE}.
Thus
Lemma \ref{dyadicinterpol} immediately yields
$\|\sum_z a_z f_z\|_{p}^p=
\|\sum_j F_j\|_{p}^p\ls \sum_j 2^{pj}\#E_j
\ls \sum_z |a_z|^p
$
for all $p<2d/(2d-\beta).$

\begin{proof}[Proof of Lemma \ref{dyadicinterpol}]
First, replacing $F_j$ by $M^{-1}F_j$, we can reduce the statement
to the case $M=1$. Now,  for $n\in \bbZ$, denote by $E_{j,n}$
the set where $2^{j+n}\le|F_j|<2^{j+n+1}$ and put
$F_{j,n}=\chi\ci{E_{j,n}} F_j$.
Then  $F_j=\sum_{n\in \bbZ}
F_{j,n}$. Observe that if $b_j$ is any numerical sequence such
that for every $j$, the absolute value of $b_j$ either is $0$ or
belongs to $[2^j,2^{j+1})$, then $|\sum_j b_j|^p\lc \sum_j
|b_j|^p$.
 Applying this observation to  $2^{-n}\sum_j F_{j,n}$, we see that for fixed $n$ and $x$,
$$\Big|\sum_j F_{j,n}(x)\Big| \lc \Big(\sum_j|F_{j,n}(x)|^p \Big)^{1/p}
$$
and therefore
$$
\Bigl\|
\sum_j F_{j,n}
\Bigr\|_p^p\lc \sum_j \|F_{j,n}\|_p^p\lc \sum_j 2^{(j+n)p}
\meas\big(\{x: |F_j|\ge 2^{j+n}\}\big).
$$
By Chebyshev's inequality,
$$
\meas\big(\{x: |F_j|\ge 2^{j+n}\}\big)
\le \min\{2^{-p_0n},2^{-p_1n}\}s_j\,.
$$
Thus,
$$
\Bigl\|
\sum_j F_{j,n}
\Bigr\|_p\ls 2^{-\sigma|n|/p}\Big(\sum_j 2^{jp}s_j\Big)^{1/p}
$$
where $\sigma=\min\{p_1-p,p-p_0\}$.
We sum in $n$ to get the statement of the lemma for the case
$p_0>0$. The case $p_0=0$ is very similar and is left to the reader.
\end{proof}



\section{The main inequality}\label{mainsect}
In this section we shall prove the main inequality of this paper, which turns
out to be the key estimate for the case when our  multiplier has compact
support away from the origin; this application is discussed at the
end of the section.

In what follows, we denote by $\sigma_r$ the surface measure on the
$(d-1)$-dimensional  sphere of radius $r$ centered at the origin.1
We shall denote by $\psi_\o$ a fixed radial $C^\infty$ function that
is  compactly supported in a ball of radius $1/10$ centered at the
 origin,
and
whose Fourier transform $\wh\psi_\o$ vanishes to high order (say,  $20 d$)
at the origin.
We set $\psi=\psi_\o*\psi_\o$.

Consider a $1$-separated set $\cY$ of points in $\bbR^d$  and
a $1$-separated set $\cR$  of radii $\ge 1$. Also set
$$\cR_k=\cR\cap[2^k, 2^{k+1}),\qquad k\ge 0.$$ For $y\in \cY$ and
  $r\in \cR$,
  define
\begin{equation} \label{gyr}
F_{y,r}  = \sigma_r * \psi(\cdot-y).
\end{equation}

\begin{proposition}\label{main} Let $\cE$ be a finite subset of $\cY\times \cR$ and let
$\cE_k=\cE\cap (\cY\times\cR_k$).
Let $c: \cE\to \bbC$ be a function satisfying
$|c(y,r)|\le 1$ for all $(y,r)\in \cE$.
Then, for $p<p_d=\frac{2d-2}{d+1}$,
\begin{equation}
\Big\| \sum_{(y,r)\in \cE} c(y,r) F_{y,r}\Big\|_p^p \lc
\sum_k 2^{k(d-1)} \,\# \cE_k;
\end{equation}
here the implicit constant depends only on $p$, $d$ and $\psi$.
\end{proposition}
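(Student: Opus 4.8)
The guiding principle is the one introduced in \S\ref{modelsect}: instead of exploiting cancellation in $L^p$ directly, I would control the sum simultaneously in $L^2$ and through the size of its support, and then interpolate. Each summand $F_{y,r}=\sigma_r*\psi(\cdot-y)$ is supported in the $\tfrac15$-neighbourhood of the sphere $\{|x-y|=r\}$, a set of measure $\lesssim r^{d-1}\asymp 2^{k(d-1)}$ when $(y,r)\in\cE_k$. Writing $G=\sum_{(y,r)\in\cE}c(y,r)F_{y,r}$ and $A:=\sum_k 2^{k(d-1)}\#\cE_k$ for the quantity on the right of the proposition, and letting $\Omega_\mu$ (for dyadic $\mu\ge 1$) be the set of $x$ belonging to at least $\mu$ and fewer than $2\mu$ of the supports $\supp F_{y,r}$, Chebyshev's inequality gives $\meas(\Omega_\mu)\lesssim A/\mu$, and the sets $\Omega_\mu$ cover $\supp G$.

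The analytic core is an $L^2$ estimate, which is in essence the content of \S\ref{L2bd} and which I would invoke as a black box, in the form
\begin{equation*}
\|G\|_{L^2(\Omega_\mu)}^2\,\lesssim\,\mu^{2/(d-1)}\,A .
\end{equation*}
In words, $\|G\|_2$ behaves as though the annuli were orthogonal, apart from a loss that is a controlled power of the overlap multiplicity. That the correct power is $2/(d-1)$ is dictated by the geometry: it encodes how large $\langle F_{y,r},F_{y',r'}\rangle$ — i.e.\ the measure of the overlap of two unit-thickened spheres of radius $\sim 2^k$ — can be as a function of the separation of centres and radii in the $1$-separated configuration $\cE_k$, and it also uses the near-orthogonality between summands attached to widely separated radius blocks, which comes from the high-order vanishing of $\widehat\psi$ at the origin. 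Granting this bound, the proposition follows by a routine H\"older trade-off on each piece:
\begin{equation*}
\|G\|_{L^p(\Omega_\mu)}^p\le\|G\|_{L^2(\Omega_\mu)}^p\,\meas(\Omega_\mu)^{1-p/2}\lesssim\bigl(\mu^{2/(d-1)}A\bigr)^{p/2}(A/\mu)^{1-p/2}=\mu^{\frac{p}{d-1}+\frac{p}{2}-1}\,A ,
\end{equation*}
and the condition $p<p_d=\frac{2d-2}{d+1}$ is exactly $\frac{p}{d-1}+\frac{p}{2}-1<0$, so summing the resulting geometric series over the dyadic values of $\mu$ gives $\|G\|_p^p=\sum_\mu\|G\|_{L^p(\Omega_\mu)}^p\lesssim A$. (One could equally argue one radius block at a time, with $\cE_k$, $G_k$, $2^{k(d-1)}\#\cE_k$ in place of $\cE$, $G$, $A$, and then recombine the blocks using Lemma \ref{dyadicinterpol} together with their near-orthogonality; but it is cleaner to run the multiplicity decomposition across all scales at once.)

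The one substantial step, and essentially the only place real work is needed, is the $L^2$ estimate quoted above: one must compute the measure of the intersection of two thickened spheres of radius $\sim 2^k$ in terms of how far the pair is from being tangent, sum these pairwise contributions over the $1$-separated set $\cE_k$ and over pairs of radius blocks, and verify that the outcome on the multiplicity-$\mu$ set is of size $\mu^{2/(d-1)}A$. Everything else here — the support bound, the Chebyshev estimate for $\meas(\Omega_\mu)$, H\"older's inequality, and the geometric summation — is soft, and the emergence of the exponent $2/(d-1)$, equivalently of the threshold $p_d$, is precisely where the sphere geometry enters.
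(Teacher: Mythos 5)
You have the right overall strategy (the $L^2$--support interpolation from \S\ref{modelsect}, the exponent $2/(d-1)$, and the arithmetic $\tfrac{p}{d-1}+\tfrac p2-1<0\iff p<p_d$), but the decomposition you propose is not the one the paper uses, and your key $L^2$ estimate is mis-attributed. The paper decomposes the \emph{index set} $\cE$ in the parameter space $\bbR^{d+1}$ by a density stopping time (Lemma \ref{udec}), obtaining subsets $\cE_k(u)$ of density type $(u,2^k)$; for each $u$ it then controls the support of $G_u=\sum_{(y,r)\in\cE(u)}c(y,r)F_{y,r}$ by a Vitali covering argument (Lemma \ref{supportlemma}, yielding $\meas(\supp G_{u,k})\lesssim u^{-1}2^{k(d-1)}\#\cE_k$), and controls $\|G_u\|_2$ by the scalar-product estimate (Lemma \ref{L2bdprop}, yielding $\|G_u\|_2^2\lesssim u^{2/(d-1)}\log(2+u)\,A$). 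You instead decompose \emph{physical space} $\bbR^d$ into the multiplicity level sets $\Omega_\mu$ and keep $G$ intact.

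The estimate $\|G\|_{L^2(\Omega_\mu)}^2\lesssim\mu^{2/(d-1)}A$ that you invoke ``as a black box'' is \emph{not} what Lemma \ref{L2bdprop} says: that lemma is an unrestricted $L^2$ bound for a sum over a density-controlled \emph{subset} of $\cE$, not a bound for the full sum restricted to a level set of the overlap function. The density of the configuration in $\bbR^{d+1}$ and the overlap multiplicity at a point $x\in\bbR^d$ are genuinely different quantities (for example, $\mu$ pairs whose spheres all pass near a fixed $x$ lie near a $d$-dimensional cone in $\bbR^{d+1}$ and can have density type far below $\mu$), and neither controls the other. Moreover the proof of Lemma \ref{L2bdprop} is a Plancherel computation built on the scalar products $\inn{F_{y,r}}{F_{y',r'}}$; inserting $\chi_{\Omega_\mu}$ into the integral destroys that Fourier structure, and the only bound that comes for free is the pointwise Cauchy--Schwarz estimate $\|G\|_{L^2(\Omega_\mu)}^2\le 2\mu\sum\|F_{y,r}\|_2^2\lesssim\mu A$, which in your H\"older step gives $\mu^{p-1}A$ and hence only $p<1$. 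So your central $L^2$ claim would require a separate argument, and it is not evidently true. Correspondingly your support step is too cheap: the Chebyshev bound $\meas(\Omega_\mu)\lesssim A/\mu$ is trivial, whereas the paper's support gain is exactly the nontrivial geometric input (the Vitali covering applied to the density decomposition). In short: the numerology and the overall shape are right, but the density decomposition in parameter space, together with its support lemma, is the load-bearing idea here, and your sketch bypasses it without a replacement.
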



Proposition \ref{main} implies   stronger estimates, namely,

\begin{corollary}\label{mainlp}
For $F_{y,r}$ as in \eqref{gyr}
and $p<p_d=\frac{2d-2}{d+1}$,
\begin{equation} \label{mainlpassertion}
\Big\| \sum_{(y,r)\in\cY\times\cR}
 \ga(y,r) F_{y,r}\Big\|_p \lc
\Big(\sum_{(y,r)\in\cY\times\cR}|\ga(y,r)|^p r^{d-1}\Big)^{1/p}.
\end{equation}
Also,
\begin{equation}\label{mainlpcont}
\Big\| \int_{\bbR^d}\int_1^\infty
h(y,r) F_{y,r} \, dr dy \Big\|_p \lc
\Big(\int_{\bbR^d}\int_1^\infty |h(y,r)|^p r^{d-1}dr\, dy \Big)^{1/p}.
\end{equation}
\end{corollary}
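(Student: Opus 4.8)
The goal is to deduce Corollary~\ref{mainlp} from Proposition~\ref{main}. The two assertions \eqref{mainlpassertion} and \eqref{mainlpcont} have the same flavour, and the continuous version \eqref{mainlpcont} follows from the discrete one \eqref{mainlpassertion} by a routine averaging argument (partition $\bbR^d\times[1,\infty)$ into unit boxes, apply the triangle inequality together with the near-disjointness of the supports of the $F_{y,r}$ coming from a fixed box, and absorb the resulting discretization into the constant); I would state this at the end and spend the bulk of the argument on \eqref{mainlpassertion}.

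For \eqref{mainlpassertion}, the plan is to run the Marcinkiewicz-type machinery of Lemma~\ref{dyadicinterpol}. Normalizing so that $\sum_{(y,r)}|\ga(y,r)|^p r^{d-1}=1$, decompose the index set according to dyadic size of $|\ga(y,r)|$: for $j\in\bbZ$ set $\cE^{(j)}=\{(y,r)\in\cY\times\cR:\,2^{j-1}<|\ga(y,r)|\le 2^j\}$ and $G_j=\sum_{(y,r)\in\cE^{(j)}}\ga(y,r)F_{y,r}$, so that the sum to be estimated is $\sum_j G_j$. Writing $c(y,r)=2^{-j}\ga(y,r)$, which satisfies $|c(y,r)|\le 1$ on $\cE^{(j)}$, Proposition~\ref{main} applied to $\cE^{(j)}$ gives
\begin{equation}\label{Gjbound}
\|G_j\|_p^p\lc 2^{jp}\sum_k 2^{k(d-1)}\#\cE^{(j)}_k\lc 2^{jp}\sum_{(y,r)\in\cE^{(j)}}r^{d-1},
\end{equation}
since $r\asymp 2^k$ on $\cR_k$. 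This is exactly an estimate of the form \eqref{pnuassumption} with $M=1$, $s_j=\sum_{(y,r)\in\cE^{(j)}}r^{d-1}$, and one free exponent — but Lemma~\ref{dyadicinterpol} needs \eqref{pnuassumption} at two exponents $p_0<p_1$. The crucial observation is that the bound \eqref{Gjbound} holds not just for the fixed exponent $p$ but for every exponent in the range $(0,p_d)$: Proposition~\ref{main} is valid for all such exponents, so \eqref{Gjbound} holds with $p$ replaced by any $p_0,p_1$ with $0<p_0<p<p_1<p_d$. Feeding these two instances into Lemma~\ref{dyadicinterpol} yields
\begin{equation}
\Big\|\sum_{(y,r)}\ga(y,r)F_{y,r}\Big\|_p^p=\Big\|\sum_j G_j\Big\|_p^p\lc\sum_j 2^{jp}s_j=\sum_j 2^{jp}\sum_{(y,r)\in\cE^{(j)}}r^{d-1}\lc\sum_{(y,r)}|\ga(y,r)|^p r^{d-1},
\end{equation}
which is \eqref{mainlpassertion}.

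The only point requiring genuine care — the ``main obstacle,'' though it is really a bookkeeping issue — is the legitimacy of the two-exponent input to Lemma~\ref{dyadicinterpol}. One must pick $p_1<p_d$ strictly (so $p_1$ is still in the admissible range of Proposition~\ref{main}) and $p_0>0$ arbitrary; both choices are available precisely because the hypothesis is $p<p_d$, leaving room above $p$. One must also make sure the sums over $j$ and over the (finitely many relevant) $k$ are finite so that there is no convergence obstruction — this is automatic since $\ga$ is supported on a finite set, and the final inequality is independent of that finite set. A minor subtlety in the passage to \eqref{mainlpcont} is that the unit boxes in the $(y,r)$-variable should be taken of sidelength comparable to $1$ in $y$ and of length $1$ in $r$, consistent with the $1$-separation hypotheses on $\cY$ and $\cR$, so that the discretized configuration genuinely satisfies the hypotheses of the discrete statement; the measure $r^{d-1}\,dr\,dy$ is comparable to $r^{d-1}$ summed over such boxes, which is what makes the right-hand sides match.
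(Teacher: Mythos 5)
Your proof of \eqref{mainlpassertion} is exactly the paper's: dyadically decompose in the size of $\ga$, apply Proposition~\ref{main} to each level set $\cE^{(j)}$, and feed the resulting bounds (which hold for every exponent below $p_d$, giving the two needed instances of \eqref{pnuassumption}) into Lemma~\ref{dyadicinterpol}. The passage to \eqref{mainlpcont} is also the right idea (averaging), but your sketch of it is slightly off: what makes the argument work is not ``near-disjointness of supports'' of the $F_{y,r}$ within a unit box (in fact those supports essentially coincide rather than being disjoint), but rather Minkowski's integral inequality applied to the parametrization $y=z+w$, $r=n+\tau$ with $(z,n)\in\bbZ^d\times\bbN$ and $(w,\tau)\in[0,1)^d\times[0,1)$: one pulls the $(w,\tau)$-integral outside the $L^p$ norm, applies \eqref{mainlpassertion} to the discrete sum in $(z,n)$ for each fixed $(w,\tau)$, and then uses H\"older in $(w,\tau)$ to recover the continuous right-hand side.
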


\begin{proof}
Denote by $\cE^j$, $j\in\bbZ$,
the set of all $(y,r)\in \cY\times \cR$ for which $2^{j-1}< |\ga(y,r)|\le 2^j$.
By Proposition \ref{main}
 we see that  $\| \sum_{(y,r)\in \cE^j} \ga(y,r) F_{y,r}\|_p^p$
is dominated by
$C_p^p
2^{jp} \sum_{(y,r)\in \cE^j}  r^{d-1}$
for all $p<p_d$, and \eqref{mainlpassertion} follows
 by the
dyadic interpolation Lemma \ref{dyadicinterpol}.

To prove \eqref{mainlpcont}, we write $y=z+w$, where $z\in \bbZ^d$,
$w\in Q_\circ:=[0,1)^d$ and $r=n+\tau$ where $n\in \bbN$,
  $0\le\tau<1$.
Then, by Minkowski's inequality, the left hand side of
\eqref{mainlpcont} is dominated by
\begin{multline*}
\iint_{Q_\circ\times[0,1)}
\Big\|\sum_{z\in \bbZ^d}\sum_{n=1}^\infty h(z+w,n+\tau) F_{z+w,n+\tau}
\Big\|_p dw\, d\tau\\
\lc
\iint_{Q_\circ\times[0,1)} \Big(\sum_{z\in \bbZ^d}\sum_{n=1}^\infty
|h(z+w,n+\tau)|^p (n+\tau)^{d-1}\Big)^{1/p} dw\, d\tau.
\end{multline*}
Now \eqref{mainlpcont} follows by H\"older's inequality.
\end{proof}

If
$h$ has a tensor product structure,
namely, $h(y,r)=g(y)\beta(r)$,
then the expression $\iint h(y,r) F_{y,r}dr$ can be interpreted as
a convolution of a radial kernel with $g$.
In \S\ref{compmult}  we shall see  how
 this model case implies
the version of  our theorem
for radial multipliers that are
compactly supported away from the origin.

We shall present the proof of Proposition \ref{main}
(leaving one part to the next section).

\medskip
\noi{\bf Estimates for scalar products.}
We aim at a good $L^2$ estimate for $\sum c_{y,r} F_{y,r}$
and  make use of some
(albeit weak) orthogonality property of the summands.
This property   is  expressed by
\begin{lemma}\label{scalarlemma}
For any choice of $r, r'>1$ and $y, y'\in \bbR^d$
\begin{equation}\label{scalarprod}
\big|\biginn{F_{y,r}}{F_{y'\!,r'}}\big|
\lc \frac{(rr')^{\frac{d-1}{2}}}{(1+|y-y'|+|r-r'|)^{\frac{d-1}{2}}}.
\end{equation}
\end{lemma}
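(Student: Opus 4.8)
The quantity $\biginn{F_{y,r}}{F_{y',r'}}$ is, by definition of $F_{y,r}=\sigma_r*\psi(\cdot-y)$, equal to $(\sigma_r*\psi*\widetilde{\sigma_{r'}*\psi})(y-y')$ where $\widetilde g(x)=\overline{g(-x)}$; since $\psi$ and $\sigma_r$ are real and radial this is $(\sigma_r*\sigma_{r'}*\psi*\psi)(y-y')$, i.e. $(\sigma_r*\sigma_{r'})*\Psi$ evaluated at $y-y'$, where $\Psi=\psi*\psi*\psi_\o*\psi_\o$ is a fixed Schwartz bump supported in a ball of radius $\le 1$ whose Fourier transform vanishes to order $20d$ at the origin. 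So the whole estimate reduces to a pointwise bound on the smoothed convolution of two spheres,
\[
\big|(\sigma_r*\sigma_{r'})*\Psi\,(v)\big|\lc \frac{(rr')^{(d-1)/2}}{(1+|v|+|r-r'|)^{(d-1)/2}},\qquad v=y-y'.
\]
I would prove this on the Fourier side: $\widehat{\sigma_r}(\xi)=c_d\, r^{d-1}(r|\xi|)^{-(d-2)/2}J_{(d-2)/2}(r|\xi|)$, so
\[
(\sigma_r*\sigma_{r'})*\Psi\,(v)=\int_{\bbR^d} \widehat{\sigma_r}(\xi)\,\widehat{\sigma_{r'}}(\xi)\,\widehat\Psi(\xi)\,e^{2\pi i v\cdot\xi}\,d\xi.
\]
Using the standard asymptotics $|\widehat{\sigma_r}(\xi)|\lc r^{d-1}(1+r|\xi|)^{-(d-1)/2}$, the product $|\widehat{\sigma_r}(\xi)\widehat{\sigma_{r'}}(\xi)|$ is $\lc (rr')^{(d-1)/2}|\xi|^{-(d-1)}$ for $|\xi|\gtrsim 1$, and the rapid decay and vanishing of $\widehat\Psi$ makes the $\xi$-integral converge and kills the region $|\xi|\lesssim 1$; this already gives the bound with $|v|$ and $|r-r'|$ replaced by $1$. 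To get the gain in $|v|$ and in $|r-r'|$ one must integrate by parts / exploit the oscillation, which is the crux.

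\textbf{Main steps.} First, dispose of the region $|\xi|\le 1$: there $|\widehat\Psi(\xi)|\lc |\xi|^{20d}$ and $|\widehat{\sigma_r}(\xi)\widehat{\sigma_{r'}}(\xi)|\lc (rr')^{d-1}$, contributing $\lc (rr')^{d-1}\lc$ the claimed bound times $(1+|v|+|r-r'|)^{(d-1)/2}(rr')^{-(d-1)/2}\cdot(rr')^{(d-1)/2}$ — actually simpler: this piece is $O_N((rr')^{d-1})$ and is absorbed after noting $r,r'\ge 1$ and comparing; I'd rather handle it by observing the trivial bound $|\biginn{F_{y,r}}{F_{y',r'}}|\lc \|F_{y,r}\|_2\|F_{y',r'}\|_2\lc (rr')^{(d-1)/2}$ always holds (from $\|\sigma_r*\psi\|_2^2=\int|\widehat{\sigma_r}|^2|\widehat\psi|^2\lc r^{d-1}$), so one only needs the decaying factor when $|v|+|r-r'|$ is large, say $\ge 100$. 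Second, for $|\xi|\ge 1$ substitute the precise one-term asymptotic expansion of the Bessel function: $\widehat{\sigma_r}(\xi)=r^{(d-1)/2}|\xi|^{-(d-1)/2}\big(a_+(r|\xi|)e^{2\pi i r|\xi|}+a_-(r|\xi|)e^{-2\pi i r|\xi|}\big)$ with $|\partial_s^j a_\pm(s)|\lc s^{-j}$ for $s\ge 1$. Then $\widehat{\sigma_r}\widehat{\sigma_{r'}}\widehat\Psi$ is a sum of four terms, each of the form $(rr')^{(d-1)/2}|\xi|^{-(d-1)} b_{\pm\pm}(\xi)e^{2\pi i(\pm r\pm r')|\xi|}$ with $b_{\pm\pm}$ a symbol of order $0$ (Schwartz decay from $\widehat\Psi$). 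Passing to polar coordinates $\xi=\rho\omega$, the $v$-integral becomes, after the $\omega$-integration producing again a sphere transform, an oscillatory integral in $\rho\in[1,\infty)$ with phase $\rho(\pm|v|\pm r\pm r')$ (the $\pm|v|$ from stationary phase on the sphere, valid once $|v|\gtrsim 1$) and amplitude decaying like $\rho^{-(d-1)}\rho^{(d-1)/2}$ from the $|v|$-stationary-phase Jacobian times the remaining $\rho^{-(d-1)}$. Integrating by parts in $\rho$ as many times as the symbol decay allows gains factors of $(1+|\pm|v|\pm r\pm r'|)^{-1}$; choosing the sign combination matching $(r-r')$ and orienting $v$ appropriately, the worst denominator is $(1+|v|+|r-r'|)$, and one iterates to reach the exponent $(d-1)/2$ (the symbol is Schwartz so there is no obstruction to iterating that many times). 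Alternatively, and perhaps cleaner: use the classical closed asymptotics for $\sigma_r*\sigma_{r'}$ directly — the convolution of two sphere measures in $\bbR^d$ is an explicit function supported in $|v|\le r+r'$, $|v|\ge|r-r'|$, smooth in the interior with known power-type singularities at the boundary circles, of size $\lc (rr')^{(d-1)/2}(\text{dist to boundary})^{-(d-1)/2}$ roughly; convolving with the Schwartz bump $\Psi$ smooths the singularities at scale $1$ and produces exactly the stated bound. I would present whichever is shorter; the Fourier/stationary-phase route is the most self-contained.

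\textbf{Expected obstacle.} The routine-but-delicate point is bookkeeping the stationary phase on the sphere uniformly in the parameters: the $\omega$-integral $\int_{S^{d-1}}e^{2\pi i\rho\,v\cdot\omega}\,d\omega$ is itself $\widehat{\sigma_1}(\rho v)\approx (\rho|v|)^{-(d-1)/2}(c_+e^{2\pi i\rho|v|}+c_-e^{-2\pi i\rho|v|})$, which is only valid for $\rho|v|\gtrsim 1$; the complementary range $\rho|v|\lesssim 1$, i.e. $\rho\lesssim 1/|v|\le 1$, overlaps the already-handled low-frequency region only when $|v|\gtrsim 1$, so for $|v|\gtrsim 1$ everything is in the "good" regime and the nested integration-by-parts in $\rho$ goes through; the case $|v|\lesssim 1$ is handled by the trivial $L^2$ bound plus the decay in $|r-r'|$ (obtained by a single integration by parts in $\rho$, which only sees the phase $\rho(r-r')$ and needs no stationary phase). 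So the genuine work is combining these two parameter regimes cleanly and tracking that the amplitude derivatives cost only $\rho^{-1}$ each while each integration by parts gains $(1+|v|+|r-r'|)^{-1}$; there is no deep difficulty, only care. I expect the write-up to cite the standard Bessel asymptotics (e.g. Stein, \emph{Harmonic Analysis}) and then do the oscillatory-integral estimate in half a page.
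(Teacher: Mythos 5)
Your proposal is mathematically sound, but it takes a substantially harder route than the paper; in effect you are proving the stronger oscillatory bound $\sum_{\pm,\pm}(1+|r\pm r'\pm|y-y'||)^{-N}$ that the paper explicitly lists in the Remark \emph{after} the lemma and then declines to use. The paper's own proof is much shorter and never needs the oscillation of the Bessel functions, the one-term asymptotics $\widehat\sigma_r(\xi)=r^{(d-1)/2}|\xi|^{-(d-1)/2}(a_+e^{ir|\xi|}+a_-e^{-ir|\xi|})$, stationary phase on the sphere, nor any integration by parts in $\rho$. It proceeds in two quick steps. First, writing the scalar product via Plancherel and polar coordinates as $c(rr')^{d-1}\int B_d(r\rho)B_d(r'\rho)B_d(|y-y'|\rho)|a(\rho)|^2\rho^{d-1}\,d\rho$ with $B_d(s)=c_d s^{-(d-2)/2}J_{(d-2)/2}(s)$ and $a=\widehat\psi$ Schwartz and vanishing to high order at $0$, one uses only the \emph{size} bound $|B_d(s)|\lc(1+|s|)^{-(d-1)/2}$; since $a$ localizes the integral to $\rho\asymp 1$, this immediately yields $|\langle F_{y,r},F_{y',r'}\rangle|\lc (rr')^{(d-1)/2}(1+|y-y'|)^{-(d-1)/2}$, which is equivalent to the stated bound on the range $|r-r'|\lc 1+|y-y'|$. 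Second, when $|r-r'|\gg 1+|y-y'|$, the two functions have \emph{disjoint supports} — $F_{y,r}$ lives in an annulus of radius $r$ and width $O(1)$ around $y$ — so the scalar product is simply $0$. This support-disjointness observation is the key simplification you did not exploit; it turns the whole decay-in-$|r-r'|$ issue into a triviality and collapses the proof to a few lines. What your route buys, in exchange for the extra effort, is the sharper bound of the Remark, but the paper deliberately does not need it; if you want a short proof of the lemma as stated, you should add the support observation and drop the stationary-phase machinery.
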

\begin{proof} Note that $\sigma_r \!=\! r^{-1}\sigma_1(r^{-1}\cdot )$
in the sense of measures and that $\widehat \sigma_r(\xi)\! =\!r^{d-1}
\widehat \sigma_1(r\xi)$. Next, $\wh \sigma_1(\xi)\! =\! B_d(|\xi|)$ where
$B_d(s)= c_d s^{-(d-2)/2} J_{(d-2)/2}(s)$
(and $J_.$ denotes the  usual   Bessel
functions).  Thus $|B_d(s)|\lc (1+|s|)^{-(d-1)/2}$ (see \cite{stw}, ch. IV).
Now $\widehat \psi$ is radial and we can write  $\widehat \psi(\xi)= a(|\xi|)$ where $a$ is rapidly decaying and  vanishes to high order at the origin.
 By Plancherel's theorem, the scalar product
$\inn{F_{y,r}}{F_{y'\!,r'}}$
is equal to a constant times
\begin{align*}
&\int \widehat \sigma_r(\xi)   \widehat \sigma_{r'} (\xi) |\psi(\xi)|^2
e^{i\inn{y'-y}{\xi}} d\xi\\&=
c\, (rr')^{d-1}
\int B_d(r\rho) B_d(r'\rho) B_d(|y-y'| \rho) |a(\rho)|^2 \rho^{d-1} d\rho
\end{align*}
The decay properties of $B_d$ and the behavior of $a$ imply that
$$\big|\biginn{F_{y,r}}{F_{y'\!,r'}}\big|
\lc \frac{(rr')^{\frac{d-1}{2}}}{(1+|y-y'|)^{\frac{d-1}{2}}}
$$
which gives the claimed bound for the range
$|r-r'|\le C(1+|y-y'|).$
But if
$|r-r'|\gg 1+|y-y'|$, then $F_{y,r}$ and $F_{y'\!,r'}$ have disjoint
supports.
Thus  in this case $\inn{F_{y,r}}{F_{y'\!,r'}}=0$. The lemma is proved.
\end{proof}

\begin{remark}
Taking into account
the oscillation
of the Bessel functions, one can obtain the improved bound
\begin{equation*}
|\inn{F_{y,r}}{F_{y'\!,r'}}|\le C_N(rr')^{\frac{d-1}{2}}
(1+|y-y'|)^{-\frac{d-1}{2}}
\sum_{\pm,\pm}
\big(1+\big|r\pm r'\pm |y-y'|\big|\big)^{-N}.
\end{equation*}
We shall not use it  in our proof.
%
\end{remark}

The exponent $(d-1)/2$ in the denominator  in \eqref{scalarprod} is too small
to use orthogonality in a straightforward way; this is analogous to the
weak ortho\-gonality assumption
in Lemma \ref{modellemma}.
However if we impose a suitable density  assumption
on the sets $\cE_k$, then  we can prove a satisfactory $L^2$ bound.
To quantify this, we give a definition.

\begin{definition}
Fix $R\ge 1$ and $u\ge 1$.
Let $\cE$ be a finite  $1$-separated subset of $\bbR^d\times [R, 2R)$. We say
that $\cE$ is of {\it density type
$(u,R)$}  if
$$\#(B\cap \cE) \le u \,\diam(B)$$
for any ball $B\subset \bbR^{d+1}$ of diameter
 $\le R$.
\end{definition}

If we  drop the restriction on the diameter  then for any ball $B$ and
any set $\cE $ of density type $(u,R)$,
\begin{equation}\label{densitytype} \#(B\cap \cE)
\le C_d \Big(1+ \frac{\diam (B)}{R}\Big)^{d}\,  u \,\diam(B).\end{equation}
This is immediate from the definition.

We shall prove in section \S\ref{L2bd} the following $L^2$ inequality
based  on Lemma \ref{scalarlemma}.

\begin{lemma} \label{L2bdprop} 
Let  $u\ge 1$, and,
 for each $k\ge 0$, let $\cE_k\subset \cY\times \cR_k$ be a  set of density
type  $(u,2^k)$.
Assume that $|c(y,r)|\le 1$ for $(y,r)\in \cY\times \cR$.  Then
\begin{equation}\label{l2est}
\Big\|\sum_k \sum_{(y,r)\in \cE_k} c(y,r) F_{y,r}\Big\|_2^2
\lc  u^{\frac{2}{d-1}}\,\log(2+u)
\sum_k 2^{k(d-1)} \,\#\cE_k.
\end{equation}
\end{lemma}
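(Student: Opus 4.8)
\textbf{Proof plan for Lemma \ref{L2bdprop}.}
The plan is to expand the square, $\|\sum_k\sum_{(y,r)\in\cE_k} c(y,r)F_{y,r}\|_2^2 = \sum_{k,k'}\sum_{(y,r)\in\cE_k}\sum_{(y',r')\in\cE_{k'}} c(y,r)\overline{c(y',r')}\inn{F_{y,r}}{F_{y',r'}}$, bound $|c|\le 1$, insert the scalar product estimate \eqref{scalarprod} from Lemma \ref{scalarlemma}, and control the resulting sum by a counting argument that exploits the density-type hypothesis. Write $\fE=\bigcup_k\cE_k$, a subset of $\bbR^d\times[1,\infty)$; for $(y,r),(y',r')\in\fE$ put $\delta=1+|y-y'|+|r-r'|$, which (up to a constant) is comparable to the distance between the two points in $\bbR^{d+1}$. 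Then \eqref{scalarprod} gives $|\inn{F_{y,r}}{F_{y',r'}}|\lc (rr')^{(d-1)/2}\delta^{-(d-1)/2}$, so the whole sum is dominated by
\begin{equation*}
\sum_{(y,r)\in\fE}\sum_{(y',r')\in\fE} \frac{(rr')^{(d-1)/2}}{(1+|y-y'|+|r-r'|)^{(d-1)/2}}.
\end{equation*}

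The key step is to estimate, for each fixed $(y,r)\in\cE_k$, the inner sum $S(y,r)=\sum_{(y',r')\in\fE}(rr')^{(d-1)/2}\delta^{-(d-1)/2}$. Because $F_{y,r}$ and $F_{y',r'}$ have disjoint supports once $|r-r'|\gg 1+|y-y'|$, only the ``diagonal'' band of $k'$ with $|k-k'|\lc 1$ together with nearby radii contributes; more precisely, the summand vanishes unless $|r-r'|\lc\delta$, so $r'\asymp r\asymp 2^k$ whenever $\delta\lesssim 2^{k}$, and for larger $\delta$ one still has $r'\lesssim \delta$. Thus up to constants $S(y,r)\lc 2^{k(d-1)}\sum_{(y',r')\in\fE'} \delta^{-(d-1)/2}$ where $\fE'$ collects the points within $\bbR^{d+1}$-distance $O(2^k)$ of $(y,r)$ (the far points give a rapidly convergent tail handled by \eqref{densitytype}). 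Now decompose dyadically in $\delta$: the spherical shell $\{(y',r'):\delta\in[2^{\ell},2^{\ell+1})\}$ is contained in a ball of diameter $\lc 2^\ell$, and since $2^\ell\le 2^k=R$ for the main range, the density hypothesis gives $\#(\text{shell}\cap\fE)\lc u\,2^\ell$. Hence
\begin{equation*}
\sum_{\ell=0}^{k} 2^{-\ell(d-1)/2}\,u\,2^\ell = u\sum_{\ell=0}^{k} 2^{\ell(1-(d-1)/2)} = u\sum_{\ell=0}^{k} 2^{-\ell(d-3)/2}.
\end{equation*}
For $d\ge 4$ this geometric series converges, and one gets $S(y,r)\lc 2^{k(d-1)}\,u$ — but that loses the exponent: one must be more careful. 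The point is to split the $\ell$-sum at the threshold $2^{\ell_0}\asymp u^{2/(d-1)}$: for $\ell<\ell_0$ we instead use the trivial bound $\#(\text{shell}\cap\fE)\lc 2^{(d+1)\ell}$ coming from $1$-separation (volume of a ball of radius $2^\ell$ in $\bbR^{d+1}$), giving $\sum_{\ell<\ell_0} 2^{-\ell(d-1)/2}2^{(d+1)\ell}=\sum_{\ell<\ell_0}2^{\ell(d+3)/2}\lc 2^{\ell_0(d+3)/2}$, while for $\ell\ge\ell_0$ we use the density bound $u\,2^\ell$, giving $\sum_{\ell\ge\ell_0}2^{-\ell(d-3)/2}u\lc u\,2^{-\ell_0(d-3)/2}$ (here $d\ge 4$ so the exponent is positive and the sum from $\ell_0$ converges). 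Both pieces are $\lc u^{(d+3)/(d-1)}\cdot u^{-1}\cdot\text{(bounded)}$... this bookkeeping must be arranged so the two contributions balance at $\lc u^{2/(d-1)}$ up to the logarithmic factor; the $\log(2+u)$ arises precisely because the two dyadic sums meeting at $\ell_0$ are each only logarithmically summable at the critical exponent, or from the number $\lc\log(2+u)$ of relevant scales when one optimizes the split. Carrying out this optimization carefully is what produces the stated bound $S(y,r)\lc 2^{k(d-1)}u^{2/(d-1)}\log(2+u)$.

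Summing $S(y,r)$ over $(y,r)\in\cE_k$ and then over $k$ yields $\|\sum_k\sum_{(y,r)\in\cE_k}c(y,r)F_{y,r}\|_2^2\lc u^{2/(d-1)}\log(2+u)\sum_k 2^{k(d-1)}\#\cE_k$, which is \eqref{l2est}. The main obstacle is the middle step: the naive dyadic estimate using only the density bound on every shell gives a factor $u$ rather than $u^{2/(d-1)}$, so one genuinely needs to interleave the volume bound (from $1$-separation) at small scales with the density bound at large scales and optimize the crossover, and to be careful that the far field $\delta\gg 2^k$ — where one must use \eqref{densitytype} with its extra $(\diam(B)/R)^d$ factor against the strong decay $\delta^{-(d-1)/2}$ and the constraint $r'\lesssim\delta$ — really does contribute only lower-order terms. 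One should also double-check the boundary case $d=4$, where $(d-3)/2=1/2>0$ so all the relevant geometric series still converge; the hypothesis $d\ge 4$ is exactly what makes the large-scale sum summable.
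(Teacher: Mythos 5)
The overall plan---expand the square, use Lemma \ref{scalarlemma}, and count points in dyadic shells using the density hypothesis---is the right starting point, but the crucial middle step fails, and the claimed bound genuinely cannot be reached by the method you describe.

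Concretely, fix $(y,r)\in\cE_k$ and consider, as you do, the contribution of $(y',r')\in\cE_k$ at $\bbR^{d+1}$-distance $\asymp 2^\ell$. Each scalar product is $\lesssim 2^{k(d-1)}2^{-\ell(d-1)/2}$ by \eqref{scalarprod}, and the number of such $(y',r')$ is at most $\min\{u\,2^{\ell},\,2^{\ell(d+1)}\}$ (density bound versus the $1$-separation volume bound in $\bbR^{d+1}$). These two bounds cross at $2^{\ell_0}\asymp u^{1/d}$, and with that crossover both the near and the far dyadic sums contribute $\asymp u^{(d+3)/(2d)}$. But $(d+3)/(2d) > 2/(d-1)$ for every $d\ge 4$ (e.g.\ $7/8$ vs.\ $2/3$ when $d=4$), so the best this interleaving can give is a strictly \emph{weaker} power of $u$ than the lemma asserts. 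Splitting instead at your proposed $2^{\ell_0}\asymp u^{2/(d-1)}$ makes things worse: the near (volume-bounded) part becomes $\sum_{2^\ell\lesssim u^{2/(d-1)}}2^{\ell(d+3)/2}\asymp u^{(d+3)/(d-1)}\gg u^{2/(d-1)}$. No choice of crossover saves the argument, because the pointwise bound from Lemma \ref{scalarlemma} is simply too crude for pairs with $|r-r'|$ small and $|y-y'|$ small; it does not see the cancellation among nearby bumps. The weaker exponent $u^{(d+3)/(2d)}$ would in turn shrink the $p$-range in Lemma \ref{lpbound} below $p_d$, so this is not merely a cosmetic loss.

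What is missing is a genuinely different, Plancherel-based orthogonality estimate for nearby radii. The paper partitions $\cR_k$ into intervals of length $u^{a}$ with $a=2/(d-1)$. For radii in the \emph{same} interval it does not use Lemma \ref{scalarlemma} at all: instead it applies Cauchy--Schwarz in $r$ (over the $\lesssim u^{a}$ radii in the interval) and then, for each fixed $r$, uses Plancherel together with $\|\widehat\sigma_r\widehat\psi_\circ\|_\infty\lesssim r^{(d-1)/2}$ and the $1$-separation of $\cY$ to get $\|\sum_{y}c(y,r)F_{y,r}\|_2^2\lesssim r^{d-1}\#\{y:(y,r)\in\cE_k\}$ --- an estimate independent of $u$. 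The pointwise scalar-product/density argument is reserved for pairs with $|r-r'|\ge u^{a}$, exactly the regime where it is efficient; there it gives $u^{1-a(d-3)/2}$, which balances against $u^{a}$ at $a=2/(d-1)$. Finally, the cross-$k$ interaction (which you treat only informally as a ``rapidly convergent tail'') is handled in the paper by splitting at $k\approx N(u)\sim\log_2 u$ and applying Cauchy--Schwarz; this is what produces the $\log(2+u)$ factor, not an optimization in the diagonal estimate. Without the Fourier/Plancherel ingredient for nearby radii, the lemma as stated is out of reach.
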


%

\medskip

\newcommand{\mur}{R_{k,u} }

\noi{\bf Density decompositions of sets.}
Assume that $\E\subset\cY\times\cR$ is a finite  $1$-separated set. Let
$\cE_k=\cE\cap(\cY\times \cR_k)$ ({\it i.e.}, only radii in $[2^k, 2^{k+1})$ are involved).
We consider $u\in \cU=\{ 2^\nu, \nu=0,1,2,\dots\}$
and decompose the
sets
$\cE_k$ into subsets of density type $(u,2^k)$.

Let
$\wh \E_k(u)$ be the set of all points $(y,r)\in \cE_k$ that are
contained in some ball $B$ of radius $\rad(B)\le 2^k$  such that
\begin{equation}
\label{lowerboundsforballs}
\#(\cE_k\cap B) \ge u\, \rad(B).
\end{equation}
Also set
$$
\E_k(u)=\wh \E_k(u)\setminus
\bigcup_{\substack{u'\in \cU\\u'>u}}\wh \E_k(u')\,.
$$
Finally set   $\cE(u)= \bigcup_k \cE_k(u).$

\begin{lemma} \label{udec} The sets $\E(u)$  have the following
  properties.

(i) $\E=\bigcup_{u\in \cU}\E(u) = \bigcup_{u\in \cU}\bigcup_{k\ge 0}
 \cE_{k}(u)$ and the unions are  disjoint.

(ii) If $B$ is any ball of radius $\le 2^k$
containing at least $u\,\rad(B)$ points of
$\cE_k$, then
$$B\cap \cE_k\subset \wh \E_k(u)\equiv
 \bigcup_{\substack {u'\in \cU \\ u'\ge u}}\E_k(u').
$$

(iii)  There are finitely many
disjoint balls $B_1,\dots, B_N$ (depending on $u$ and $k$), of
radii $\le 2^{k}$
such that
\begin{equation}\label{summingradii}
\sum_{i=1}^N \rad(B_i) \le u^{-1}\#\cE_k,
\end{equation}
and
\begin{equation}
\label{covering}
\wh \cE_k(u)\subset \bigcup_{i=1}^N B_i^*,\end{equation}
where $B_i^*$ denotes the ball with
 $\rad(B_i^*)=5\rad(B_i)$
and the same center as $B_i$.

\indent(iv)
 $\cE_k(u)$ is a set of density type $(u,2^k)$.
\end{lemma}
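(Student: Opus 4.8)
\textbf{Proof plan for Lemma \ref{udec}.}

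The plan is to establish the four items essentially in the order stated, since each later item leans on the structure set up in the earlier ones. For (i), the key observation is that for a fixed $k$ the set $\cE_k$ is finite, so the quantity $\#(\cE_k\cap B)/\rad(B)$ is bounded above as $B$ ranges over balls of radius $\le 2^k$ containing a point of $\cE_k$ (indeed a ball of radius $\ge 1$ already has $\rad(B)\ge 1$, and $\#(\cE_k\cap B)\le \#\cE_k$). Hence $\wh\cE_k(u)=\emptyset$ for all sufficiently large dyadic $u$, the union $\bigcup_{u\in\cU}\wh\cE_k(u)$ stabilizes, and it equals $\cE_k$ because every point $(y,r)\in\cE_k$ lies in the ball of radius $1$ about itself, which contains $\ge 1\cdot\rad(B)$ points, so $(y,r)\in\wh\cE_k(1)$. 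Disjointness of the $\cE_k(u)$ across $u$ is built into the definition $\cE_k(u)=\wh\cE_k(u)\setminus\bigcup_{u'>u}\wh\cE_k(u')$, once one checks the nesting $\wh\cE_k(u')\subset\wh\cE_k(u)$ whenever $u'\ge u$ — immediate from \eqref{lowerboundsforballs}. Then $\cE_k=\bigsqcup_{u}\cE_k(u)$ and summing over $k$ gives (i). Item (ii) is then just a restatement: the hypothesis on $B$ says exactly $B\cap\cE_k\subset\wh\cE_k(u)$, and $\wh\cE_k(u)=\bigcup_{u'\ge u}\cE_k(u')$ by the nesting and the definition of $\cE_k(u')$.

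For (iii), I would run a greedy (Vitali-type) selection. Among all balls $B$ with $\rad(B)\le 2^k$ satisfying $\#(\cE_k\cap B)\ge u\,\rad(B)$, pick $B_1$ of maximal radius (the supremum of admissible radii is attained since radii are bounded by $2^k$ and the relevant configurations are finite); having chosen $B_1,\dots,B_{i-1}$, pick $B_i$ of maximal radius among admissible balls disjoint from $B_1,\dots,B_{i-1}$; stop when no admissible ball disjoint from the chosen ones remains. The process terminates because each $B_i$ contains at least one point of the finite set $\cE_k$ and the $B_i$ are pairwise disjoint. By maximality, every admissible ball $B$ meets some $B_i$ with $\rad(B_i)\ge\rad(B)$, hence $B\subset B_i^*$ (with the factor $5$ coming from $\rad(B)\le\rad(B_i)$ and $B\cap B_i\neq\emptyset$), which gives the covering \eqref{covering} since every point of $\wh\cE_k(u)$ lies in some admissible ball. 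Finally, from $\#(\cE_k\cap B_i)\ge u\,\rad(B_i)$ and disjointness, $\sum_i \rad(B_i)\le u^{-1}\sum_i\#(\cE_k\cap B_i)\le u^{-1}\#\cE_k$, which is \eqref{summingradii}.

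Item (iv) is the one to be slightly careful with, and I expect it to be the main (though still modest) obstacle, because a point can be removed at level $u$ only on account of balls of radius $\le 2^k$, yet the density-type condition for $\cE_k(u)$ must be checked for all balls of diameter $\le 2^k$. So suppose $B$ is a ball in $\bbR^{d+1}$ with $\diam(B)\le 2^k$; I want $\#(B\cap\cE_k(u))\le u\,\diam(B)$. If $\#(B\cap\cE_k)\le u\,\diam(B)$ there is nothing to prove since $\cE_k(u)\subset\cE_k$. Otherwise $\#(\cE_k\cap B)> u\,\diam(B)=u\cdot 2\,\rad(B)>u\,\rad(B)$ and $\rad(B)\le 2^{k-1}\le 2^k$, so $B$ is admissible at level $2u$ (the next dyadic value after $u$), or at least admissible at some dyadic level $u'>u$; by (ii), $B\cap\cE_k\subset\bigcup_{u'\ge 2u}\cE_k(u')$, which is disjoint from $\cE_k(u)$. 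Hence $B\cap\cE_k(u)=\emptyset$ and the density bound holds trivially. This finishes (iv) and the lemma. \endprf
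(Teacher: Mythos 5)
Your proof is correct and follows essentially the same route as the paper's. Items (i), (ii), and (iv) match the paper's reasoning nearly verbatim: for (iv) in particular, both arguments hinge on the observation that a point of $\cE_k(u)$ cannot lie in $\wh\cE_k(2u)$, so any ball $B$ with $\diam(B)\le 2^k$ and $\#(\cE_k\cap B)>u\,\diam(B)=2u\,\rad(B)$ is admissible at level $2u$ and therefore misses $\cE_k(u)$ entirely. The one place you diverge, in (iii), is cosmetic but worth noting: you run a greedy Vitali-type selection over the full (infinite) family of admissible balls, picking a ball of \emph{maximal} radius at each stage. Your parenthetical ``the supremum of admissible radii is attained since radii are bounded by $2^k$ and the relevant configurations are finite'' is a real claim that needs a small compactness argument (pass to a configuration $S\subset\cE_k$, take a subsequence with converging centers, observe the limit closed ball still contains $S$), which you do not spell out. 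The paper avoids this entirely by first covering the finite set $\wh\cE_k(u)$ with finitely many admissible balls — one per point — and then applying the finite Vitali lemma to that finite family, where the maximum is automatic. Your approach works but is slightly under-justified at that step; the paper's reduction to a finite family first is the cleaner way to organize it.

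One further remark, which applies to both your argument and the paper's: the claim in (i) that $\wh\cE_k(u)=\varnothing$ for large $u$ implicitly uses that the balls $B$ in the definition of $\wh\cE_k(u)$ are understood to have radius bounded below (say $\rad(B)\ge 1$, natural since $\cE_k$ is $1$-separated); otherwise arbitrarily small balls around any single point would be admissible at every level $u$. Your parenthetical ``a ball of radius $\ge 1$ already has $\rad(B)\ge 1$\dots'' suggests you are using this convention, which is the right reading, but it is worth being aware that the argument relies on it.
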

\begin{proof}
In order to prove (i), it suffices to observe that $\wh \cE_k(2^0)=\cE_k$ and
$\wh \E_k(u)=\varnothing$
when $u$ is sufficiently large. (ii) follows immediately from the
definition of the sets $\wh \cE_k(u)$ and $\cE_k(u)$.

To prove (iii),
cover
the set $\widehat \cE_k(u)$
 by a finite number of balls
satisfying
\eqref{lowerboundsforballs}.  We apply the Vitali  covering lemma to
 this family of balls and select {\it disjoint}  balls $B_i$,
$i=1,\dots,N(k,u,\cE)$ so that the five times dilated balls $B_i^*$
cover $\widehat \cE_k(u)$. This yields
\eqref{covering}. The inequality
\eqref{summingradii} follows from the disjointness of the selected
 balls and condition
\eqref{lowerboundsforballs}.

To prove (iv), let $(y,r)\in \cE_k(u)$. By definition $(y,r)\notin
\wh \cE_k(2u)$ and thus, for any ball  $B$ of radius $\rad(B)\le 2^{k}$,
the number of points in $\cE_k$ contained in $B$ is  less than
$2u\,\rad(B) =u\,\diam(B) $. Thus $\cE_k(u)$ is of density type $(u,2^k)$.
\end{proof}

We now set
\begin{equation}\label{Gudef}
G_{u,k}= \sum_{(y,r)\in \cE_k(u)} c(y,r) F_{y,r} \quad\text{ and }\quad
G_u= \sum_k G_{u,k}.
\end{equation}
From the support properties of $\sigma_r*\psi$ it follows immediately
that $G_{u,k}$ is supported  in a set of measure
$\lc 2^{k(d-1)} \#\cE_{k}(u)$, hence of measure $\lc 2^{k(d-1)} \#\cE_{k}$. By the properties of $\cE_{k}(u)$ we get the following improved bound.

\begin{lemma}\label{supportlemma} For all $u\in \cU$,
the Lebesgue measure of the support of $G_{u,k}$
is $\lc u^{-1} 2^{k(d-1)} \#\cE_k.$
\end{lemma}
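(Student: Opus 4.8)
The plan is to bound $\meas(\supp G_{u,k})$ by covering the spherical shells $\supp F_{y,r}$, $(y,r)\in\cE_k(u)$, after sorting them by location via the ball decomposition of Lemma~\ref{udec}(iii). First I would record the geometry of a single block: since $\psi=\psi_\o*\psi_\o$ with $\psi_\o$ supported in the ball of radius $1/10$ about the origin, $F_{y,r}=\sigma_r*\psi(\cdot-y)$ is supported in the shell $\{x:\ r-\tfrac15\le|x-y|\le r+\tfrac15\}$, which for $(y,r)\in\cE_k(u)\subset\cY\times\cR_k$ is an annulus centered at $y$ of mean radius $r\in[2^k,2^{k+1})$ and thickness $\le 1$, hence of Lebesgue measure $\lc 2^{k(d-1)}$. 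Summing over $\cE_k(u)$ already gives the crude bound $\meas(\supp G_{u,k})\lc 2^{k(d-1)}\#\cE_k$ noted before the lemma; the point is to recover the extra factor $u^{-1}$.

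To do this, apply Lemma~\ref{udec}(iii): there are disjoint balls $B_1,\dots,B_N\subset\bbR^{d+1}$, of radii in $[1,2^k]$ (the lower bound $\rad(B_i)\ge 1$ being built into the density decomposition, since a $1$-separated set has at most one point in a ball of radius $<1/2$), with $\sum_i\rad(B_i)\le u^{-1}\#\cE_k$ by \eqref{summingradii}, and with $\cE_k(u)\subset\wh\cE_k(u)\subset\bigcup_i B_i^*$ by \eqref{covering}, where $\rad(B_i^*)=5\rad(B_i)=:\rho_i$. Let $(y_i,r_i)$ be the center of $B_i$. If $(y,r)\in B_i^*$ then $|y-y_i|\le\rho_i$ and $|r-r_i|\le\rho_i$, so the triangle inequality places $\supp F_{y,r}$ inside the fixed shell $\tilde A_i:=\{x:\ r_i-2\rho_i-1\le|x-y_i|\le r_i+2\rho_i+1\}$, which does not depend on the particular $(y,r)\in\cE_k(u)\cap B_i^*$. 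Since $B_i$ must meet $\cE_k\subset\cY\times[2^k,2^{k+1})$ and $\rad(B_i)\le 2^k$, the outer radius of $\tilde A_i$ is $\lc 2^k$ and its width $4\rho_i+2$ is $\lc\rad(B_i)$ (here one uses $\rad(B_i)\ge 1$ to swallow the additive constant), so $\meas(\tilde A_i)\lc 2^{k(d-1)}\rad(B_i)$; in the degenerate case $r_i-2\rho_i-1\le 0$ the shell $\tilde A_i$ is a ball, but then $B_i$ still has to reach radial coordinate $\ge 2^k$, which forces $\rad(B_i)\gtrsim 2^k$, so the trivial bound $\meas(\tilde A_i)\lc 2^{kd}\lc 2^{k(d-1)}\rad(B_i)$ is still fine. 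Finally, since $\supp G_{u,k}\subset\bigcup_{(y,r)\in\cE_k(u)}\supp F_{y,r}\subset\bigcup_{i=1}^N\tilde A_i$, summing yields $\meas(\supp G_{u,k})\le\sum_{i=1}^N\meas(\tilde A_i)\lc 2^{k(d-1)}\sum_{i=1}^N\rad(B_i)\le u^{-1}2^{k(d-1)}\#\cE_k$, which is the assertion.

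Everything here is bookkeeping once Lemma~\ref{udec}(iii) is available. The one step deserving a little care is the measure estimate for $\tilde A_i$ — one must observe that translating a unit-thickness shell by $\lesssim\rho_i$ and dilating its radius by $\lesssim\rho_i$ keeps it inside an annulus of outer radius $\lesssim 2^k$ and width $\lesssim\rad(B_i)$ — together with the (harmless) separate treatment of the degenerate case where this annulus is actually a ball. I do not anticipate any genuine difficulty beyond that.
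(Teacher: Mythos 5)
Your proof is essentially the paper's: cover $\cE_k(u)$ by the Vitali balls from Lemma~\ref{udec}(iii), enclose the supports of the corresponding $F_{y,r}$ in fixed annuli $\tilde A_i$ about the centers $(y_i,r_i)$, bound $\meas(\tilde A_i)\lc 2^{k(d-1)}\rad(B_i)$ (including the degenerate ``annulus is a ball'' case), and invoke~\eqref{summingradii}. You also correctly flag that one must absorb the additive thickness of a single $\supp F_{y,r}$ into $\rad(B_i)$ to avoid getting the useless term $N\cdot 2^{k(d-1)}$; this absorption requires $\rad(B_i)\gtrsim 1$, exactly what the paper is also tacitly using when it asserts the volume of the annulus is $\lc 2^{k(d-1)}\rad(B_i^*)$.

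The one flaw is the parenthetical justification for $\rad(B_i)\ge 1$. The fact that a $1$-separated set has at most one point in a ball of radius $<1/2$ does not give a \emph{lower} bound on the Vitali radii; combined with~\eqref{lowerboundsforballs} it only shows that a small ball in the cover must have $\rad(B)\le 1/u$. The lower bound on the radii is instead a built-in restriction of the density decomposition (one should only admit balls of diameter $\ge 1$ in~\eqref{lowerboundsforballs} and in the definition of density type; otherwise the ball of radius $1/u$ around any point of $\cE_k$ already satisfies~\eqref{lowerboundsforballs}, so $\wh\cE_k(u)=\cE_k$ for every $u$ and Lemma~\ref{udec}(i) fails). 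So the claim $\rad(B_i)\gtrsim 1$ is correct and needed, but it comes from the admissible-ball convention in the density decomposition, not from $1$-separation alone; you should cite that convention rather than the $1$-separation.
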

\begin{proof}
We use \eqref{covering}.
Let $(y_i,r_i)$ be the center of $B_i^*$. Then, for every
pair $(y,r)$ contained in $B_i^*$,  the support of $c(y,r) \sigma_r*\psi(\cdot-y)$ is contained in the
annulus of width not exceeding $4 \rad(B_i^*)+1$
built on the sphere centered
at $y_i$ of radius $r_i$.
Also, note that
the estimate for the width of the annulus does not exceed the estimate for the
radius
of the sphere it is built upon, so we can  conclude that the volume of
this
annulus is $\lc 2^{k(d-1)} \rad(B_i^*)$.
Consequently the measure  of the support of $G_{u,k}$
does not exceed
$C_d 2^{k(d-1)} \sum_{i=1}^N\rad(B_i^*)$, and hence, by \eqref{summingradii}, it does not exceed $5C_d 2^{k(d-1)} u^{-1}\#\cE_k $.
\end{proof}

We now combine the $L^2$ bound of Lemma \ref{L2bdprop} and the
support bound of
Lemma \ref{supportlemma} to get an $L^p$ bound; for later reference  in
 \S\ref{largeradsect}
this is formally stated as

\begin{lemma}\label{lpbound}
Suppose $d\ge 4$.
Let $G_u$ be as in \eqref{Gudef} where the sets $\cE_{k}(u)$ are
defined using the density decomposition of $\cE_k$.
Then, for $p\le 2$,
$$\|G_u\|_p\lc  u^{-(1/p-1/p_d)}\sqrt{\log(2+u)} \Big(\sum_k 2^{k(d-1)}\#\cE_k\Big)^{1/p}.$$
\end{lemma}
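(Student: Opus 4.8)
The plan is to combine the two estimates we already have for $G_{u,k}$: the $L^2$ bound of Lemma \ref{L2bdprop} and the support bound of Lemma \ref{supportlemma}, and then use H\"older's inequality to pass to $L^p$. Since $\cE_k(u)$ is of density type $(u,2^k)$ by Lemma \ref{udec}(iv), Lemma \ref{L2bdprop} applies to the family $\{\cE_k(u)\}_k$ and gives
\begin{equation*}
\|G_u\|_2^2 \lc u^{\frac{2}{d-1}}\log(2+u)\sum_k 2^{k(d-1)}\#\cE_k(u)
\lc u^{\frac{2}{d-1}}\log(2+u)\sum_k 2^{k(d-1)}\#\cE_k.
\end{equation*}
On the other hand, $G_u=\sum_k G_{u,k}$, and since for fixed $u$ the functions $G_{u,k}$ have pairwise almost disjoint supports in the radial variable (the support of $G_{u,k}$ lies in the annular region of radii comparable to $2^k$, and at a given point only boundedly many values of $k$ contribute), the measure of the support of $G_u$ is at most a constant times $\sum_k |\mathrm{supp}(G_{u,k})|\lc u^{-1}\sum_k 2^{k(d-1)}\#\cE_k$ by Lemma \ref{supportlemma}.

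Write $A=\sum_k 2^{k(d-1)}\#\cE_k$ for brevity. We then have $\|G_u\|_2^2\lc u^{\frac{2}{d-1}}\log(2+u)\,A$ and $|\mathrm{supp}(G_u)|\lc u^{-1}A$. For $p\le 2$, H\"older's inequality on the support set yields
\begin{equation*}
\|G_u\|_p\le |\mathrm{supp}(G_u)|^{\frac1p-\frac12}\,\|G_u\|_2
\lc \big(u^{-1}A\big)^{\frac1p-\frac12}\big(u^{\frac{2}{d-1}}\log(2+u)\,A\big)^{1/2}
= u^{-(\frac1p-\frac12)+\frac{1}{d-1}}\sqrt{\log(2+u)}\;A^{1/p}.
\end{equation*}
It remains to check that the exponent of $u$ equals $-(1/p-1/p_d)$. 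Since $p_d=\frac{2d-2}{d+1}$, we have $\frac{1}{p_d}=\frac{d+1}{2d-2}=\frac12+\frac{1}{d-1}$, so $-(1/p-1/p_d)=-(1/p-1/2)+\frac{1}{d-1}$, which matches exactly. This gives the claimed bound.

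The only genuinely substantive inputs are Lemmas \ref{L2bdprop} and \ref{supportlemma}, which are proved elsewhere; here the work is just bookkeeping. The one point requiring a moment's care is the finite-overlap claim for the supports of the $G_{u,k}$ across different $k$: since $\sigma_r*\psi(\cdot-y)$ is supported in an annulus of bounded width around the sphere of radius $r\sim 2^k$ centered near $y$, and $y$ ranges over a fixed configuration, a given point $x\in\bbR^d$ lies in $\mathrm{supp}(G_{u,k})$ for at most $O(1)$ values of $k$ (those with $2^k\lesssim|x|\lesssim 2^k$), so summing the individual support measures is legitimate up to a dimensional constant. With that observed, the two-estimate interpolation is immediate and the proof is complete. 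I expect no real obstacle in this particular lemma; the difficulty of the paper is concentrated in the $L^2$ estimate of Lemma \ref{L2bdprop} and, later, in assembling the bounds over all dyadic densities $u$ and all scales.
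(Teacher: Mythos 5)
Your proof is correct and is essentially identical to the paper's: combine the $L^2$ bound of Lemma \ref{L2bdprop} with the support bound of Lemma \ref{supportlemma}, apply H\"older on the support, and check the exponent arithmetic with $p_d=\frac{2d-2}{d+1}$.

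One point of criticism, though it does not affect the validity of the conclusion. To control $\meas(\supp G_u)$ you invoke a ``finite overlap'' property of the supports of the $G_{u,k}$ across different $k$, arguing that a fixed $x$ lies in $\supp G_{u,k}$ for only $O(1)$ values of $k$, namely those with $|x|\sim 2^k$. This is false: $\supp F_{y,r}$ is an annulus around the sphere of radius $r$ centered at $y$, so membership of $x$ is governed by $|x-y|\sim r\sim 2^k$, not by $|x|$; since $y$ ranges over an arbitrary $1$-separated configuration $\cY$, a single $x$ can lie in $\supp G_{u,k}$ for many values of $k$. Fortunately, the claim is also unnecessary: $\supp G_u\subset \bigcup_k\supp G_{u,k}$ gives $\meas(\supp G_u)\le\sum_k\meas(\supp G_{u,k})$ with no overlap condition whatsoever, which is exactly what the paper uses and all your computation needs. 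Drop the overlap argument and cite subadditivity of measure instead.

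Aside from that, the bookkeeping and the verification that $-(\tfrac1p-\tfrac12)+\tfrac{1}{d-1}=-(\tfrac1p-\tfrac1{p_d})$ are both right.
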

\begin{proof}
By
Lemma
\ref {L2bdprop},
$\|G_u\|_2^2 \lc  \log(2+u)u^{2/(d-1)} \sum_k 2^{k(d-1)} \,\#\cE_k.$
Combining this with the support bound of  Lemma \ref{supportlemma} we obtain
\begin{align*}
\|G_u\|_p^p &\le
\big(\meas (\supp (G_{u}))\big)^{1-p/2} \|G_u\|_2^p
\\&
\le
\Big(\sum_k\meas (\supp (G_{u,k}))\Big)^{1-p/2} \|G_u\|_2^p,
\end{align*}
which is
$\lc u^{-(1-\frac p2)} \big(\log(2+u)
u^{\frac {2}{d-1}}\big)^{\frac p2}
\sum_k 2^{k(d-1)} \,\#\cE_k.$
We finally note that  $-1+\frac p2 +\frac{p}{d-1}=(\frac{1}{p_d}-\frac1p)p$, and the lemma is proved.
\end{proof}

The proof of Proposition \ref{main} is now complete since for
$p<p_d$, we can sum the bounds
for  $\|G_u\|_p$ over  $u\in \cU $.

\section{Proof of Lemma \ref{L2bdprop}}\label{L2bd}
We are working with sets $\cE_k\subset \cY\times \cR_k$, which have the
property that every ball of radius $\rho\le 2^{k}$ contains
$\lc u\rho$ points in $\cE_k$.
Let $$G_k= \sum_{(y,r)\in \cE_k} c(y,r)   F_{y,r}$$ with
$\|c\|_\infty\le 1$.
Our task is  to estimate the $L^2$ norm of $\sum_k G_k$.
We may break up this sum into ten separate sums,  each with the property
that $k$ ranges over a $10$-separated set  of
natural numbers. We shall assume this separation property in all sums involving
a $k$-summation.

It will be convenient
to avoid scalar products of expressions of  $G_k$ involving $k \lc \log(2+u)$.
Let $N(u)$ be the smallest integer larger than $10 \log_2(2+u)$.
Split the sum as
$\sum_{k\le N(u)} G_k + \sum_{k>N(u)} G_k$  and then
apply the Cauchy-Schwarz inequality.
We thus obtain
\begin{align}\notag
\Big\|\sum_k G_k\Big\|_2^2 &\lc \log(2+u)
\Big[
\sum_{k\le N(u)}\|G_k\|_2^2+\Big\|\sum_{k>N( u)} G_k\Big\|_2^2\Big]
\\
&\lc\log(2+u)
\Big[
\sum_{k}\|G_{k}\|_2^2
+2\sum_{k'> k>N(u)}\bigl|\langle
G_{k'},G_{k}
\rangle\bigr|
\Big]\,.
\label{sumofkscprod}
\end{align}
We begin  with  estimating  the double sum
$\sum_{k'> k>N( u)}\bigl|\langle
G_{k'},G_{k}\rangle\bigr|$. In this sum we have various scalar products
of $F_{y,r}$ with $F_{Y,R}$ where
$r\le R 2^{-5}$. Let us fix the pair $(Y,R)$ and examine the sum of the
absolute values of such scalar products when $(y,r)$ runs over
$\cE_k$ with $2^k<R/4$. The scalar
product $\inn{F_{y,r}}{F_{Y,R}}$ can be different from $0$ only if $y$ lies in the annulus of
width  $2^{k+1}+2$ built upon the
sphere of radius $R$ centered at $Y$. Moreover  $2^k\le r<2^{k+1}$.
The  set of all pairs
$(y,r)\in \cY\times \cR$ satisfying these conditions can be covered by
$\lc R^{d-1}2^{-k(d-1)}$ balls
(in $\bbR^{d+1}$) of radius $2^k$. Each such ball can contain only
$u 2^{k+1}$ pairs $(y,r)\in \cE_k$ by our assumption on $\cE_k$.
For each such $(y,r)$,  the scalar product
$\inn{F_{y,r}}{F_{Y,R}}$
is $O(2^{k(d-1)/2})$ by Lemma \ref{scalarlemma}. Consequently,
for fixed $(Y,R)$,
$$
\sum_{(y,r)\in\cE_k}\bigl|
\langle
F_{y,r},{F_{Y,R}}
\rangle\bigr|
\lc  R^{d-1}2^{-k(d-1)/2}\,u2^{k}\,,
$$
and therefore (as $N(u)=10\log_2(2+u)$)
$$\sum_{k:2^{N(u)}<2^k < R/4} \sum_{(y,r)\in\cE_k}\bigl|
\langle
F_{y,r},{F_{Y,R}}
\rangle\bigr|
\lc  R^{d-1} \sum_{k> N(u)} 2^{-k(d-1)/2}(u2^{k}) \lc R^{d-1};
$$
here we used that $d>3$ and  summed a decaying geometric progression
whose maximal term  corresponds
to $k= N(u)+10$.
Since $(d-1)/2>1$, we see that the geometric decay  cancels the large
factor  $u$ in the last displayed formula.
It remains to sum these estimates over pairs $(Y,R)$ to get the bound
$\sum_{(Y,R)\in \cE} R^{d-1}\lc \sum_k 2^{k(d-1)}\#\cE_k$ for  the sum of scalar products in
\eqref{sumofkscprod}.

Now that we have dealt with the interaction of incomparable radii,
we can concentrate on estimating $\|G_{k}\|_2^2$ for each $k$ separately.
It is convenient to
arrange the  radii  in intervals of length $u^a$ for some
$a>0$
and then apply the estimates of Lemma \ref{scalarlemma} to scalar products arising from different intervals; we shall see later that the choice of
$a=2/(d-1)$ is optimal.

Let
$I_{k,\mu}=[2^k+(\mu-1)u^{a},
2^k+\mu u^{a})$ for $\mu=1,2,\dots$,
 and let
$\cE_{k,\mu}$ be the set of all $(y,r)\in \cY\times I_{k,\mu}$ that
belong to  $\cE_k$.
Set
$$G_{k,\mu}=\sum_{(y,r)\in \cE_{k,\mu}} c(y,r) F_{y,r}.$$ We need to estimate the $L^2$ norm of $\sum_\mu G_{k,\mu}$.
By splitting the $\mu$ sum into ten different sums we may assume
that $\mu$ ranges over a $10$-separated set
and bound
$$
\Big\|\sum_\mu G_{k,\mu}\Big\|_2^2 \lc \sum_\mu \big\|G_{k,\mu}\big\|_2^2+
2 \sum_{\mu'> \mu} \big| \inn{G_{k,\mu'}}{G_{k,\mu}}\big|.
$$
Again, we shall first estimate the sum of the various scalar products, using
the assumption that the sets $\cE_k$ are of density type $(u,2^k)$.
We claim that
\begin{equation} \label{differentmu}
\sum_{\mu'> \mu} \big| \inn{G_{k,\mu'}}{G_{k,\mu}}\big|
\lc u^{1- a\frac{d-3}{2}} 2^{k(d-1)}\#\cE_k.
\end{equation}
To see this, we pick again some pair
$(Y,R) \in \cE_{k,\mu'}$ and examine  how
it interacts with pairs in $\cE_{k, \mu}$ where $\mu\le \mu'-10$.
 Note that if $(y,r)$ is such a pair
for which the scalar product is non-zero, then we must have $|y-Y|\le 2^{k+3}$
 and, since $|r-R|\le 2^{k+1}$,
we conclude that $|(y,r)-(Y,R)|\le 2^{k+4}$ in $\bbR^{d+1}$.
Moreover,
$|r-R| \ge u^{a}$ and thus the sum of the scalar products in which the
pair $(Y,R)$ participates is
$$
\lc 2^{k(d-1)}\sum_{\substack{(y,r)\in \cE_k:\\
u^{a}\le
 |(y,r)-(Y,R)|\le 2^{k+5}}}
|(y,r)-(Y,R)|^{-(d-1)/2}\,.
$$
Now we use the assumption that $\cE_k$ is of density type $(u,2^k)$
(\cf. \eqref{densitytype})
 and estimate the
displayed sum by
$$C_d 2^{k(d-1)}\sum_{2^\ell\ge u^a} (u2^\ell)  2^{-\ell\frac{d-1}2}
\lc
2^{k(d-1)} u^{1-a\frac{d-3}2};
$$
here  we have used  again that $d>3$.
We  sum over all $(Y,R)\in \cE_{k,\mu'}$ and then
 over all $\mu'$. Then the
left hand side of
\eqref{differentmu}
is $\lc u^{1-a \frac{d-3}{2} } 2^{k(d-1)}
\sum_\mu
\#\cE_{k,\mu};$  and \eqref{differentmu} follows.

We now estimate the  $L^2$ norm of each $G_{k,\mu}$.
For each $r\in \cR_{k,\mu}:=I_{k,\mu}\cap\cR$, let
$$G_{k,\mu,r}=\sum_{y:(y,r)\in \cE_k} c(y,r) F_{y,r}.$$
The conclusion of Lemma  \ref{scalarlemma} is now too weak to give satisfactory results; instead we apply the Cauchy-Schwarz inequality with respect to $r$
and use that the cardinality of
$\cR_{k,\mu}$ is $\lc u^{a}$.
Thus
$$\|G_{k,\mu}\|_2^2
\lc u^{a}
 \sum_{r\in \cR_{k,\mu}}
 \|G_{k,\mu,r}\|_2^2.  $$
Now
$G_{k,\mu,r}$ is the convolution of
$\sum_{y: (y,r)\in\cE_{k,\mu}}c(y,r)\psi_\o(\cdot-y)$ with
$\sigma_r*\psi_\o$.
By the standard decay estimate for the Fourier transform of the
surface
measure on the unit sphere, we have $$|\widehat \sigma_r(\xi)|\le
r^{d-1}
(1+r|\xi|)^{-\frac{d-1}2}
$$ and, since $\widehat \psi_\circ$ vanishes to high order at the
 origin,
we also  have, for $r\ge 1$,  \begin{equation}
\label{LinftyFourierbound}
\|\widehat{\sigma}_r\widehat\psi_\o\|_\infty \lc r^{(d-1)/2}.
\end{equation}
Since $\cY$ is
$1$-separated and the support of $\psi$ is contained in a ball of radius
$1/2$, we conclude that
$$
\|G_{k,\mu,r}\|_2^2\lc r^{d-1}\# \{y\in\cY: (y,r)\in \cE_{k,\mu}\}  $$
and thus
\begin{equation*}
\sum_{\mu}
\|G_{k,\mu}\|_2^2
\lc u^{a}
\sum_\mu \sum_{r\in \cR_{k,\mu}}
 \|G_{k,\mu,r}\|_2^2 \lc
u^{a}
2^{k(d-1)}\#\cE_k.
\end{equation*}
Combining this bound with
\eqref{differentmu} yields
$$
\big\|G_k\big\|_2^2 \lc \big(u^{a} +
u^{1-a\frac{d-3}{2}}\big) 2^{k(d-1)}\#\cE_k.
$$
The two terms balance if $a=2/(d-1)$  and with this
 choice  the previous bound becomes
$$
\big\|G_k\big\|_2^2 \lc
u^{\frac{2}{d-1} } 2^{k(d-1)}\#\cE_k.
$$
Finally,  we use this to estimate the first term in
\eqref{sumofkscprod}
and combine the resulting  bound
 with the earlier bound for the mixed terms in
\eqref{sumofkscprod}  to complete the proof of the lemma. \qed

\section{Application to compactly supported multipliers}
\label{compmult}
Now let  $m$ be a radial Fourier multiplier  supported in $\{1/2<|\xi|<2\}$
and let $K=\cF^{-1}[m]$. Since $K$ is radial, we can also write
$K= \kappa(|\cdot|)$ for some  $\kappa$. We shall prove the estimate
\begin{equation} \label{compsupp}\|K* f\|_p\lc \|K\|_p \|f\|_p,
\quad 1\le  p< p_d.
\end{equation}
Let  $\eta_\o$ be a radial Schwartz function whose Fourier transform
is supported in $\{1/4<|\xi|<4\}$ and such that $\widehat \eta_\o(\xi)=1$ on the
support  of $m$. Let $\psi_\o$ be a radial $C^\infty$
 function with compact support in $\{|x|\le 10^{-1}\}$ with the property that
 $\wh\psi_\o$ and all its derivatives up to order $20d$ vanish at the origin
but
$\wh\psi_\o (\xi)>0$
on $\{1/4\le |\xi|\le 4\}$ .
This is easy to achieve (take a radial function $\chi\in C^\infty_0$ such that
$\wh \chi(0) =1$, then define
$\psi_\o =\la^d \Delta^{10d}[ \chi(\la\cdot)]$ for a sufficiently 
large $\la$;  here $\Delta$ denotes the Laplacian in $\bbR^d$).

Let $\eta= \cF^{-1}[\wh
\eta_\o(\widehat \psi_\o)^{-2}]$.
Then
$K* f = \psi_\o*  K *  \psi_\o * g$
where $g=\eta*f$ and clearly $\|g\|_p\lc \|f\|_p$.
We split $K=K_0+K_\infty$ where $K_0=K \chi\ci{\{|x|\le 1\}}$.
Since
$\|K_0\|_1\lc \|K\|_p$
the operator of convolution
with $K_0$ is clearly bounded on all $L^p$, $1\le p\le \infty$,
with operator norm $O(\|K\|_p)$.
Therefore it suffices to show that
the $L^p$ norm of $\psi_\o*  K_\infty *  \psi_\o * g$ is controlled by
$C \|K\|_p \|g\|_p$.
We set $\psi=\psi_\circ*\psi_\circ$ and observe that
\begin{equation} \label{oneinftyintegral}
\psi*K_\infty*g= \int_1^\infty\int_{\bbR^d}\psi*\sigma_r (\cdot\!-y)\,
 \kappa(r)g(y)
dy \,dr\,.
\end{equation}
By Corollary \ref{mainlp},
$$\|\psi*K_\infty*g\|_p \lc\Big(\int|\ka(r)|^p r^{d-1}dr\Big)^{1/p}
\Big(\int|g(y)|^p dy\Big)^{1/p}.$$
This establishes \eqref{compsupp}.


\section{A variant of Corollary  \ref{mainlp} involving  large radii}
\label{largeradsect}

The following estimate for convolution operators with radial kernels will be used in conjunction with atomic decompositions to extend the one scale situation
of \S\ref{compmult} to the general case.
We consider radial kernels with cancellation that are supported in
$\{|x|>2^\ell\}$. The crucial feature is an  exponential gain in $\ell$,
 which will be useful when putting different scales together.
For  $\nu\in \bbZ$, let
$\cW^{\nu}$ be the  tiling of $\bbR^d$ with dyadic cubes of sidelength
 $2^{\nu}$, {\it i.e.},
the set of
 cubes of the form
$$[z_1 2^{\nu}, (z_1+1)2^{\nu}) \times \dots \times [z_d 2^{\nu}, (z_d+1)
2^{\nu}), \quad z=(z_1,\dots,z_d)\in \bbZ^d.$$

\begin{proposition}\label{germat} Let $1<p<p_d$ and
  $\eps<(d-1)(\frac 1p-\frac{1}{p_d})$. Let $\ell\ge 0$.
 Let $K$ be a radial convolution kernel supported in
$\{x:|x|>2^\ell\}$. For $s\in \bbZ$, let $K_s=2^{sd}K(2^s\cdot)$,
$\psi_s=2^{sd}\psi(2^s\cdot)$.
Then
\begin{equation}
\label{germatomic}
\big\|\psi_s* K_s* g\big\|_p
\lc \|K\|_p 2^{-\ell \eps}
\Big(\sum_{W\in \cW^{\ell-s}} \meas(W)\,
\|g\chi\ci W\|_\infty^p\Big)^{1/p}.
\end{equation}
\end{proposition}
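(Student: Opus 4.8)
The plan is to reduce \eqref{germatomic} to the one-scale estimate of Corollary \ref{mainlp} (after rescaling) and exploit the support condition $|x|>2^\ell$ to harvest the gain $2^{-\ell\eps}$. By the dilation $x\mapsto 2^{s}x$ one sees that $\psi_s*K_s*g=2^{sd}(\psi*K*g(2^{-s}\cdot))(2^s\cdot)$ (with $K$ and $g$ rescaled accordingly), so it suffices to prove \eqref{germatomic} in the case $s=0$: that is,
\begin{equation*}
\big\|\psi*K*g\big\|_p\lc \|K\|_p\,2^{-\ell\eps}\Big(\sum_{W\in\cW^{\ell}}\meas(W)\,\|g\chi\ci W\|_\infty^p\Big)^{1/p},
\end{equation*}
for $K$ radial and supported in $\{|x|>2^\ell\}$, and then track that both sides scale the same way under $g\mapsto g(2^s\cdot)$, $K\mapsto K_s$. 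Writing $K=\kappa(|\cdot|)$ one has, as in \eqref{oneinftyintegral}, the representation $\psi*K*g=\int_{2^\ell}^\infty\int_{\bbR^d}\psi*\sigma_r(\cdot-y)\,\kappa(r)\,g(y)\,dy\,dr$, since $\sigma_r$ for $r\le 2^\ell$ does not meet $\supp K$; the crucial point is that the radial integration now runs only over $r>2^\ell$.

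First I would dominate $g$ by a function constant on the cubes $W\in\cW^\ell$: set $a_W=\|g\chi\ci W\|_\infty$ and $g_0=\sum_W a_W\chi\ci W$, so that $|\psi*K*g|\le \psi*|K|*g_0$ up to harmless constants coming from $\psi$ (or, more carefully, split $g=\sum_W g\chi\ci W$ and estimate each piece). Then applying Corollary \ref{mainlp} in the continuous form \eqref{mainlpcont}, with $h(y,r)=\kappa(r)g_0(y)\chi_{[2^\ell,\infty)}(r)$, gives
\begin{equation*}
\big\|\psi*K*g\big\|_p\lc\Big(\int_{2^\ell}^\infty\!\!\int_{\bbR^d}|\kappa(r)|^p|g_0(y)|^p r^{d-1}\,dy\,dr\Big)^{1/p}
=\Big(\int_{2^\ell}^\infty|\kappa(r)|^p r^{d-1}dr\Big)^{1/p}\Big(\sum_{W\in\cW^\ell}\meas(W)\,a_W^p\Big)^{1/p}.
\end{equation*}
This already gives the right spatial factor but without any gain; note $\big(\int_{2^\ell}^\infty|\kappa(r)|^pr^{d-1}dr\big)^{1/p}\lc\|K\|_p$, so the bound is \eqref{germatomic} with $\eps=0$. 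To get the exponential gain one must use the support condition more efficiently: the kernel $\psi*\sigma_r(\cdot-y)$ lives on an annulus of width $O(1)$ built on a sphere of radius $r\ge 2^\ell$, and such an annulus has measure $\asymp r^{d-1}$, which is \emph{large}. The idea is to trade $L^p$-mass against support size exactly as in Lemma \ref{lpbound}: interpolate between an $L^2$ bound (cheap, by orthogonality/Plancherel, losing no power of $r$) and a crude support bound, so that for $p<2$ each fixed-$r$ piece of $\psi*K*g$ contributes a factor $r^{(d-1)(1/2-1/p)}\cdot r^{(d-1)/2}=r^{(d-1)(1-1/p)}$ rather than the naive $r^{d-1}$ — but this gives a \emph{gain} $r^{-(d-1)/p'}$... rather, one reorganizes the double integral in \eqref{mainlpcont} dyadically in $r\in[2^j,2^{j+1})$, $j\ge\ell$, applies the sharp $L^p$ estimate of Proposition \ref{main} (equivalently Corollary \ref{mainlp}) on each dyadic block — which for a single radius-block gives the block-constant $2^{j(d-1)/p}$ exactly matching $\|K\chi_{\{|x|\asymp 2^j\}}\|_p$ — and then observes that the spatial discretization at scale $1$ can be coarsened to scale $2^\ell$ when one only asks for the $\ell^p(L^\infty_W)$ norm on the right. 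The exponential factor $2^{-\ell\eps}$ with any $\eps<(d-1)(1/p-1/p_d)$ then comes precisely from the slack between the exponent $p$ one is using and the critical $p_d$ in Proposition \ref{main}: summing the geometric series $\sum_u u^{-(1/p-1/p_d)}$ over the density-decomposition parameter $u\in\cU$ as in Lemma \ref{lpbound}, but now with the extra constraint that all relevant balls have radius $\gtrsim 2^\ell$ (since $r\ge 2^\ell$ and the spatial cubes have sidelength $2^{\ell-s}$), forces the effective density parameter to satisfy $u\gtrsim 2^\ell$, and the tail $\sum_{u\gtrsim 2^\ell}u^{-(1/p-1/p_d)}\lc 2^{-\ell(1/p-1/p_d)}$ produces (after multiplying by the $(d-1)$ from the measure exponent, or rather by noting the exponent is exactly $(1/p-1/p_d)$ in the relevant normalization) the claimed $2^{-\ell\eps}$.

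Concretely, the steps in order: (1) rescale to $s=0$ and record the scaling of both sides; (2) write $\psi*K*g$ via \eqref{oneinftyintegral} with radial integration over $r>2^\ell$, and discretize: replace $K$ by $\sum_{(y,r)\in\cY\times\cR,\ r>2^\ell}\kappa(r)(\ldots)$ and $g$ by its $\cW^\ell$-averages, reducing to estimating $\sum_{(y,r)}c(y,r)a_{W(y)}F_{y,r}$ with $r\ge 2^\ell$; (3) run the density decomposition $\cE=\bigcup_u\cE(u)$ of Lemma \ref{udec} and the corresponding $L^2$ plus support estimates (Lemmas \ref{L2bdprop}, \ref{supportlemma}, \ref{lpbound}), observing that because $r\ge 2^\ell$ and the spatial resolution is $2^\ell$ only values $u\gtrsim 2^\ell$ occur in the decomposition (a ball of radius $\rho<2^\ell$ in $\bbR^{d+1}$ that meets $\cE$ and respects the $2^\ell$-separation of the relevant lattice contains $O(1)$ points, so the $u=O(1)$ part is handled trivially and the genuine sum starts at $u\asymp 2^\ell$); (4) sum the geometric series in $u$, starting from $u\asymp 2^\ell$, to obtain the factor $2^{-\ell(1/p-1/p_d)}$, and absorb the $\sqrt{\log(2+u)}$ into the loss $\eps<(d-1)(1/p-1/p_d)$ (strict inequality gives room); (5) collect $\big(\sum_j 2^{j(d-1)}\#\cE_j\big)^{1/p}\asymp\big(\sum_{W\in\cW^\ell}\meas(W)a_W^p\big)^{1/p}\cdot(\text{something}\lc\|K\|_p)$ — more precisely, separate the $r$-weight $\big(\int_{2^\ell}^\infty|\kappa|^pr^{d-1}\big)^{1/p}\lc\|K\|_p$ from the $g$-side — to reach \eqref{germatomic}.

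The main obstacle I anticipate is step (3)--(4): making rigorous the claim that the support constraint $|x|>2^\ell$ (equivalently $r>2^\ell$) together with the coarse $2^{\ell-s}$ spatial resolution on the right-hand side genuinely forces $u\gtrsim 2^\ell$ in the density decomposition, so that one is summing only the tail of the geometric series. One has to be careful that the right-hand side of \eqref{germatomic} is an $\ell^p$-over-cubes-of-sidelength-$2^{\ell-s}$ of an $L^\infty$ norm, not an honest $L^p$ norm, and that the density-decomposition machinery of \S\ref{mainsect} was built for unit-scale data; the bookkeeping that translates "$g$ is block-constant at scale $2^{\ell-s}$ and $K$ is supported at distance $2^\ell$" into "the effective density parameter exceeds $2^\ell$" is the crux, and is presumably where the restriction $\eps<(d-1)(1/p-1/p_d)$ rather than $\eps=(d-1)(1/p-1/p_d)$ enters (to leave room for the $\sqrt{\log}$ factor and for constants lost in the coarsening). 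Everything else is a rescaled repetition of the arguments already carried out in Sections \ref{mainsect}--\ref{L2bd}.
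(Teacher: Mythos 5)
Your outline is right up through the rescaling to $s=0$, the spherical-means representation \eqref{oneinftyintegral}, and the decision to discretize and run the density decomposition of \S\ref{mainsect}. You correctly flag steps (3)--(4) as the crux, but the mechanism you propose there is not the one that works, and your own bookkeeping betrays the problem: you arrive at a gain $2^{-\ell(1/p-1/p_d)}$ and then parenthetically wave at the missing factor $(d-1)$ without producing it.

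The claim that the support condition $r\ge 2^\ell$ plus block-constancy of $g$ at scale $2^{\ell}$ "forces $u\gtrsim 2^\ell$" in the density decomposition is simply false. The lattice $\cY$ remains $1$-separated throughout (the building blocks $F_{y,r}=\sigma_r*\psi(\cdot-y)$ live at unit scale and Lemma~\ref{L2bdprop} is calibrated to that), and a set $\cE_k(j)$ built from a block-constant $g$ can perfectly well be sparse, so $u=1$ genuinely occurs. What the paper does instead (proof of Lemma~\ref{largerad}) is keep the full sum over $u\in\cU$ but exploit that $\cE_k(j,u)$ is of density type $(u,2^k)$ with $k\ge\ell$: for each cube $W\in\cW^\ell$ and radius $r$, the slice $\{y\in\cY\cap W:(y,r)\in\cE_k(j,u)\}$ has at most $\min\{O(u\,2^\ell),\,O(2^{\ell d})\}$ points (the first bound from density type, the second from $1$-separation). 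Feeding this into Lemma~\ref{lpbound} and then Lemma~\ref{dyadicinterpol} yields, for each $u$, a contribution proportional to $C(\ell,u)^{1/p}=u^{-\delta}\min\{u2^\ell,2^{\ell d}\}^{1/p}$ with $\delta<1/p-1/p_d$. Summing this over $u\in\cU$ is a geometric series that \emph{peaks at $u\asymp 2^{\ell(d-1)}$}, not $2^\ell$, giving $\sum_u C(\ell,u)^{1/p}\lc 2^{\ell d/p}\,2^{-\ell(d-1)\delta}$; the factor $2^{\ell d/p}$ is exactly absorbed by $\meas(W)^{1/p}$ on the right of \eqref{ellgaindiscr}, and the surplus $2^{-\ell(d-1)\delta}$ is precisely the claimed $2^{-\ell\eps}$ with $\eps=(d-1)\delta$. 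So the $(d-1)$ comes from the crossover point of the two competing slice bounds, not from restricting $u$ to a tail; your proposal misses the $2^{\ell d}$ cap entirely and so cannot produce it. I would also note that the paper first isolates this computation as a standalone estimate (Lemma~\ref{largerad}, the variant of Corollary~\ref{mainlp} for $r\ge 2^\ell$), and Proposition~\ref{germat} is then a one-line application with $h(y,r)=\kappa(r)g(y)$; structuring your write-up the same way would make the final step transparent.
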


The constant implicit in \eqref{germatomic} depends on  $\eps$.

We  prove  a variant of Corollary \ref{mainlp}, which involves
only
radii  $r\ge 2^\ell$ and corresponds to the case $s=0$ of the
proposition.
  Let $F_{y,r}$ be as in \eqref{gyr}.

\begin{lemma}\label{largerad}
Let $1<p<p_d$ and
  $\eps<(d-1)(\frac 1p-\frac{1}{p_d})$. Then, for $\ell\ge 0$,
\begin{multline}
\label{ellgain}
\Big\| \int_{\bbR^d}\int_{2^\ell}^\infty h(y,r) F_{y,r}\,dr\,dy\Big\|_p
\\\lc 2^{-\ell \eps}2^{\ell d/p}
\Big(\int_{r=2^\ell}^\infty \sum_{W\in \cW^\ell}
\sup_{y\in W}
|h(y,r)|^p r^{d-1}\,dr\Big)^{1/p}.
\end{multline}
\end{lemma}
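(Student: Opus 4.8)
The plan is to reduce Lemma \ref{largerad} to Proposition \ref{main} by a scaling-and-discretization argument that keeps track of the extra factor $2^{-\ell\eps}$. First I would split the $r$-integral into the dyadic pieces $r\in[2^k,2^{k+1})$ with $k\ge\ell$, and correspondingly discretize the $y$-integral over unit cubes; using Minkowski's inequality in the translation parameters (as in the proof of Corollary \ref{mainlp}, writing $y=z+w$, $r=n+\tau$) it suffices to bound, for each fixed shift, a sum $\sum_{(y,r)\in\cE}c(y,r)F_{y,r}$ where now $\cE$ is a $1$-separated subset of $\bbR^d\times[2^\ell,\infty)$ and $|c(y,r)|\le 1$, with the coefficients absorbed so that $\#\cE_k$ measures the relevant mass. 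By Proposition \ref{main} this is $\lc\big(\sum_{k\ge\ell}2^{k(d-1)}\#\cE_k\big)^{1/p}$, and since every radius involved satisfies $r\ge 2^\ell$, each factor $r^{d-1}\gtrsim 2^{\ell(d-1)}$; the point of the lemma is that we are allowed to \emph{lose} a bit of this, trading $2^{-\ell\eps}2^{\ell d/p}$ against the $r^{d-1}$ weight on the right-hand side.

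The cleanest way to see the gain is to redo the $G_u$ decomposition from \S\ref{mainsect} restricted to radii $\ge 2^\ell$, and observe that now the support bound of Lemma \ref{supportlemma} improves: each $G_{u,k}$ with $k\ge\ell$ is supported in a union of annuli of inner radius $\asymp 2^k\ge 2^\ell$ and total width $\lc u^{-1}\#\cE_k$, so $\meas(\supp G_{u,k})\lc u^{-1}2^{k(d-1)}\#\cE_k$ exactly as before — but when we interpolate between the $L^2$ bound of Lemma \ref{L2bdprop} and this support bound, I would instead use a \emph{coarser} support bound at scale $2^\ell$: the whole of $G_u=\sum_{k\ge\ell}G_{u,k}$ lives in $\{|x|\gtrsim 2^\ell\}$, which is irrelevant, so the real mechanism is different. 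The correct mechanism is that we only need to prove the estimate with the weight $r^{d-1}$ on the right replaced by the smaller weight $2^{-\ell\eps p+\ell d}$ times $r^{d-1-\text{something}}$; concretely, since $p<p_d$, Proposition \ref{main} gives room to spare, and one re-runs the $u$-summation in Lemma \ref{lpbound} keeping explicit powers of $2^k$: summing $\|G_u\|_p$ over $u\in\cU$ and then over $k\ge\ell$ with the geometric factor $2^{k(d-1)}$ replaced by $2^{k(d-1)}\cdot(2^\ell/2^k)^{\delta}$ for small $\delta>0$ costs only a constant depending on $\delta$, because $k\ge\ell$. Choosing $\delta=(d-1)p(\tfrac1p-\tfrac1{p_d})-\eps p>0$ converts the bound into $2^{-\ell\eps}2^{\ell d/p}$ times $\big(\int_{2^\ell}^\infty\sum_{W\in\cW^\ell}\sup_{y\in W}|h(y,r)|^p r^{d-1}\,dr\big)^{1/p}$, which is exactly \eqref{ellgain}. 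In carrying this out I would note that replacing the $\#\cE_k$-counting at unit scale by the $\cW^\ell$-cube suprema on the right of \eqref{ellgain} is harmless: covering a cube $W$ of sidelength $2^\ell$ by $\asymp 2^{\ell d}$ unit cubes and crudely bounding the contribution of each by the sup over $W$ only helps, since we have exponential room in $\ell$.

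The main obstacle, and the step that needs genuine care rather than bookkeeping, is justifying that we are allowed to pass from the sharp weight $r^{d-1}$ in Corollary \ref{mainlp} to the $\ell$-localized weight on the right of \eqref{ellgain} \emph{without} losing the full gain — i.e., making precise which power of $r$ versus which power of $2^\ell$ we keep when summing the geometric series in $k$. One must be careful that the loss $2^{\ell d/p}$ is not an artifact of over-counting cubes: it appears precisely because on the right-hand side we replace an $\ell^p$-sum over $\asymp 2^{\ell d}$ unit cubes by a single $\sup$ over the $2^\ell$-cube $W$, which is an $\ell^\infty$ bound and hence costs $(2^{\ell d})^{1/p}=2^{\ell d/p}$ in the worst case; this is compensated by, and must be reconciled with, the genuine decay $2^{-\ell\eps}$ coming from the strict inequality $\eps<(d-1)(\tfrac1p-\tfrac1{p_d})$ together with the fact that all radii are $\ge 2^\ell$. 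Once the exponent arithmetic $-1+\tfrac p2+\tfrac p{d-1}=(\tfrac1{p_d}-\tfrac1p)p$ from Lemma \ref{lpbound} is combined with the $k\ge\ell$ constraint, everything closes; the proof of Proposition \ref{germat} itself is then immediate by rescaling $x\mapsto 2^s x$, since both sides of \eqref{germatomic} are homogeneous of the correct degree and the hypothesis $\supp K\subset\{|x|>2^\ell\}$ becomes $\supp K_s\subset\{|x|>2^{\ell-s}\}$, matching the cube scale $\cW^{\ell-s}$.
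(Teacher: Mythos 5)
Your framework is right in outline (discretize as in Corollary~\ref{mainlp}, run the density decomposition of~\S\ref{mainsect}, sum over $u$), and you have correctly located where $2^{\ell d/p}$ comes from (replacing an $\ell^p$ aggregation over $\asymp 2^{\ell d}$ unit cubes by one $\sup$ over $W$). But the step you flag as ``needing genuine care'' --- the source of the actual decay $2^{-\ell\eps}$ --- is precisely the step you don't supply, and the heuristic you offer in its place is not correct. Replacing $2^{k(d-1)}$ by $2^{k(d-1)}(2^\ell/2^k)^\delta$ in the bound furnished by Proposition~\ref{main} is not something that proposition gives you: it is an inequality with a fixed right-hand side, and there is no slack hidden inside it that can be extracted just by invoking $p<p_d$ and $k\ge\ell$. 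If you simply discretize and invoke Proposition~\ref{main} as a black box, the best you can do is bound $\#\cE_k$ by $2^{\ell d}$ times the number of occupied $(W,r)$ slices, which recovers only $2^{\ell d/p}$ and no $2^{-\ell\eps}$.

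What the paper actually does is re-open the $u$-decomposition one level deeper and count points of $\cE_k(j,u)$ inside a single slice $W\times\{r\}$ with $W\in\cW^\ell$ and $r$ fixed. Because $\cE_k(j,u)$ is of density type $(u,2^k)$ and $2^\ell\le 2^k$, such a slice (a ball of diameter $\lc 2^\ell$) contains at most $O(u\,2^\ell)$ points; because $\cY$ is $1$-separated, it also contains at most $O(2^{\ell d})$ points. Feeding the cap $\min\{u\,2^\ell,\,2^{\ell d}\}$ into the $u$-summation of Lemma~\ref{lpbound} --- i.e.\ summing $u^{-\delta}\min\{u\,2^\ell,\,2^{\ell d}\}^{1/p}$ over $u\in\cU$, where the crossover at $u\asymp 2^{\ell(d-1)}$ produces the geometric series --- is exactly what yields $2^{-\ell\delta(d-1)}\,2^{\ell d/p}$. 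This $\min$-cap, balancing the density bound against the separation bound, is the essential combinatorial input, and it is absent from your argument. Without it, the ``exponent arithmetic closes'' claim is circular: you have derived the exponent you want only by assuming a gain that has not been established. The concluding reduction of Proposition~\ref{germat} to Lemma~\ref{largerad} by rescaling is, as you say, routine.
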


\begin{proof}
We shall base the proof on the arguments in \S\ref{mainsect}
 and first prove a discretized version.
Let $\cY$, $\cR$ be  $1$-separated subsets of $\bbR^d$ and
 $[1,\infty)$ respectively.
Inequality \eqref{ellgain} follows from the following discretized version
by the averaging argument employed in the proof of Corollary \ref{mainlp}.
\begin{multline}
\label{ellgaindiscr}
\Big\| \sum_{\substack{(y,r)
\in \cY\times \cR\\ r\ge 2^\ell}}
\ga(y,r) F_{y,r}\Big\|_p
\\\lc 2^{-\ell \eps}2^{\ell d/p}
\Big(\sum_{r\in \cR} \sum_{W\in \cW^\ell}
\sup_{y\in \cY\cap W}
|\ga(y,r)|^p r^{d-1}\Big)^{1/p}.
\end{multline}
For $j\in \bbZ$, $r\in \cR$, let
 $\cW^\ell (j,r)$ be the set of all $W\in \cW^\ell$ for which
$2^j\le \sup_{x\in W}|\gamma(x,r)|< 2^{j+1}$. For each $y\in \cY$, let $W(y)$ be the unique cube in $\cW^\ell$ that
contains $y$, and for each $j\in \bbZ$, let $\cE_k(j)$ be the set of  all
$(y,r) \in \cY\times\cR_k$ with the property that
$W(y)\in \cW^\ell(j,r)$.
Apply  the density decomposition of Lemma \ref{udec}
to the sets $\cE_k(j)$ and write
$\cE_k(j)=\sum_{u\in \cU} \cE_k(j,u)$ as in that lemma.
Lemma \ref{lpbound}
applied to the set $\cup_{k\ge \ell} \,\cE_k(j,u)$
yields
\begin{equation}\label{applicationoflemma}
\Big\|\sum_{\substack{(y,r) \in \\\cup_{k\ge\ell}\cE_k(j,u)}} \gamma(y,r) F_{y,r} \Big\|_p^p
\lc  u^{- \delta p} 2^{jp} \sum_{k\ge \ell}\sum_{(y,r)\in \cE_k(j,u)}r^{d-1},
\end{equation}
for $\delta<\frac 1p-\frac{1}{p_d}$.
Now we use that
$\cE_k(j,u)$
is of density type $(u,2^k)$. Since  $k\ge \ell$, this implies that
for every $u\in \cU$,  every $j$, every $W\in \cW^\ell$, and every
$r\in [2^k,2^{k+1})$, the slice
$\cE_k(j,u,W,r)
:=\{y\in \cY\cap W: (y,r)\in
\cE_k(j,u)
\}$
contains 
$O(u 2^{\ell })$
 points. Also, since
 $\cY$  is  $1$-separated, the cardinality of each slice is $\lc 2^{\ell d}$.
Therefore the  right hand side of
 \eqref{applicationoflemma}
is controlled by
\begin{align*}&2^{jp} u^{-\delta p} \sum_{k\ge \ell} \sum_{r\in \cR_k} r^{d-1}
\sum_{W\in \cW^\ell}
\#\cE_k(j,u,W,r)
\\& \lc 2^{jp} C(\ell,u)
\sum_{k\ge \ell} \sum_{r\in \cR_k}r^{d-1} \#\cW^\ell (j,r),
\end{align*}
with
$
C(\ell,u):=
u^{-\delta p}
\min\{ u 2^{\ell}, \, 2^{\ell d}\}. $
By interpolation (Lemma \ref{dyadicinterpol}),
\begin{align*}&\Big\|\sum_j\sum_{(y,r) \in \cup_{k\ge\ell}\cE_k(j,u)} \gamma(y,r) F_{y,r} \Big\|_p^p
\\&\qquad\lc C(\ell,u)
\sum_j 2^{jp}
\sum_{k\ge \ell} \sum_{r\in \cR_k} r^{d-1} \#\cW^\ell (j,r)
\\&\qquad\lc  C(\ell,u)  \sum_{W\in \cW^\ell}
 \sum_{r\in \cR} r^{d-1}\sup_{y\in W} |\gamma(y,r)|^p.
\end{align*}
We sum  geometric progressions to  get
$\sum_{u\in \cU} C(\ell,u)^{1/p}
\lc 2^{-\ell\delta(d-1)} 2^{\ell d/p}$. Hence, with $\eps=(d-1)\delta$,
\begin{equation*}
\Big\|\sum_j\sum_{\substack{(y,r) \in \\ \cup_{k\ge\ell}\cE_k(j)}} \gamma(y,r) F_{y,r} \Big\|_p^p
\lc 2^{-\ell \eps p}  \sum_{r\in \cR} r^{d-1} \sum_{W\in
  \cW^\ell}\meas(W)
\sup_{y\in W} |\gamma(y,r)|^p.
\end{equation*}
This proves \eqref{ellgaindiscr}.
\end{proof}

\begin{proof}[Proof of Proposition \ref{germat}]
By scaling we may assume $s=0$.
 As in \S\ref{compmult}, we write
$$
\psi*K_\infty*g= \int_{2^\ell}^\infty\int_{\bbR^d}
\psi*\sigma_r (\cdot-y)\, \kappa(r)g(y)
\, dy \,dr.
$$
Apply  Lemma \ref{largerad} with $h(y,r)=\kappa(r)g(y) $
and notice that the right hand side of
\eqref{ellgain} is equal to
 $$2^{-\ell\eps}
\Big(\int_{2^\ell}^\infty| \kappa(r)|^p   r^{d-1} dr
\Big)^{1/p} \, \cdot\,
\Big(\sum_{W\in \cW^\ell} \meas(W) \|g\chi\ci W\|_\infty^p
\Big)^{1/p} \,.$$
\end{proof}

\section{Atomic decompositions and the proof of
Theorem \ref{mainthm}}\label{atomicsect}

The purpose of this chapter  is to prove Theorem \ref{mainthm} for one particular
Schwartz function $\eta$ whose Fourier transform is compactly
supported away from the origin (for the extension to  more general
$\eta$
see \S\ref{concl}).
We follow the presentation in \S\ref{main} and introduce a radial Schwartz function $\eta_\circ$ such that
$\wh\eta_\circ$ is supported in $\{\xi:1/2<|\xi|<2\}$ and satisfies
\begin{equation}\label{one} \sum_{s\in \bbZ} [\wh \eta_\circ(2^{-s}\xi)]^2=1
\end{equation} for all $\xi\neq 0$. Let $\psi_\circ $ be a $C^\infty$ function compactly supported in $\{x:|x|\le 1/10\}$ such that
$\wh\psi_\circ$ does not vanish in
$\{\xi:1/4\le|\xi|\le 4\}$ and  does vanish to
order $10 d$ at the origin. Let $\psi=\psi_\circ*\psi_\circ$ and
\begin{equation}\label{etadefinition}\eta=
 \cF^{-1}[\wh \eta_\circ/ \wh \psi].
\end{equation}
We shall use this particular  $\eta$ in the assumption of our theorem; in other words,   we shall assume that
$\sup_{t>0} \|T_m[ t^{d/p} \eta(t\cdot)]\|_p\le B_p<\infty.$
For $s\in \bbZ$, let
$$H_s= \cF^{-1} [\widehat \eta (\cdot) m (2^s\cdot)].$$
By our assumption,
\begin{equation}\sup_{s\in \bbZ} \|H_s\|_p \le B_p.
\label{Hsbd}
\end{equation}
Now let $K_s= 2^{sd}H_s(2^s\cdot)$, $\psi_s=2^{sd}\psi(2^s\cdot)=
2^{sd}(\psi_\circ*\psi_\circ)(2^s\cdot)$,
 $\eta_s= 2^{sd} \eta (2^s\cdot)$.
By \eqref{one} and our definitions,
we have the  decomposition
$$T_m f = \sum_s \psi_s*\psi_s * K_s * f_s$$
where
\begin{equation}\label{defoffs} f_s =\eta_s*f.
\end{equation}
We may assume that $f$ is a
 Schwartz function whose
Fourier transform is compactly supported away from the origin; this class
is dense in $L^p(\bbR^d)$, $1<p<\infty$. For those functions,  the sum
in $s$ is finite.

We shall work with atomic decompositions constructed from
Peetre's maximal square function
({\it cf.} \cite{peetre}, \cite{triebel}
and \cite{se-stud}) using ideas from work  by Chang and
Fefferman \cite{crf}.  The nontangential version of Peetre's expression is
$$
Sf (x) = \Big(\sum_s \sup_{|y|\le 10 d\cdot 2^{-s}}
|f_s(x+y)|^2\Big)^{1/2}.
$$
Then
the $L^p$ norm of $Sf$ is controlled  by $\|f\|_p$ if $1<p<\infty$,
 and by the Hardy space (quasi-)norm $\|f\|_{H^p}$ if $p\le 1$.
These statements follow, for example, from
the Fefferman-Stein
inequalities for the vector-valued
 Hardy-Littlewood maximal operator (\cite{fs}).

Put  $\Psi_s=\psi_s*\psi_s$. The proof of the $L^p$ boundedness of
$T_m$ reduces to the inequality
\begin{equation}\label{goal1}
\Big\|\sum_s\Psi_s*K_s* f_s\Big\|_p \lc B_p
\|Sf\|_p,\quad 1<p<p_d;
\end{equation}
here we now
assume that the sum in $s$ is over a finite set of integers.
 In what follows, we will make several decompositions of the
Schwartz functions $f_s$ (involving
even rough cutoffs)
and  the a priori convergence of various sums can be justified
by using the rapid decay of the functions.

The cancellation of the functions $\psi_s$ is crucial for the
estimation of the left hand side in \eqref{goal1} and
 various similar expressions. A simple tool is the inequality
\begin{equation}\label{almostorth}
\Big\|\sum_s \psi_s*h_s\Big\|_\tau \le C \Big(\sum_s
\|h_s\|_\tau^\tau\Big)^{1/\tau},
\quad 1\le \tau\le 2,
\end{equation}
with a constant $C$ depending only on $\psi$.
This is immediate from Plancherel's theorem for $\tau=2$,
trivial for $\tau=1$ and true by interpolation for $1<\tau<2$.
Inequality \eqref{almostorth} is not enough to put the estimates for the
various scales together, and in addition we have to use
an ``atomic decomposition''  of each $f_s$, which we now describe.

For fixed $s$, we tile $\bbR^d$ by the dyadic cubes of sidelength $2^{-s}$; and we shall  write $L(Q)=-s$
to indicate that the sidelength of a dyadic cube is $2^{-s}$.
For each integer $j$, we introduce the set
$\Omega_j=\{x: Sf(x)>2^j\}$. Let $\cQ^s_j$ be the set
of all dyadic cubes for which  $L(Q)=-s$  and which have the property that
$|Q\cap \Omega_j|\ge |Q|/2$ but
$|Q\cap \Omega_{j+1}|<|Q|/2$.
We also set
$$ \Omega_j^*= \{ x: M \chi\ci{\Omega_j}(x)>100^{-d}\}$$
where $M$ is the Hardy-Littlewood maximal operator. $\Omega_j^*$ is an open set
containing $\Omega_j$ and $|\Omega_j^*|\lc |\Omega_j|$.
We work with a Whitney decomposition $\cW_j$ of $\Omega^*_j$ into
dyadic cubes $W$. Specifically, $\cW_j$ is the set of all dyadic cubes $W$ such that the $20$-fold dilate of $W$ is contained in $\Omega_j^*$
and $W$ is maximal with respect to this property.
We note that each $Q\in \cQ^s_j$ is contained in a unique $W\in \cW_j$.
This is verified
 by showing that the  $20$-fold dilate $Q^*$ of $Q$
belongs to $\Omega_j^*$.
Indeed,
$|Q^*\cap \Omega_j|/|Q^*|\ge 20^{-d} |Q\cap\Omega_j|/|Q|\ge 40^{-d}$;
hence $Q^*\subset \Omega_j^*$.
We shall also need that the quadruple dilates $W^*$ of $W$, $W\in
\cW_j$,
have bounded overlap (uniformly in $j$).


We now define some building blocks that
are analogous to  the usual atoms; however they are not normalized,
and,  since we are mainly interested in
$L^p$ bounds
for $p>1$,  we do not insist on  cancellation. For each
$W\in \cW_j$,  set
$$A_{s,W,j} = \sum_{\substack{Q\in\cQ^s_j\\Q\subset W}} f_s \chi\ci Q;$$
note that only terms with $L(W)+s\ge 0$ occur.
We also need to consider ``cumulative atoms'', as any dyadic  cube $W$ can be a
Whitney cube for several $\Omega_j^*$. We set
$$A_{s,W}=\sum_{j: W\in \cW_j} A_{s,W,j}.$$
Note that
$$f_s= \sum_{W\in\cup_j\cW_j}
 A_{s,W}= \sum_j \sum_{W\in \cW_j} A_{s,W,j}.$$

The following observations about
atomic
decomposition are standard (see
\eg \cite{crf}), but included here for completeness.
\begin{lemma}
\label{atomsL2bd}
For each $j\in \bbZ$, the following inequalities hold.

(i)
$$
\sum_{W\in \cW_j} \sum_{s}\|A_{s,W,j}\|_2^2 \lc 2^{2j} \meas(\Omega_j).
$$

(ii) There is a constant $C_d$ such that for every  assignment
$W\mapsto s(W)$ defined on  $\cW_j$,
and every
$0\le p\le 2$,
$$
\sum_{W\in \cW_j} \meas(W)\|A_{s(W),W,j}\|_\infty^p \le C_d 2^{pj} \meas(\Omega_j).
$$
\end{lemma}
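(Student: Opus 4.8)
The plan is to exploit the definitions directly, using only (a) the pointwise bound $|f_s(x)|\le Sf(x)$ that follows by taking $y=0$ in Peetre's expression, (b) the stopping-time structure of the cubes $Q\in\cQ^s_j$, and (c) the basic geometry of the Whitney decomposition $\cW_j$.

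\textbf{Part (i).} First I would observe that for each fixed $s$ the cubes $Q\in\cQ^s_j$ are pairwise disjoint (they are distinct dyadic cubes of the common sidelength $2^{-s}$), so that $\sum_s\|A_{s,W,j}\|_2^2=\sum_s\sum_{Q\subset W,\,Q\in\cQ^s_j}\|f_s\chi\ci Q\|_2^2$, and summing over $W\in\cW_j$ just collects all cubes $Q\in\bigcup_s\cQ^s_j$ lying in some Whitney cube. Since every $Q\in\cQ^s_j$ lies in a unique $W\in\cW_j$ (established in the text) and the $W$'s are disjoint, the total is $\le\sum_s\sum_{Q\in\cQ^s_j}\int_Q|f_s|^2$, which by $|f_s|\le Sf$ is at most $\sum_s\sum_{Q\in\cQ^s_j}\int_Q (Sf)^2$. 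Now split each such integral over $Q$ into the part in $\Omega_{j+1}$ and its complement. On $Q\setminus\Omega_{j+1}$ one has $Sf\le 2^{j+1}$, contributing $\lc 2^{2j}|Q|$; summing over the disjoint $Q$'s (all $s$) gives $\lc 2^{2j}|\Omega_j|$ once one checks $\bigcup_{s}\bigcup_{Q\in\cQ^s_j}Q$ has measure $\lc|\Omega_j|$, which holds since $|Q\cap\Omega_j|\ge|Q|/2$ forces $|Q|\le 2|Q\cap\Omega_j|$ and the $Q$'s of a fixed sidelength are disjoint — summing the geometric-in-$s$ bounds works because each $Q$ is \emph{also} contained in $\Omega_j^*$ via the maximal-function argument, so $\sum_{s}\sum_{Q\in\cQ^s_j}|Q|\lc|\Omega_j^*|\lc|\Omega_j|$; actually a cleaner route: each $Q$ satisfies $|Q\cap\Omega_j|\ge|Q|/2$ and $|Q\cap\Omega_{j+1}|<|Q|/2$, hence $|Q\setminus\Omega_{j+1}|>|Q|/2$, so $\sum_{s,Q}|Q|\le 2\sum_{s,Q}|Q\setminus\Omega_{j+1}|$. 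For the part of the integral over $Q\cap\Omega_{j+1}$, bound it crudely by $\int_{Q\cap\Omega_{j+1}}(Sf)^2$ and sum; one still needs $\sum_{s,Q}\int_{Q\cap\Omega_{j+1}}(Sf)^2\lc 2^{2j}|\Omega_j|$. The clean way is to note $\sum_{s}\chi\ci{\bigcup_{Q\in\cQ^s_j}Q}$ is dominated by the number of dyadic scales containing a given point with the stopping property, but in fact a given point $x$ lies in at most \emph{one} cube $Q$ of each sidelength, and if $x\in Q\in\cQ^s_j$ then $x\in\Omega_j^*$; the multiplicity over $s$ can be large, so instead I would replace $(Sf)^2$ by the level-set decomposition $Sf\le 2^{j+1}$ off $\Omega_{j+1}$ and handle the overlap with $\Omega_{j+1}$ by a dyadic summation $\sum_{i\ge j}$ over $\Omega_i\setminus\Omega_{i+1}$, using that a cube $Q$ of $\cQ^s_j$ meeting $\Omega_i\setminus\Omega_{i+1}$ has, for $i$ large relative to $s$ and the cube's scale, controlled measure; this is the standard Calderón–Zygmund bookkeeping and I would cite \cite{crf}. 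The upshot is the claimed $\lc 2^{2j}\meas(\Omega_j)$.

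\textbf{Part (ii).} For the $L^\infty$ bound I would argue as follows. Fix $W\in\cW_j$ and $s=s(W)$. If $L(W)+s<0$ there are no cubes $Q\subset W$ of sidelength $2^{-s}$ and $A_{s,W,j}=0$, so assume $L(W)+s\ge 0$. Then $A_{s,W,j}=f_s$ on $\bigcup_{Q\subset W,\,Q\in\cQ^s_j}Q$ and zero elsewhere, so $\|A_{s,W,j}\|_\infty\le\sup_{Q\subset W,\,Q\in\cQ^s_j}\|f_s\chi\ci Q\|_\infty$. The key point is that any such $Q$ satisfies $|Q\cap\Omega_{j+1}|<|Q|/2$, hence $Q\not\subset\Omega_{j+1}$, so $Q$ contains a point $x_0\notin\Omega_{j+1}$, i.e.\ $Sf(x_0)\le 2^{j+1}$. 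Since $f_s$ is (up to normalization) the convolution of $f$ with a Schwartz function concentrated at scale $2^{-s}$, for \emph{every} $x\in Q$ and the corresponding $x_0$ we have $|x-x_0|\le\diam Q=\sqrt d\,2^{-s}\le 10d\cdot 2^{-s}$, so by the very definition of Peetre's nontangential expression $|f_s(x)|=|f_s(x_0+(x-x_0))|\le\sup_{|y|\le 10d\cdot 2^{-s}}|f_s(x_0+y)|\le Sf(x_0)\le 2^{j+1}$. (This is exactly why the nontangential maximal square function, rather than the plain one, is used.) Therefore $\|A_{s,W,j}\|_\infty\le 2^{j+1}$. Consequently $\sum_{W\in\cW_j}\meas(W)\|A_{s(W),W,j}\|_\infty^p\le 2^{(j+1)p}\sum_{W\in\cW_j}\meas(W)$, and since $\cW_j$ is a Whitney decomposition of $\Omega_j^*$ the cubes are disjoint and $\sum_{W\in\cW_j}\meas(W)=\meas(\Omega_j^*)\lc\meas(\Omega_j)$. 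This gives $\le C_d 2^{pj}\meas(\Omega_j)$.

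\textbf{Main obstacle.} Part (ii) is essentially immediate once one recognizes the role of the nontangential radius $10d\cdot 2^{-s}$, so the only real work is the bookkeeping in part (i): controlling the sum over $s$ of the measures $\sum_{Q\in\cQ^s_j}\int_Q(Sf)^2$, where the stopping cubes at different scales $s$ overlap and where $Sf$ is only known to be $\le 2^{j+1}$ off $\Omega_{j+1}$ but can be arbitrarily large on $\Omega_{j+1}$. The resolution is the classical device: decompose $\int_Q(Sf)^2=\sum_{i\ge j}\int_{Q\cap(\Omega_i\setminus\Omega_{i+1})}(Sf)^2\le\sum_{i\ge j}2^{2(i+1)}|Q\cap\Omega_i|$, and use the Whitney/stopping structure (each $Q\in\cQ^s_j$ sits inside a Whitney cube of $\Omega_j^*$, and $\sum_{Q\in\cQ^s_j,\,Q\subset W}|Q\cap\Omega_i|\le|W\cap\Omega_i|$ with the extra gain coming from $|Q\cap\Omega_{j+1}|<|Q|/2$ giving a geometric factor in $i-j$ once combined with the maximal inequality $|\Omega_i^*\cap W|$ bounds) to sum the geometric series in $i$ and collapse the $s$-sum; I would simply invoke \cite{crf} for this standard estimate. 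With that, both inequalities follow.
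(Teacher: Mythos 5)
Your part (ii) is correct and is the same argument the paper gives: since $|Q\cap\Omega_{j+1}|<|Q|/2$, each $Q$ meets the complement of $\Omega_{j+1}$, and the nontangential radius $10d\cdot 2^{-s}$ dominates $\mathrm{diam}(Q)=\sqrt d\, 2^{-s}$, so $\|f_s\chi\ci Q\|_\infty\le Sf(x_0)\le 2^{j+1}$ for any $x_0\in Q\setminus\Omega_{j+1}$.

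Your part (i), however, has a genuine gap. You correctly diagnose the problem — the cubes $Q\in\cQ^s_j$ overlap heavily across different scales $s$, so the crude bound $\int_Q|f_s|^2\le\int_Q(Sf)^2$ followed by summing over $s$ and $Q$ does not close, and you try to repair it with a level-set decomposition of $(Sf)^2$ and a citation to \cite{crf}. But you never actually carry out that bookkeeping, and it is not clear it works without extra geometric input: for $i\ge j+1$ the stopping condition only gives $|Q\cap\Omega_i|<|Q|/2$, which provides no decay in $i$, and the $s$-multiplicity issue persists inside each term.

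The paper avoids all of this with a short two-step manoeuvre, and the missing piece is one you already used in part (ii). Do not bound $\int_Q|f_s|^2$ by $\int_Q(Sf)^2$. Instead write $\int_Q|f_s|^2\le|Q|\,\|f_s\chi\ci Q\|_\infty^2\le 2\,\meas(Q\setminus\Omega_{j+1})\,\|f_s\chi\ci Q\|_\infty^2$ (using the stopping condition to trade $|Q|$ for a set disjoint from $\Omega_{j+1}$), and then observe that for every $x\in Q\setminus\Omega_{j+1}$ one has $\|f_s\chi\ci Q\|_\infty\le\sup_{|y|\le\sqrt d\, 2^{-s}}|f_s(x+y)|$. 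Since for fixed $s$ the cubes $Q\in\cQ^s_j$ are disjoint and all contained in $\Omega_j^*$, summing over $Q$ and then over $s$ converts the whole expression into
$2\int_{\Omega_j^*\setminus\Omega_{j+1}}\sum_s\sup_{|y|\le\sqrt d\,2^{-s}}|f_s(x+y)|^2\,dx\le 2\cdot 2^{2(j+1)}\meas(\Omega_j^*)$,
because on $\Omega_j^*\setminus\Omega_{j+1}$ the integrand is $\le (Sf(x))^2\le 2^{2(j+1)}$. The sum over $s$ is absorbed exactly by the square-function structure, with no level-set decomposition needed. Conceptually: your part (ii) already contains the linearization of $\|f_s\chi\ci Q\|_\infty$ by a point of $Q\setminus\Omega_{j+1}$; the proof of (i) is that same linearization applied to each $Q$ and summed under the integral, rather than an appeal to the pointwise bound $|f_s|\le Sf$ which throws away the sum-over-$s$ structure you need.
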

\begin{proof} Using
  the definitions of the atoms, part (i) follows from the inequality
$$
\sum_s\sum_{Q\in \cQ^s_j} \|f_s \chi\ci Q\|^2_2 \lc 2^{2j} \meas(\Omega_j).
$$
To see this,  observe that
$\meas(Q\setminus \Omega_{j+1}) \ge\meas(Q)/2$
for each $Q\in \cQ^s_j$,
and we also have
 $Q\subset \Omega_j^*$. We use this together with Fubini's theorem
and see that the left hand side of (i) is bounded by
\begin{align*}
&\sum_s\sum_{Q\in \cQ^s_j} \meas(Q)  \|f_s \chi\ci Q\|^2_\infty \le
\sum_s\sum_{Q\in \cQ^s_j} 2\,\meas(Q\setminus \Omega_{j+1})\,  \|f_s \chi\ci Q\|^2_\infty
\\
&\le2\int_{\Omega_j^*\setminus \Omega_{j+1}}
\Big[ \sum_s \sup_{|y|\le \sqrt d 2^{-s}}
|f_s(x+y)|^2\Big] dx \le 2\cdot 2^{2(j+1)} \meas(\Omega_j^*),
\end{align*}
which is  $\lc  2^{2j} \meas(\Omega_j)$.

Part (ii) of
the lemma follows since
$$
\|A_{s,W,j}\|_\infty \lc \sup_{\substack{Q\in \cQ^s_j\\Q\subset W}}
\big|f_s\chi\ci Q\big| \le \sup_{x\in \Omega_j^*\setminus \Omega_{j+1}} |Sf(x)| \le 2^{j+1}
$$
and
$\sum_{W\in \cW_j}|W|\le |\Omega_j^*|\lc |\Omega_j|.$
\end{proof}

To establish \eqref{goal1} we need to verify the inequality
\begin{equation}\label{all}
\Big\|\sum_{s,j} \sum_{\ell\ge 0}\sum_{\substack{W\in \cW_j\\
L(W)=-s+\ell}}
\Psi_s *K_s
* A_{s,W,j}\Big\|_p \lc B_p \|Sf\|_p.
\end{equation}

For each integer $\ell$ in this sum, we split
the convolution operator $K_s$ into  short range and  long range pieces,
$K_{s,\ell}^\sh$ and $K_{s,\ell}^\lo$. To define them,
we first look at  the
rescaled kernels $H_s$ and set
$ H_{s,\ell}^\sh(x)= H_s(x)$ if $|x|\le 2^{\ell}$ and
$ H_{s,\ell}^\sh(x)= 0$ if $|x|> 2^{\ell}$.
Also
$ H_{s,\ell}^\lo(x)= H_s(x)-H_{s,\ell}^\sh$.
Now set  $K_{s,\ell}^\sh= 2^{sd} H_{s,\ell}^\sh(2^s\cdot)$
and
$K_{s,\ell}^\lo= 2^{sd} H_{s,\ell}^\lo(2^s\cdot)$.
Finally, we split the sum in \eqref{all} into two parts, replacing $K_s$ by
$K_{s,\ell}^\sh$ and $K_{s,\ell}^\lo$, respectively.

Now consider  $W$ with $L(W)= -s+\ell$ and note that
 the short range convolution
$\psi_s *K_{s,\ell}^\sh
* A_{s,W,j}$ is supported in the quadruple  dilate $W^*$ of $W$;
thus for fixed $j$, all these terms are supported in $\Omega_j^*$. We
prove the short range inequality
\begin{equation}
\label{short}
\Big\|\sum_{s,j}
\sum_{\ell\ge 0} \sum_{\substack{W\in \cW_j\\L(W)=-s+\ell}}
\Psi_s *
K_{s,\ell}^\sh
*A_{s,W,j}
\Big\|_\tau \lc B_p \|Sf\|_\tau
\end{equation}
for $p<2d/(d+1)$, $\tau<2$.
The choice
$\tau=p$ is, of course, permitted for the $p$-range of Theorem \ref{mainthm}.
To prove \eqref{short}, it suffices  to show that for fixed $j$, and
for
$\tau\le 2$,
$p<2d/(d+1)$,
\begin{equation}\label{short-j}
\Big\|\sum_s
\sum_{\ell\ge 0}\sum_{\substack{W\in \cW_j\\
L(W)=-s+\ell}}
\Psi_s *
K_{s,\ell}^\sh* A_{s,W,j}
\Big\|_\tau^\tau\lc B_p^\tau
2^{j\tau}\meas(\Omega_j).
\end{equation}
Indeed, by Lemma \ref{dyadicinterpol}, inequality \eqref{short-j}
implies that the left hand side of \eqref{short} is controlled  for
$\tau<2$ by
$B_p^\tau \sum_j
2^{j\tau}\meas(\Omega_j) \lc  B_p^\tau\|Sf \|_\tau^\tau.$

Inequality \eqref{short-j} for $\tau<2$  follows from
 \eqref{short-j} for $\tau=2$  by H\"older's inequality. Here we use that
 the relevant expressions are supported in $\Omega_j^*$ and
 $|\Omega_j^*|\lc |\Omega_j|$.
To prove
\eqref{short-j} for $\tau=2$, we use a standard estimate
for  the Fourier transform of radial kernels
$K=\int_0^\infty \kappa(r)\sigma_r dr$, namely,
\begin{equation}\label{FTbound}
\|\wh K\wh\psi
\|_\infty \le C_p \|K\|_p = c\Big(\int_0^\infty |\kappa (r)|^p r^{d-1}
dr\Big)^{1/p},
 \qquad p<\frac{2d}{d+1}.
\end{equation}
Indeed using Bessel functions as in the proof of Lemma \ref{scalarlemma},
one can use H\"older's inequality to estimate
\begin{align*}
|\wh K(\xi)|&= c'\int_0^\infty \kappa(r) r^{d-1} B_d(r|\xi|) dr
\\
&\lc
\Big(\int_0^\infty |\kappa (r)|^p r^{d-1}
dr\Big)^{1/p} \Big(\int_0^\infty
r^{d-1}(1+r|\xi|)^{-\frac{d-1}{2}p'} dr\Big)^{1/p'}.
\end{align*}
It is easy to see that the last $L^{p'}$ norm is $O(|\xi|^{-d/p'})$,
 provided that $p<2d/(d+1)$. The  bound \eqref{FTbound}
follows since $\wh\psi$ is a Schwartz function that vanishes to high order at $0$.

We return to  \eqref{short-j} for $\tau=2$. As
$\Psi_s*K_{s,\ell}^\sh* A_{s,W,j}$ is supported in
$W^*$ and the $W^*$ have bounded overlap, we can
 dominate the left hand side of the inequality
by
\begin{equation}
\label{auxilL2est}
\begin{aligned}&\Big\|\sum_{W\in \cW_j}
\sum_s\psi_s*\psi_s*
K_{s,L(W)+s}^\sh* A_{s,W,j}
\Big\|_2^2
\\&\lc\sum_{W\in \cW_j}\Big\|\sum_s\psi_s*\psi_s*
K_{s,L(W)+s}^\sh* A_{s,W,j}
\Big\|_2^2
\\&\lc
\sum_{W\in \cW_j}\sum_s\big\|\psi_s*
K_{s,L(W)+s}^\sh* A_{s,W,j}
\big\|_2^2
\\
&\lc
 \sup_{s,\nu}\|\wh{\psi_s}
\widehat{ K_{s,\nu}^\sh}\|_\infty^2
\sum_{W\in\cW_j}
\sum_s \|A_{s,W,j}\|_2^2.
\end{aligned}
\end{equation}
Here we used the $L^2$ case of \eqref{almostorth}.
Now, by \eqref{FTbound}, the Fourier transform of
$\psi_s* K_{s,\nu}^{\sh}$ has $L^\infty$ norm
$\lc \|H_{s,\nu}^{\sh}\|_p\lc \|H_s\|_p\le B_p$. Thus, by Lemma
\ref{atomsL2bd}, (i),
 the last displayed quantity is $\lc B_p^2 2^{2j}|\Omega_j|$.
This finishes  the proof of \eqref{short-j}.

We now turn to the long range estimate, that is,
\begin{equation}\label{long}
\Big\|\sum_{s,j}
\sum_{\ell\ge 0}
\sum_{\substack{W\in \cW_j\\
L(W)=-s+\ell}}
\psi_s* \psi_s* K_{s,\ell}^\lo*
A_{s,W,j}\Big\|_p \lc  B_p \|Sf\|_p.
\end{equation}
We use the $j$-sum to combine the atoms into the cumulative atoms
$A_{s,W}$, take out the $\ell$-sum by Minkowski's inequality, and
use \eqref{almostorth}.
Thus the left hand side of \eqref{long}  is dominated by a
constant times
\begin{equation}\label{longterm}\sum_{\ell\ge 0} \Big(\sum_s \Big\|
\psi_s* K^\lo_{s,\ell} *
\sum_{W:
L(W)=-s+\ell} A_{s,W}\Big\|_p^p\Big)^{1/p}.
\end{equation}
 Now $\|H_{s,\ell}^\lo\|_p\le \|H_s\|_p\le B_p$
and therefore
Proposition
\ref{germat} implies that, for  fixed $\ell$,
\begin{multline}\label{longtermell}
\Big\|
\psi_s* K^\lo_{s,\ell} *
\sum_{W:
L(W)=-s+\ell}
A_{s,W}\Big\|_p\\
\lc
2^{-\ell\eps} B_p
\Big(\sum_{W: L(W)=-s+\ell} \meas(W) \,\|A_{s,W}\|_{\infty}^p\Big)^{1/p}
\end{multline}
for $p<p_d$, with some $\eps=\eps(p)>0$.
Note that for fixed $s,W$,
 the functions $ A_{s,W,j}$ live on disjoint sets (since the dyadic
 cubes of sidelength  $2^{-s}$ are disjoint and each is in exactly one
 family $\cQ_j^s$).
Thus, clearly,
$$\|A_{s,W}\|^p_\infty \lc\sum_j\|A_{s,W,j}\|^p_\infty.$$ It follows
  that the expression
\eqref{longterm} is
\begin{align*}
&\lc B_p\sum_\ell 2^{-\ell\eps}
\Big(\sum_j\sum_{W\in \cW_j}  \meas(W)
\|A_{\ell-L(W),W,j}\|_{L^\infty(W)}^p\Big)^{1/p}
\\
&\lc B_p\sum_\ell 2^{-\ell\eps}
\Big(\sum_j \meas(\Omega_j)\,2^{jp}\Big)^{1/p} \lc B_p
\|S f\|_p
\end{align*}
by part (ii) of
Lemma \ref{atomsL2bd}.
This yields  \eqref{long}. Finally, \eqref{all} follows from
\eqref{short} and \eqref{long}. This concludes the proof of the $L^p$
boundedness of $T_m$
 under the assumption
\eqref{Hsbd}.
\qed

\section{Conclusion of the proof} \label{concl}
We still have to prove \eqref{equiv} for an arbitrary choice of
$\eta$. To
this end, we
fix the radial multiplier $m$ and consider the family $\Theta$ of all
$C^\infty$
functions $\vphi$ compactly supported away from the origin such that
the condition
\begin{equation}\label{normalcond} \big\|\cF^{-1}[\vphi m(t\cdot)]\big\|_p<\infty
\end{equation} holds.
Note that if $\vphi\in\Theta$, then
$\vphi(\lambda\cdot)\in\Theta$ for every $\lambda>0$, moreover
 $\vphi\circ R \in\Theta$ for every rotation $R$ of $\bbR^d$ (here we use
the fact
that $m$ is radial). Also if $\chi$ is
any compactly supported $C^\infty$
function,  then  $\chi\vphi\in\Theta$, simply because
 $\chi$ is an $\cF L^p$ multiplier. Finally  if
$\vphi_1,\vphi_2\in\Theta$, 
then
 $\vphi_1+\vphi_2\in\Theta$.

Now assume that there exists at least one not identically zero function
$\vphi_\circ\in\Theta$.
Let $V$ be a non-empty open subset of $\bbR^{d+1}$ such that
$|\vphi_\circ|>0$ on $V$. Let $\vphi$
be any other $C^\infty$ function compactly supported away from the origin.
For every $\xi\in\bbR^d\setminus\{0\}$, one can find a rotation $R_\xi$ and a number $\lambda_\xi>0$
such that $\lambda_\xi R_\xi \xi\in V$ or, equivalently, $\xi\in \lambda_\xi^{-1}R_\xi^{-1}V$. Then the open sets
$
\lambda_\xi^{-1}R_\xi^{-1}V
$,
$\xi\in\supp\,\vphi$, form a  cover of $\supp\,\vphi$.
Choose a finite subcover
$
\lambda_{\xi_j}^{-1}R_{\xi_j}^{-1}V
$,
$j=1,\dots, n$, and put
$$
\zeta=\sum_{j=1}^n\overline{\vphi_\circ(\lambda_{\xi_j}R_{\xi_j}\cdot)}
\vphi_\circ(\lambda_{\xi_j}R_{\xi_j}\cdot)\,.
$$
Note that $\zeta\in\Theta$ and $\zeta>0$ on
$
\bigcup_{j=1}^n\lambda_{\xi_j}^{-1}R_{\xi_j}^{-1}V\supset\supp\,\vphi\,.
$
Hence, the function $\chi$ defined as $\vphi/\zeta$ on
$\supp\,\vphi$
and $0$ on
$\bbR^d\setminus\supp\,\vphi$
is a $C^\infty$ function with compact support, so
$\vphi=\chi\zeta\in\Theta$.

\begin{proof}[Proof of Theorem \ref{mainthm}, concluded]
Let $g$ be an arbitrary Schwartz function, then the
condition
$\sup_{t>0} \|T_m[ t^{d/p} g(t\cdot)]\|_p<\infty$ is clearly necessary for
$L^p$ boundedness. Conversely, suppose that this condition is satisfied;
it is equivalent to
$\sup_{t>0} \|\cF^{-1}[m(t\cdot) \widehat g]\|_p<\infty$.
We may pick $\chi\in C^\infty$ with compact support in $\bbR^d\setminus \{0\}$ so that
$\chi \widehat g $ is not identically $0$. Since $\chi$ is a
Fourier multiplier, we  see that  $\chi  \widehat  g \in \Theta$. By the
above considerations we also have $\wh \eta \in \Theta$
where $\eta $ is as
in \eqref{etadefinition}. But for this $\eta$, the characterization is
 already proved and the $L^p$ boundedness of $T_m$ follows. \end{proof}

\section{Variants and extensions}\label{extvar}

\noi{\bf Hardy space estimates.}
We now give an extension of Theorem \ref{mainthm} to
the range $p\le 1$. We prove,
in  dimensions $d\ge 2$,  a full characterization  of the
convolution operators with radial kernels mapping the Hardy space
$H^p$ to $L^p$.

\begin{theorem}
\label{hardythm}
Suppose  $d\ge 2$ and $0<p\le 1$.
Let  $m$ be  radial and let
$\eta$ be a Schwartz function whose Fourier transform is compactly
supported  away from the origin and is not identically $0$.
Then
$$
\big\|T_m\big\|\ci{H^p\to L^p}\,\asymp \,\sup_{t>0}\,t^{d/p}\big
\|T_m[\eta(t\cdot)]\big\|\ci{L^p}\,.
$$
\end{theorem}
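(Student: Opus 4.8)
The plan is to transcribe the argument of \S\ref{concl} and \S\ref{atomicsect} into the Hardy space setting, using $\|f\|_{H^p}$ in place of $\|Sf\|_p$ (legitimate since $\|Sf\|_p\lc\|f\|_{H^p}$ for $p\le1$) and the subadditivity of $\|\cdot\|_p^p$ in place of the almost orthogonality \eqref{almostorth}, which is only stated for exponents $\ge1$. Necessity is immediate: since $\widehat\eta$ is supported away from the origin, $\eta$ has vanishing moments of all orders, so $\eta\in H^p$ for every $p>0$, and $t^{d/p}\|T_m[\eta(t\cdot)]\|_p=\|\cF^{-1}[m(t\cdot)\widehat\eta]\|_p\le\|T_m\|\ci{H^p\to L^p}\|\eta\|_{H^p}$. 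For the converse, the reduction of \S\ref{concl} to the particular $\eta$ of \eqref{etadefinition} persists for $p\le1$: the class $\Theta$ of \eqref{normalcond} is clearly stable under dilations, under rotations of $\bbR^d$, and under addition, and the only point to recheck is stability under multiplication by a $C^\infty_0$ function. That survives because each $\cF^{-1}[\vphi m(t\cdot)]$ is band-limited (supported in $\supp\vphi$ in frequency), so multiplying its Fourier transform by $\chi\in C^\infty_0$ changes the $L^p$ quasi-norm by at most a fixed constant, by Bernstein's inequality together with the attendant reverse-H\"older estimate for band-limited functions. The same reasoning applied to $\cF^{-1}[m\widehat\eta]$ gives $\|m\|_\infty\lc B_p$, so $T_m$ is a priori bounded on $L^2$; it then remains to prove \eqref{goal1} with $\|f\|_{H^p}$ in place of $\|Sf\|_p$.

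For $d\ge4$ this is essentially a verbatim transcription of \S\ref{atomicsect}. One keeps the Peetre square function, the sets $\Omega_j$, their Whitney decompositions, the (non-cancellative) building blocks $A_{s,W,j}$, the decomposition $T_mf=\sum_s\Psi_s*K_s*f_s$ and the splitting $K_s=K^{\sh}_{s,\ell}+K^{\lo}_{s,\ell}$. The short-range estimate \eqref{short}--\eqref{short-j} carries over with no change, as it only uses \eqref{FTbound} (valid for $p<2d/(d+1)$, hence for every $p\le1$ when $d\ge2$), the $L^2$ instance of \eqref{almostorth}, the bounded overlap of the dilated Whitney cubes, H\"older's inequality from $L^2$ to $L^\tau$ on those supports, Lemma \ref{atomsL2bd}, and Lemma \ref{dyadicinterpol} together with $\sum_j2^{j\tau}\meas(\Omega_j)\lc\|Sf\|_\tau^\tau\lc\|f\|_{H^p}^p$. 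In the long-range estimate \eqref{long}, the sums over $s$, over $\ell$, over $j$ and over $W$ are now assembled by pure subadditivity $\|\sum g\|_p^p\le\sum\|g\|_p^p$, so the only per-scale input is the (evident $\psi\rightsquigarrow\psi*\psi$ variant of) Proposition \ref{germat}, whose exponential gain $2^{-\ell\varepsilon}$ with $\varepsilon<(d-1)(\tfrac1p-\tfrac1{p_d})$ is still available because $p_d>1$; summing the geometric series in $\ell$ against the $\ell$-uniform bound furnished by part (ii) of Lemma \ref{atomsL2bd} closes the argument exactly as in \eqref{longterm}--\eqref{longtermell}.

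The genuinely new content, and the main obstacle, is the range of low dimensions $d\in\{2,3\}$, where Theorem \ref{mainthm} does not apply and where both the proof of Proposition \ref{germat} (it runs through Lemma \ref{lpbound}, which requires $d\ge4$) and its statement (the exponent $(d-1)(\tfrac1p-\tfrac1{p_d})$ is $\le0$ once $p\le1$) break down; the decay in $\ell$ needed to sum the long-range series must then be produced from cancellation rather than from $L^2$ orthogonality. For the subrange $p<1-1/d$ the crude support bound for $\psi_s*K^{\lo}_{s,\ell}*A_{s,W,j}$ already gives a favourable power of $2^{-\ell}$ (this is where $1-1/d<p_d$ but $1-1/d$ may be $<1$), and one is done; for the remaining range $p\in[1-1/d,1]$ one must exploit cancellation. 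The plan there is to replace the blocks $A_{s,W,j}$ by genuine $H^p$-building blocks carrying $N_p:=\floor{d(1/p-1)}$ vanishing moments (a routine Chang--Fefferman-type modification of \S\ref{atomicsect}, obtained by subtracting from $f_s\chi\ci Q$ the matching low-degree polynomial on $Q$ and reabsorbing the corrections into blocks at coarser scales, which does not affect Lemma \ref{atomsL2bd}), to split $K^{\lo}_{s,\ell}$ into dyadic spherical shells supported in $\{|x|\asymp2^{\ell'-s}\}$ with $\ell'>\ell$, and to prove for each shell a cancellative estimate exploiting the curvature of the sphere: against the smoothed radial kernel $\psi_s*K$, whose tangential variation occurs at the large scale $\asymp2^{(\ell'-s)/2}$ rather than at scale $\asymp2^{-s}$, the $N_p+1$ moments of a block of size $2^{\ell-s}$ gain a factor $\asymp2^{(N_p+1)(\ell-\ell'/2)}$ in the tangential directions; this, combined with the high-order vanishing of $\widehat\psi$ at the origin, with the additional near-cancellation coming from the fact that $A_{s,W,j}$ is the restriction of a frequency-$2^s$-localized function to a cube of sidelength $2^{\ell-s}$ (so its low-frequency content is $2^{-\ell}$-small), and with the one-scale inequality of Corollary \ref{mainlp} (valid for $p\le1$ in every dimension by $\|\cdot\|_p^p$-subadditivity), has to be organized into the required uniform gain in $\ell$. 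Making this robust over the whole range $0<p\le1$, in particular for $p$ close to $1$ where one is left with only the mean-zero condition, is where the real work lies; the rest of the bookkeeping follows \S\ref{atomicsect} once assumption \eqref{Hsbd} has been put in place.
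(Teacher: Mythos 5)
There is a genuine gap, and it lies exactly where you flag it: the low dimensions $d\in\{2,3\}$. For $d\ge4$ your transcription via Proposition \ref{germat} is plausible (modulo the fact that the proposition is stated for $1<p<p_d$ and would need its proof rerun for $p\le1$), but for $d\in\{2,3\}$ your sketch is speculative and not carried out — you propose moment-bearing Chang--Fefferman atoms and a curvature-cancellation estimate against spherical shells, and you explicitly concede that closing this for $p$ near $1$ is open work. That is not a proof. What you have missed is that the paper's long-range estimate for $p\le1$ does not run through Lemma \ref{lpbound}, Lemma \ref{scalarlemma}, or Proposition \ref{germat} at all, precisely because once $p\le1$ the $\|\cdot\|_p^p$ quasi-triangle inequality lets one split $\int_{r>2^\ell}\kappa(r)\sigma_r*\psi_0*A_{0,W,j}\,dr$ into unit annular pieces $\int_n^{n+1}$, $n\ge2^\ell$, and treat each one by the crude support-times-$L^\infty$-multiplier argument: the piece is supported in an annulus of measure $\lc n^{d-1}2^\ell$, its Fourier multiplier is $\lc\ka_n^*n^{(d-1)/2}$ by \eqref{LinftyFourierbound}, and H\"older from $L^2$ to $L^p$ together with $\|A_{0,W,j}\|_2\lc2^{\ell d/2}\|A_{0,W,j}\|_\infty$ give the per-piece bound $\ka_n^*n^{(d-1)/p}2^{-\ell(d-1)(\frac1p-\frac12)}|W|^{1/p}\|A_{0,W,j}\|_\infty$. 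The exponential gain $\eps(p)=(d-1)(\frac1p-\frac12)$ is positive for every $p<2$ as soon as $d\ge2$ — this is what Remark (ii) after the theorem is pointing at when it says the orthogonality Lemma \ref{scalarlemma} plays no role and is replaced by the $L^\infty$ bound \eqref{LinftyFourierbound}, and that this is what allows dimensions two and three.

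Two further points. First, your claim that the blocks $A_{s,W,j}$ must be upgraded to genuine $H^p$-atoms with $\floor{d(1/p-1)}$ vanishing moments is unnecessary: the paper's non-cancellative blocks suffice, because the required gain in $\ell$ comes from the support/multiplier estimate above, not from moment cancellation of the atoms. Second, you are mistaken that the exponent $(d-1)(\frac1p-\frac1{p_d})$ from Proposition \ref{germat} becomes $\le0$ for $p\le1$ in general — that only happens when $p_d\le1$, i.e.\ $d\le3$, which is consistent with your observation that \S\ref{atomicsect}'s machinery fails there; but the correct remedy is the simpler annular H\"older argument, not a new curvature-cancellation scheme.
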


\noi {\it Remarks. (i)}
The $H^p\to L^p$ boundedness
is equivalent to $H^p\to H^p$ boundedness,
by Theorem 3.4 in \cite{miy2}.

\noi{\it (ii)}   The proof is substantially simpler than the $L^p$ result
for $p>1$; in particular,  the crucial
 orthogonality Lemma \ref{scalarlemma} plays no
role, and is replaced by the $L^\infty$ multiplier  bound
\eqref{LinftyFourierbound}.
This allows to include
dimensions two and three.



\begin{proof}[Sketch of proof of Theorem \ref{hardythm}]
We first note  the chain of inequalities
\begin{equation*}
\|\widehat H\|_\infty \le \|H\|_1\le
\sum_{z\in \bbZ^d} \sup_{y\in [0,1]^d} |H(z+y)|
\le
\Big(\sum_{z\in \bbZ^d} \sup_{y\in [0,1]^d} |H(z+y)|^p\Big)^{1/p}
\end{equation*}
since $p\le 1$. Now note that if $\widehat H$ is supported in
 $\{|\xi|\le 2\}$,
 then the last expression is $O(\|K\|_p)$,
by a Plancherel-P\'olya type estimate
({\it cf.} \cite{triebel}, \S 1.3.3).

 Now the proof of the short range estimate \eqref{short} for $\tau\le 1$
is rather similar to the argument in \S\ref{atomicsect}.
Note that $\Psi_s *
K_{s,\ell}^\sh
*A_{s,W,j}$ is supported in $W^*\subset \Omega_j^*$. Thus we can bound the left
hand side of \eqref{short}, for $\tau \le 1$,  by
\begin{align*}
\Big (\sum_j |\Omega_j|^{1-\tau/ 2}
\Big\| \sum_{\substack{W\in \cW_j}}\sum_s
\sum_\ell \Psi_s *
K_{s,\ell}^\sh *A_{s,W,j}
\Big\|_2^{\tau}\Big)^{1/\tau}.
\end{align*}
By \eqref{auxilL2est}
and Lemma  \ref{atomsL2bd}(i), this is dominated by
$$\sup_{s,\nu}\|H^{\text sh}_{s,\nu}\|_p
\Big (\sum_j |\Omega_j|^{1-\tau/ 2} (2^{2j}|\Omega_j|)^{\tau/2} \Big)^{1/\tau}
$$
which is $\lc \sup_s\|H_s\|_p \|Sf\|_\tau$. Of course, we may choose 
$\tau=p$.

We prove  the analogue of the  long range estimate \eqref{long}.
As $p\le 1$, we can apply the triangle inequality for the $p$-th power
of the $L^p$-(quasi)-norm for the sums in $s$, $\ell$, $j$ and $W$.
After rescaling to the case $s=0$,
matters are reduced to the estimation of the convolution with a radial
kernel $\int_{r\ge 2^\ell} \kappa(r) \sigma_r*\psi_0\, dr$  where
$\kappa(|\cdot|)$ is the Fourier transform of a
function supported in $\{1/2<|\xi|< 2\}$. The relevant estimate is
then
\begin{multline}\label{keyestimatecont}
\Big\|\int_{2^\ell}^\infty\kappa(r) \sigma_r* \psi_0*A_{0,W,j}\, dr\Big\|_p
\\
\lc 2^{-\ell \eps(p)}
\Big(\int_{2^\ell}^\infty|\kappa (r)|^p r^{d-1} dr\Big)^{1/p}
 |W|^{1/p}
 \|A_{0,W,j}\|_\infty
\end{multline}
where 
$|W|=2^{\ell d}$.
Now let $\ka_n^*= \sup_{n\le r\le n+1} |\kappa(r)|.$
We shall establish
\begin{multline}\label{keyestimate}
\Big\|\int_{r>2^\ell}\kappa(r) \sigma_r* \psi_0*A_{0,W,j} \Big\|_p\\
\lc 2^{-\ell \eps(p)}
(\sum_{n\ge 1} |\ka^*_n|^p n^{d-1})^{1/p}
 |W|^{1/p}
 \|A_{0,W,j}\|_\infty
\end{multline}
and \eqref{keyestimatecont} will  follow  by the
Plancherel-P\'olya type estimate
$$(\sum_{n\ge 1} |\ka^*_n|^p n^{d-1})^{1/p} \lc_p
\Big(\int|\kappa (r)|^p r^{d-1} dr\Big)^{1/p}.
$$

We now prove \eqref{keyestimate},  with $\eps(p)= (d-1)(\frac1p-\frac 12)$.
Since $p\le 1$, the left hand side is dominated by
$$\Big(
\sum_{n\ge 2^\ell}
\Big\|\int_{n}^{n+1} \kappa(r)
\sigma_r* \psi_0*A_{0,W,j}\,dr \Big\|_p^p\Big)^{1/p}.
$$
As $n\ge 2^\ell$, the term $ \sigma_r* \psi_0*A_{s,W,j}$, for $n\le
r\le n+1$,  is supported
in an annulus with width $c2^\ell$ and inner and outer radii
comparable to $n$, hence of measure $\lc n^{d-1}2^\ell$. By
\eqref{LinftyFourierbound},
$$\sup_\xi\Big|\int_{n}^{n+1}\kappa(r)
\widehat\sigma_r(\xi)\widehat \psi_0(\xi)
 \,dr \Big|\lc
|\ka^*_n| n^{(d-1)/2}.$$
We use  H\"older's inequality and estimate
$\| \int_{n}^{n+1}\kappa(r)\sigma_r* \psi_0*A_{0,W,j}\, dr \|_p $ by
\begin{align*}
&(n^{d-1} 2^\ell)^{\frac 1p-\frac12}
\Big\| \int_{n}^{n+1} \kappa(r)\sigma_r* \psi_0*A_{0,W,j} \,dr\Big\|_2
\\&\lc
\ka^*_n n^{\frac{d-1}{p}} 2^{\ell(\frac1p-\frac12)}
\|A_{0,W,j} \|_2.
\end{align*}
But
$\|A_{0,W,j} \|_2
\lc 2^{\ell d/2} \|A_{0,W,j} \|_\infty
$,
and therefore the last displayed expression is controlled by
\begin{align*}&\ka^*_n n^{\frac{d-1}{p}}
2^{\ell(\frac{d-1}{2}+\frac1p)}  \|A_{0,W,j} \|_\infty
\\
&\lc \ka^*_n n^{\frac{d-1}{p}}
2^{-\ell(d-1)(\frac 1p-\frac12)}|W|^{\frac 1p}
  \|A_{0,W,j} \|_\infty.
\end{align*}
Finally, we remark that the arguments in \S\ref{concl} carry over to
the $H^p$ case, $p\le 1$.
\end{proof}

\medskip

\noi{\bf Lorentz space estimates.}
Weak type $(p,p)$ ({\it i.e.}, $L^p\to L^{p,\infty}$) estimates
 for convolutions with radial kernels, in
particular for Bochner-Riesz means,
have been considered in  \cite{taowt} and the references therein.
We shall indicate here how to prove $L^p\to L^{p,\nu}$ estimates
by combining our previous arguments with interpolation by the real method
(the general Marcinkiewicz theorem).
We will use the  following simple fact  about Lorentz spaces.

\begin{lemma}\label{fubinilemma}
Let $(\cX_1,\mu_1)$, $(\cX_2,\mu_2)$ be $\sigma$-finite
measure spaces,
and let
$\mu=\mu_1\times\mu_2$ be
 the product measure on $\cX_1\times\cX_2$. Then, for $1\le
p<\infty$,  $p\le \nu\le \infty$, and any $\mu$-measurable function  $G$,
\begin{equation}\label{fubinilor}
\|G\|_{L^{p,\nu}(\cX_1\times\cX_2,\mu)}\le C_{p,\nu}
\Big(\int \|G(x_1,\cdot)\|_{L^{p,\nu}(\cX_2,\mu_2)}^p d\mu_1\Big)^{1/p}.
\end{equation}
\end{lemma}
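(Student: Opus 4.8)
The plan is to deduce \eqref{fubinilor} from the description of the Lorentz quasi-norm in terms of the distribution function; once this is done, the assertion becomes an instance of Minkowski's integral inequality for the sequence space $\ell^{\nu/p}$, and it is exactly here that the hypothesis $\nu\ge p$ is used.

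First I would record the standard dyadic characterization: for a $\mu$-measurable function $F$ on a $\sigma$-finite space $(\cX,\mu)$, with distribution function $\mu_F(\lambda)=\mu(\{|F|>\lambda\})$, one has $\|F\|_{L^{p,\nu}(\cX,\mu)}^{\nu}\asymp\sum_{k\in\bbZ}\big(2^{k}\mu_F(2^{k})^{1/p}\big)^{\nu}$, with the sum replaced by $\sup_{k}$ when $\nu=\infty$ and the implied constants depending only on $p$ and $\nu$; this follows at once from the monotonicity of $\mu_F$. Raising to the $p$-th power it reads $\|F\|_{L^{p,\nu}}^{p}\asymp\big\|\{2^{kp}\mu_F(2^{k})\}_{k}\big\|_{\ell^{\nu/p}}$. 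Next I would apply this on the product space to $F=G$. By Tonelli's theorem $\mu_G(\lambda)=\int_{\cX_1}\mu_{2,G(x_1,\cdot)}(\lambda)\,d\mu_1$, where $\mu_{2,g}$ is the distribution function of $g$ on $(\cX_2,\mu_2)$, and therefore
\begin{multline*}
\|G\|_{L^{p,\nu}(\mu)}^{p}\asymp\Big\|\Big\{2^{kp}\!\int_{\cX_1}\mu_{2,G(x_1,\cdot)}(2^{k})\,d\mu_1\Big\}_{k}\Big\|_{\ell^{\nu/p}}\\
=\Big\|\int_{\cX_1}\big\{2^{kp}\mu_{2,G(x_1,\cdot)}(2^{k})\big\}_{k}\,d\mu_1\Big\|_{\ell^{\nu/p}}.
\end{multline*}
Since $\nu\ge p$, the exponent $\nu/p$ is at least $1$, so $\ell^{\nu/p}$ is a Banach space and Minkowski's integral inequality bounds the last quantity by $\int_{\cX_1}\big\|\{2^{kp}\mu_{2,G(x_1,\cdot)}(2^{k})\}_{k}\big\|_{\ell^{\nu/p}}\,d\mu_1$ (for $\nu=\infty$ this step is merely $\sup_{k}\int\le\int\sup_{k}$). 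Applying the dyadic characterization once more, now on $(\cX_2,\mu_2)$, identifies the integrand with a constant multiple of $\|G(x_1,\cdot)\|_{L^{p,\nu}(\mu_2)}^{p}$, and \eqref{fubinilor} follows.

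I do not expect a genuinely hard step here; the only points needing a word of care are the measurability of $x_1\mapsto\|G(x_1,\cdot)\|_{L^{p,\nu}(\mu_2)}$ and the legitimacy of the manipulations — both of which follow from Tonelli's theorem once one disposes of the trivial case in which the right-hand side of \eqref{fubinilor} is infinite — and the observation that the hypothesis $\nu\ge p$ is precisely what makes $\ell^{\nu/p}$ normable, so that Minkowski's inequality is available (for $\nu<p$ the stated inequality fails). In short, the whole content of the lemma is the recognition that the distribution-function description of $L^{p,\nu}$ converts the claim into a Minkowski inequality for $\ell^{\nu/p}$-valued integrals.
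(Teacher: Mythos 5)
Your argument is correct and follows essentially the same route as the paper: both express the Lorentz quasi-norm via the distribution function, use Fubini/Tonelli to write $\mu_G(\lambda)=\int_{\cX_1}\mu_{2,G(x_1,\cdot)}(\lambda)\,d\mu_1$, and then apply Minkowski's integral inequality in exponent $\nu/p\ge 1$ (the paper works directly with the continuous integral $\int_0^\infty\alpha^{\nu-1}\mu_G(\alpha)^{\nu/p}\,d\alpha$, whereas you pass to the equivalent dyadic sum, which is a purely cosmetic difference).
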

\begin{proof}
Let $p\le \nu<\infty$. By Fubini's theorem, the Lorentz space norm
$\|G\|_{L^{p,\nu}(\cX_1\times\cX_2)}$ is controlled by
\begin{align*}
\Big(\int_0^\infty \alpha^{\nu -1}
\Big[\int_{\cX_1} \mu_{2}(\{x_2\in\cX_2:|G(x_1,x_2)|>\alpha\})\,d\mu_1\Big]^{\nu/p} d\alpha
\Big)^{1/\nu}.
\end{align*}
By Minkowski's inequality, this is bounded by
$$\Big(\int_{\cX_1} \Big(\int_0^\infty \alpha^{\nu -1}
[ \mu_{2}(\{x_2\in\cX_2:|G(x_1,x_2)|>\alpha\})]^{\nu/p} d\alpha
\Big)^{p/\nu} d\mu_1\Big)^{1/p},
$$
which is comparable
to the right hand side of \eqref{fubinilor}. The case $\nu=\infty$ is similar.
\end{proof}

We state   a result only
for multipliers that are compactly
supported
away from the origin.

\begin{theorem}  \label{multcomplor}
Let $d\ge 4$, $1<p<p_d=\frac{2d-2}{d+1}$, $p\le \nu\le \infty$,
 and let  $m$ be  radial and supported in $\{\xi:1/2\le |\xi|\le 2\}$.
Then
\begin{align}\label{weaktypenu}
\big\|T_m\big\|\ci{L^p\to L^{p,\nu}}
&\,\asymp \,\|\widehat
m\|\ci{L^{p,\nu}}\,,
\\
\label{strongtypenu}
\big\|T_m\big\|\ci{L^{p,\nu}\to L^{p,\nu}}
&\,\asymp \,\|\widehat m\|\ci{L^{p}}\,.
\end{align}
\end{theorem}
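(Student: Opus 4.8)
The plan is to deduce Theorem \ref{multcomplor} from the unweighted estimate \eqref{compsupp} (proved in \S\ref{compmult}) together with the Lorentz-space version of the underlying building-block lemmas, using real interpolation. The main observation is that the entire argument of \S\ref{mainsect}--\S\ref{compmult} rests on two ingredients: the $L^2$ bound of Lemma \ref{L2bdprop} and the support bound of Lemma \ref{supportlemma}, which are combined in Lemma \ref{lpbound}. That combination actually produces a \emph{restricted strong type} statement at each fixed density $u$, and summing in $u\in\cU$ with the geometric gain $u^{-(1/p-1/p_d)}$ gives a genuine restricted-type bound at the endpoint $p$. Concretely, one shows that the operator
$$
\ga\longmapsto \sum_{(y,r)\in\cY\times\cR}\ga(y,r)F_{y,r}
$$
maps the weighted sequence space $\ell^{p,1}(\cY\times\cR; r^{d-1})$ boundedly into $L^p(\bbR^d)$ for every $p<p_d$, and even into $L^{p}$ at $p=p_d'$ for any $p_d'<p_d$ — the point being that at a fixed dyadic level of $|\ga|$ one has a restricted-type inequality with the sharp power of the cardinality. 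This is exactly the structure already used (via Lemma \ref{dyadicinterpol}) to pass from \eqref{lessE} to \eqref{lpimpr} in the model Lemma \ref{modellemma}.

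The first step is therefore to record, for $1<p<p_d$, the restricted-type endpoint inequality
$$
\Big\|\sum_{(y,r)\in E}\ga(y,r)F_{y,r}\Big\|_p \lc \big(\sup_{(y,r)}|\ga(y,r)|\big)\Big(\sum_{(y,r)\in E} r^{d-1}\Big)^{1/p}
$$
for finite $E\subset\cY\times\cR$; this is precisely what the proof of Proposition \ref{main} gives before the final interpolation (one keeps the bound $\|G_u\|_p\lc u^{-(1/p-1/p_d)}\sqrt{\log(2+u)}\,(\sum_k 2^{k(d-1)}\#\cE_k)^{1/p}$ and sums in $u$, since $1/p-1/p_d>0$ for $p<p_d$). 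By the reasoning in \S\ref{compmult} — writing $K=K_0+K_\infty$, absorbing $K_0$ into $L^1$, and using \eqref{oneinftyintegral} — this translates into: the map $m\mapsto T_m$, restricted to radial multipliers supported in $\{1/2<|\xi|<2\}$, satisfies
$$
\|T_m f\|_{p}\lc \|\widehat m\|_{p,1}\,\|f\|_{p,\infty}\quad\text{(restricted type)},
$$
i.e. $T_m$ maps $L^{p,1}$ to $L^p$ with norm $\lc\|\widehat m\|_{p,1}$, for every $p<p_d$, and the analogous statement at an endpoint. Here the passage from the single scale to a genuine operator norm uses that $m$ is compactly supported away from $0$, so only finitely many dyadic pieces enter and \eqref{FTbound}-type bounds are uniform.

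For the \textbf{second step}, I would run the bilinear real-interpolation machine (the general Marcinkiewicz interpolation theorem for Lorentz spaces, as in \cite{stw}, ch.~V) in \emph{two} variables simultaneously: the multiplier $m$ enters linearly through $\widehat m$, and the input function $f$ enters linearly. Fix $p<\nu\le\infty$. Choosing two exponents $p_0<p<p_1$ with $p_1<p_d$, one has at each endpoint a restricted-type bound of the above form; bilinear interpolation in $m$ (on the scale $\widehat m\in L^{p_i}\rightsquigarrow L^{p,\nu}$) and in $f$ (on the scale $L^{p_i}\rightsquigarrow L^{p,\nu}$) yields
$$
\|T_m f\|_{p,\nu}\lc \|\widehat m\|_{p,\nu}\,\|f\|_{p}\qquad\text{and}\qquad
\|T_m f\|_{p,\nu}\lc \|\widehat m\|_{p}\,\|f\|_{p,\nu},
$$
which are \eqref{weaktypenu} (in its nontrivial direction) and \eqref{strongtypenu}. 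The reverse inequalities are routine: one tests $T_m$ on suitable functions (or uses $\|\widehat m\|_{p,\nu}=\|\widehat m * (\text{mollifier})\|_{p,\nu}\lc\|T_m\eta\|_{p,\nu}$-type estimates), exactly as the "finiteness of the right-hand side is obviously necessary" remark after Theorem \ref{mainthm}. For the mixed-norm bookkeeping that appears when one interpolates a two-variable expression — an integral over $r$ of convolutions — Lemma \ref{fubinilemma} is precisely the tool that lets one pull the $L^{p,\nu}$ norm inside the $r$-integral at cost of an $\ell^p$ (not $\ell^\nu$) sum, which is what makes the iteration close.

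The \textbf{main obstacle} I anticipate is the bookkeeping in the bilinear interpolation: one must check that the restricted-type estimate from Step 1 holds at the correct pair of endpoints with \emph{uniform} constants as $p_i\to p_d$, and that the Lorentz-space Marcinkiewicz theorem applies to the sublinear, two-parameter situation (with $\widehat m$ rather than $m$ as the interpolation variable, which requires the linear dependence $m\mapsto T_m$). The condition $\nu\ge p$ is exactly what is needed for Lemma \ref{fubinilemma} and hence for the mixed-norm steps; for $\nu<p$ the argument breaks, which is consistent with the statement of the theorem. Once these points are handled, the proof is a relatively short consequence of the machinery already built in \S\S\ref{modelsect}--\ref{compmult}.
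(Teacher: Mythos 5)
Your proposal assembles the right ingredients — the joint-variable operator $h\mapsto\int\!\int h(y,r)F_{y,r}\,dr\,dy$, Lemma~\ref{fubinilemma}, real interpolation — but the interpolation step is packaged in a way that is both more complicated than necessary and, as written, not well-founded. The "bilinear real-interpolation machine" you invoke is \emph{not} the general Marcinkiewicz theorem in \cite{stw}, ch.~V (which is linear/sublinear); bilinear Lorentz-space interpolation is a genuinely more delicate theory, and you would need to verify its hypotheses carefully for the two-parameter operator $(\widehat m,f)\mapsto T_m f$. Moreover, your intermediate estimate $\|T_m f\|_p\lc\|\widehat m\|_{p,1}\|f\|_{p,\infty}$ does not follow from the joint restricted-type inequality: plugging tensor-product characteristic functions $\kappa=\chi_A$, $g=\chi_B$ into the joint-variable restricted bound yields $\|T_m f\|_p\lc\|\widehat m\|_{p,1}\|f\|_{p,1}$ (restricted in \emph{both} slots), not a mixed restricted/weak-type estimate. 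To pass from the joint norm to a mixed norm one already has to use something like Lemma~\ref{fubinilemma}, at which point you might as well avoid bilinear interpolation altogether.

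The paper's route sidesteps all of this. It interpolates the \emph{single linear} operator $S:h\mapsto\int\!\int h(y,r)F_{y,r}\,dr\,dy$, which is bounded $L^{p_i}(dy\,r^{d-1}dr)\to L^{p_i}(\bbR^d)$ for $p_0<p<p_1<p_d$ by \eqref{mainlpcont}; ordinary real interpolation then yields $S: L^{p,\nu}\to L^{p,\nu}$, i.e.\ \eqref{mainlpnucont}. Only \emph{after} this does one specialize to the tensor product $h(y,r)=\kappa(r)g(y)$ (with $K=K_0+K_\infty$ and $K_0$ absorbed into $L^1$), and Lemma~\ref{fubinilemma} is applied twice — once with $\cX_1=\bbR^d$, $\cX_2=[1,\infty)$ and once with the roles reversed — to peel off $\|\kappa\|_{L^{p,\nu}}\|g\|_p$ and $\|\kappa\|_p\|g\|_{L^{p,\nu}}$, giving \eqref{weaktypenu} and \eqref{strongtypenu} respectively. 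You should reorganize Step~2 along these lines: linear interpolation of $S$ in the joint variable, \emph{then} Lemma~\ref{fubinilemma}. Finally, note that the lower bound in \eqref{strongtypenu} (that the $L^{p,\nu}\to L^{p,\nu}$ norm controls $\|\widehat m\|_p$) is not a routine testing argument — the paper invokes Colzani's theorem on translation-invariant operators on Lorentz spaces \cite{colzani} to pass from the $L^{p,\nu}\to L^{p,\nu}$ operator norm to the $L^p\to L^p$ one, after which the testing characterization of \S\ref{compmult} applies.
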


\begin{proof}
The lower bound for the operator norm in \eqref{weaktypenu}
 follows in the
usual way, by testing on suitable  Schwartz functions. By Colzani's
theorem (\cite{colzani}) for convolution operators,  the
$L^p\to L^p$
operator norm is
controlled  by the $L^{p,\nu}\to L^{p,\nu}$
operator norm and this implies the lower
bound for  \eqref{strongtypenu}.

For the upper bounds, we apply real interpolation to  the second
 inequality in Corollary \ref{mainlp}
and obtain
\begin{equation}\label{mainlpnucont}
\Big\| \int_{\bbR^d}\int_0^\infty
h(y,r) F_{y,r} \, dr dy \Big\|_{L^{p,\nu}(\bbR^d)} \lc
\|h\|_{L^{p,\nu}(\bbR^d\times [1,\infty);dy\, r^{d-1}dr)}.
\end{equation}

Now let $K=\widehat m$. We argue as in \S\ref{compmult}.
Split $K=K_0+K_\infty$.
Then $\|K_0\|_1\lc \|K_0\|_{L^{p,\nu}}$  and therefore
$\|K_0*f\|_{L^{p,\nu}}\lc \|K\|_{L^{p,\nu}} \|f\|_{L^{p,\nu}}$.
To estimate the main term $K_\infty*f =\psi* K_\infty *g$ we express
it as in
\eqref{oneinftyintegral} and then apply \eqref{mainlpnucont}.
Using  Lemma \ref{fubinilemma} we can estimate
$\|\psi* K_\infty *g\|_{L^{p,\nu}}$ by
either
$$\|\ka\|_{L^{p,\nu}(\bbR^+, r^{d-1}dr)} \|g\|_{L^p(\bbR^d)} =
C\|K\|_{L^{p,\nu}(\bbR^d)} \|g\|_{L^p(\bbR^d)},
$$
or by
$$\|\ka\|_{L^{p}(\bbR^+, r^{d-1}dr)} \|g\|_{L^{p,\nu}(\bbR^d)} =
C\|K\|_{L^{p}(\bbR^d)} \|g\|_{L^{p,\nu}(\bbR^d)}.
$$
\end{proof}

\noi{\it Remark:}  One can also obtain  $L^p\to L^{p,\nu}$ estimates for 
multipliers that  are not necessarily compactly supported.
However the proper generalization of the  $L^{p,\nu}\to L^{p,\nu}$ bound in 
\eqref{strongtypenu} presents some difficulties at the current stage. 
We hope  to consider these and related matters later.

\section{The  regularity result for the wave equation}
\label{lsmsect}

%
In this section we shall prove
Theorem \ref{lsmsobolev}. We first  note
 that by a standard scaling argument, it suffices to prove
the inequality
\begin{equation}\label{normalizedSob}
\Big(\int_1^2\big\|e^{it\sqrt{-\Delta}} f\big\|^q_q \, dt\Big)^{1/q}
\lc \|(I-\Delta)^{\alpha/2} f\|_q.
\end{equation}
Indeed, let us
first show how
\eqref{unscaledSob}
follows assuming \eqref{normalizedSob} (here $q<\infty$). We may assume by symmetry that in
\eqref{unscaledSob} we integrate over $[0,L]$. We then write
\begin{align*} &\Big(L^{-1}\int_0^L\|e^{it\sqrt{-\Delta}}f\|_q^q
\, dt\Big)^{1/q}
\,\le \sum_{n=1}^\infty
\Big(L^{-1}\int_{2^{-n}L}^{2^{-n+1}L}\|e^{it\sqrt{-\Delta}}f\|_q^q
\, dt\Big)^{1/q}
\\
&= \sum_{n=1}^\infty 2^{-n/q}
\Big(\int_{1}^{2}\|e^{iL2^{-n}s\sqrt{-\Delta}}f\|_q^q
\, ds\Big)^{1/q}=
\sum_{n=1}^\infty 2^{-n/q} \big(*)_n
\end{align*}
where
\begin{equation*}
(*)_n=
\Big(\int_{1}^{2}\int_{\bbR^d} \big|[e^{is\sqrt{-\Delta}}f_{L,n}] (L^{-1}2^n x)\big|^q dx \,ds\Big)^{1/q} \text{ and } f_{L,n}(y)= f(L2^{-n}y).
\end{equation*}
We change variables in $x$, apply  \eqref{normalizedSob},
and then change variables again to see that
$$(*)_n \lc
(L2^{-n})^{d/q} \|(I-\Delta)^{\alpha/2}f_{L,n}\|_q=
\|(I-2^{-2n}L^2 \Delta)^{\alpha/2}f\|_q
.$$
Now
we have for  $\alpha\ge 0$, $n\ge 0$,
$$\|(I- 2^{-2n}L^2\Delta)^{\alpha/2}f\|_q
\le C_q \|(I- L^2\Delta)^{\alpha/2}f\|_q$$
where $C$ does not depend on $L$ and $n$; for $1<q<\infty$, this  follows, for
example, from the Mikhlin-H\"ormander multiplier theorem.
Thus
$(*)_n$ is bounded by the right hand side of
\eqref{unscaledSob} uniformly in $n\ge 1$, and, for $q<\infty$,  
the sum $\sum_{n=1}^\infty 2^{-n/q}(*)_n$ is essentially
 dominated by the same quantity.

We shall actually obtain  an improvement of \eqref{normalizedSob},
which is formulated using dyadic decompositions.
Let $\eta_\circ $ be  as in \eqref{one}. 
Define           
$P_k$  by $\widehat{P_k f}=
(\widehat \eta_\circ(2^{-k}\xi))^2 \widehat f$ for $k>0$ and
$P_0=I-\sum_{k\ge 1}P_k$.
We
have chosen $k$ as our index for the dyadic frequency pieces
instead of $s$, firstly to distinguish it from the homogeneous
expression ($s\in \bbZ$) used earlier and, secondly,  to match it
with the notation in \S\ref{mainsect};  the term for
large frequencies $\approx 2^k$ will correspond, after an appropriate
 rescaling, to
the situation of Corollary \ref{mainlp} when the radii are taken
in $[2^k, 2^{k+1}]$.

\begin{theorem} \label{triebellizthm}
Suppose $d\ge 4$, $\frac{2d-2}{d-3}<q<\infty$, and
$\alpha=d(\frac 12-\frac 1q)-\frac 12.$
Then
\begin{equation}\label{vectorineq}
\Big(\int_1^2\Big\|\sum_{k\ge 0} |P_k e^{it\sqrt{-\Delta}}f|
\Big \|_{q}^q \,dt\Big)^{1/q}
\lc \Big(\sum_{k\ge 0}2^{k\alpha q}\big\|P_k f\big\|_{q}^q
\Big)^{1/q}.
\end{equation}
\end{theorem}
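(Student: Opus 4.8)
The plan is to obtain \eqref{vectorineq} by duality, reducing it --- via one application of the building-block machinery of \S\ref{mainsect}--\S\ref{largeradsect} inside the atomic-decomposition scheme of \S\ref{atomicsect} --- to estimates already proved. The structural point is that, although $e^{it\sqrt{-\Delta}}$ is a genuinely $(t,x)$-dependent operator, for each fixed $t$ it is a \emph{radial} Fourier multiplier, so that its frequency-localized pieces, after rescaling, fall into the framework of \S\ref{mainsect} with radii $r\asymp 2^k$. The scaling reduction of \eqref{unscaledSob} to \eqref{normalizedSob}, and of the latter to the dyadic form \eqref{vectorineq}, is the routine argument sketched at the start of the section, so I concentrate on \eqref{vectorineq}.

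\textbf{Step 1: duality.} Linearizing the inner $\ell^1$-sum, $\sum_k|P_ke^{it\sqrt{-\Delta}}f|=\sum_k\overline{\omega_k(t,\cdot)}\,P_ke^{it\sqrt{-\Delta}}f$ with $|\omega_k|\equiv 1$, and then dualizing the $L^q_{t,x}$-norm on the left, I would use $(e^{it\sqrt{-\Delta}})^{\ast}=e^{-it\sqrt{-\Delta}}$, $P_k^{\ast}=P_k$, and the duality $(B^{\alpha}_{q,q})^{\ast}=B^{-\alpha}_{q',q'}$ (note that the right side of \eqref{vectorineq} is the $B^{\alpha}_{q,q}$-norm of $f$) to reduce \eqref{vectorineq} to the dual inequality
\begin{equation*}
\Big\|\sum_k\int_1^2 e^{-it\sqrt{-\Delta}}P_k\big(\omega_k(t,\cdot)G(t,\cdot)\big)\,dt\Big\|_{B^{-\alpha}_{q',q'}}\lc\|G\|_{L^{q'}([1,2]\times\bbR^d)},\qquad q'=\tfrac{q}{q-1},
\end{equation*}
uniformly in the unimodular weights $\omega_k$. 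This is the step that invokes the hypothesis $q>q_d$: it is equivalent to $q'<p_d$, the exact range in which Proposition \ref{main} and Proposition \ref{germat} are valid. Since the $k$-th summand on the left is frequency-localized to $|\xi|\asymp 2^k$, its $B^{-\alpha}_{q',q'}$-norm is $\asymp\big(\sum_k 2^{-k\alpha q'}\|\Phi_k\|_{q'}^{q'}\big)^{1/q'}$, where $\Phi_k=\int_1^2 e^{-it\sqrt{-\Delta}}P_k(\omega_k(t)G(t))\,dt$.

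\textbf{Step 2: one frequency block as a spherical-means sum.} Fix $k$ and rescale so that the frequency becomes $\asymp 1$; then $e^{-ir\sqrt{-\Delta}}$ with $r=2^kt\in[2^k,2^{k+1})$ has a convolution kernel which, by the Bessel asymptotics $\widehat\sigma_r(\xi)=r^{(d-1)/2}|\xi|^{-(d-1)/2}(c_+e^{ir|\xi|}+c_-e^{-ir|\xi|})+O(r^{(d-3)/2})$ for $|\xi|\asymp 1$ (used already in Lemma \ref{scalarlemma} and \eqref{FTbound}), equals $r^{-(d-1)/2}$ times a fixed multiple of $\sigma_r*\psi*(\text{a harmless fixed operator})$ plus a remainder that gains a power of $r^{-1}$ and is absorbed. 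Thus $\Phi_k$, after rescaling, becomes --- modulo error --- an expression $\int\!\int_{2^k}^{2^{k+1}} h_k(y,r)F_{y,r}\,dr\,dy$ of the kind treated in Corollary \ref{mainlp}, with the associated radial kernel $\int_{2^k}^{2^{k+1}}(\cdots)\sigma_r\,dr$ supported in $\{|x|\gtrsim 2^k\}$, that is, \emph{far} from the origin. This is the decisive point: because the kernel lives in $\{|x|>2^{k-1}\}$, Proposition \ref{germat} applies with $\ell\asymp k$, and, tracking the rescaling, its right-hand side reproduces the sharp exponent $\alpha(q)$ together with an extra decay $2^{-k\eps}$, where $\eps<(d-1)\big(\tfrac1{q'}-\tfrac1{p_d}\big)>0$, coming from the $2^{-\ell\eps}$ factor. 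This $2^{-k\eps}$ is precisely the local-smoothing improvement over the fixed-time Miyachi--Peral exponent, and it is what makes the $k$-summation in Step 1 converge instead of producing a divergent factor $\#\{k\}$.

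\textbf{Step 3: atomic decomposition and the main obstacle.} To carry out Step 2 one deploys, as in \S\ref{atomicsect}, the full atomic machinery: for each $t$ decompose the datum $G(t,\cdot)$ by Peetre's maximal square function into Calder\'on--Zygmund level sets $\Omega_j$ and Whitney atoms, split the rescaled propagator into a short-range and a long-range part, estimate the short-range part by the $L^2$ bound together with the fact that the rescaled wave kernel is concentrated in an annulus of radius $\asymp 2^k$ and bounded width, and feed the long-range part to Lemma \ref{largerad}/Proposition \ref{germat}; one then sums successively over $j$, over the excess scales of the atomic decomposition, and over the frequency index $k$, each time using the dyadic interpolation Lemma \ref{dyadicinterpol} and Lemma \ref{atomsL2bd}. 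The genuinely hard part will be to arrange this triple summation so that all the gains --- the $2^{-k\eps}$ of Proposition \ref{germat}, the support gain from the light-cone localization, and the $L^2$ almost-orthogonality --- combine into the single inequality of Step 1 with the \emph{sharp} exponent $\alpha(q)=d(\tfrac12-\tfrac1q)-\tfrac12$ and no loss. This is also exactly where $q>q_d$ is essential: it is the range $q'<p_d$ in which Proposition \ref{main}, and hence the positive gain $\eps$, is available, and extending the theorem below $q_d$ would require first improving that proposition.
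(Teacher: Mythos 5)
Your Steps 1 and 3 capture the paper's architecture --- dualize so that $q'<p_d$, turn the wave propagator into spherical means, and run the $t$-dependent atomic decomposition of \S\ref{atomicsect} against Proposition \ref{germat}. But Step 2, which is supposed to be the engine, contains two genuine errors.

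First, the reduction to spherical means. Your plan is to expand $\widehat\sigma_r$ via Bessel asymptotics and discard ``a remainder that gains a power of $r^{-1}$.'' This is too weak and, more importantly, it is not what the paper does. The paper's Lemma \ref{averagelemma} writes $K_k$ \emph{exactly} as $2^{k(d-1)/2}\int_{1/2}^2 w_k(\rho)\sigma_\rho\,d\rho + E_k$, where the $w_k$ have uniformly bounded $L^1$ norm and $\|E_k\|_1\lc_M 2^{-kM}$ for every $M$. The mechanism is to run the polar-coordinate / spherical-slice computation for $\widehat K_k$ and recognize the result as an average of $\widehat\sigma_\rho$; no asymptotic expansion with tail corrections is needed. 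A single $O(r^{-1})$ error would not be negligible after one takes $L^{q'}$ and sums over $k$, and the intermediate error terms in a Bessel expansion are not themselves superpositions of $\sigma_r*\psi$, so they cannot be recycled through Corollary \ref{mainlp} without further work.

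Second, and more seriously, you misidentify the role of the $2^{-\ell\eps}$ gain. You claim that because the (rescaled) wave kernel is supported in $\{|x|\gtrsim 2^k\}$, Proposition \ref{germat} ``applies with $\ell\asymp k$,'' giving a single-$k$ estimate with an extra factor $2^{-k\eps}$ which then makes the $k$-sum converge geometrically. But look at the right-hand side of \eqref{germatomic}: taking $\ell=k$ there produces $\bigl(\sum_{W\in\cW^{0}}\meas(W)\|g\chi_W\|_\infty^p\bigr)^{1/p}$ (after rescaling, a sup over enormous cubes), which is \emph{not} dominated by $\|g\|_p$. The extra factor $2^{-\ell\eps}$ is bought at the cost of coarsening the right-hand side, and there is no free lunch: the single-$k$ estimate you want simply does not hold with that gain. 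In the paper, the parameter $\ell$ in Proposition \ref{lsmdualW} is the Whitney-cube scale $L(W(Q,t))+k$ arising from the $t$-dependent atomic decomposition, it ranges over all of $[0,k]$, and the factor $2^{-\ell\eps}$ is used to sum over that \emph{internal} scale. The $k$-sum and $j$-sum are controlled \emph{without any geometric factor}: they converge because the cubes $Q\in\cQ^k_j(t)$, across all $k$ and $j$, partition $\bbR^d$ into level-set pieces of $\cM G(\cdot,t)$, so that $\sum_k\sum_W\meas(W)\|A^W_{k,j}\|_\infty^p\lc 2^{jp}\meas(\Omega_j(t))$ and then $\sum_j 2^{jp}\meas(\Omega_j(t))\lc\|\cM G(\cdot,t)\|_p^p$ (Lemma \ref{atomsL2bd} / Peetre). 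Since this is the central summation and your argument for it is wrong, you do not yet have a proof; fixing it amounts to following \S\ref{lsmsect} in detail, including the $t$-dependent variant of the Peetre/Chang--Fefferman decomposition.
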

The slightly weaker inequality for Sobolev spaces follows if we
replace the $\ell^1$
norm in $k$ on the left hand side of \eqref{vectorineq}
and the $\ell^q$ norm
on the right hand side (with $q>2$)  by the $\ell^2$ norms.
Inequality  \eqref{vectorineq} can be restated using Triebel-Lizorkin
spaces, namely,
\begin{equation*}
\Big(\int_1^2\big\|e^{it\sqrt{-\Delta}}f\big\|_{F^q_{0,1}}^q \,dt\Big)^{1/q}
\lc \|f\|\ci{F^{q}_{\alpha,q}}.
\end{equation*}


It will
be convenient to dispose of the terms corresponding to  $k=0,1$.
 Let
$\chi\ci 0$ be a radial $C^\infty_0(\bbR^d)$ function such  that
$\chi\ci 0(\xi)=1$ for $|\xi|\le 1$ and $\chi\ci 0(\xi)=0$ for $|\xi|\ge
3/2$. One easily checks that $\chi\ci 0(\xi/\la) e^{i|\xi|}$ is the
Fourier transform of an $L^1$ function for any $\la$ (with $L^1$
norm growing in $\la$ for $\la\to \infty$). Indeed, the contribution of
the multiplier near the origin is handled by considering
$m_\ka(\xi)=(\chi\ci 0(2^\ka \xi)-\chi\ci 0(2^{\ka+1}\xi))
(e^{i|\xi|}-1)$. One bounds the derivatives of
$m_\ka(2^{-\ka}\xi)$ for $\ka>0$ to see that the $L^1$ norm of
$\F^{-1}[m_\ka]$  is $ O(2^{-\ka})$.

Next,
we  describe a  further  reduction to  an
inequality involving spherical means (\cf.   \eqref{mainlsm}, \eqref{mukt} below).
This can be done in various ways. One way is to apply the method of
stationary phase in conjunction with
multiplier theorems.
We will give a more direct approach based on the principle that every
radial function can be written as an average of spherical
measures. As before, we let
$\sigma_\rho $ denote  the surface  measure on the sphere of radius $\rho$.

Let $\vth$ be a $C^\infty$-function on the real line supported in
$(1/8,8)$ such that $\vth(s)=1$ on $(1/4,4)$.
  For $k\ge 1$, define  the convolution kernel $K_k$  by
$$\widehat {K_k}(\xi)
= e^{i|\xi|}  \vth(2^{-k}|\xi|).$$

\begin{lemma} \label{averagelemma}
Let $d\ge 2$. Then, for $k\ge 1$,
\begin{equation}\label{reprofKk}K_k= 2^{k(d-1)/2} \int_{1/2}^2
  w_k(\rho) \sigma_\rho d\rho\,+\, E_k\end{equation}
where
\begin{equation} \label{uniformL1}
\sup_k \int_{1/2}^2|w_k(\rho)|d\rho<\infty,
\end{equation}
and, for any $M$,
\begin{equation}\label{Ekdecay}\|E_k\|_1\le
C_{M,d} 2^{-kM }.
\end{equation}
\end{lemma}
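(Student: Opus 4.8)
The plan is to use the elementary observation (already flagged in the paragraph preceding the lemma) that an integrable radial function is \emph{literally} a superposition of sphere measures: if $G(x)=g(|x|)$, then $G=\int_0^\infty g(\rho)\,\sigma_\rho\,d\rho$, as one sees by pairing both sides with a test function and passing to polar coordinates. Since $K_k$ is radial we may write $K_k(x)=\kappa_k(|x|)$ and $K_k=\int_0^\infty\kappa_k(\rho)\,\sigma_\rho\,d\rho$ (convergent in $L^1$ because $\int_0^\infty|\kappa_k(\rho)|\rho^{d-1}\,d\rho=c_d\|K_k\|_1<\infty$). I would then simply \emph{define} $w_k(\rho):=2^{-k(d-1)/2}\kappa_k(\rho)$ for $\rho\in(1/2,2)$ and set $E_k:=K_k-2^{k(d-1)/2}\int_{1/2}^2 w_k(\rho)\,\sigma_\rho\,d\rho$. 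By the displayed identity the last integral is $K_k\cdot\mathbf 1_{\{1/2\le|x|\le2\}}$, so $E_k=K_k\cdot\mathbf 1_{\{|x|<1/2\}\cup\{|x|>2\}}$, and \eqref{reprofKk} holds by construction. Thus the whole lemma reduces to two pointwise bounds on $K_k$.

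The key input is the classical stationary phase analysis of
\[
K_k(x)=c_d\int_{\bbR^d} e^{i(|\xi|+\langle x,\xi\rangle)}\,\vth(2^{-k}|\xi|)\,d\xi .
\]
Writing $\xi=s\omega$ the phase becomes $s(1+\langle x,\omega\rangle)$, which has no critical point in $(s,\omega)$ as long as $|x|\neq 1$ and $|x|>0$ (a critical point forces $\omega=\pm x/|x|$, and then $1+\langle x,\omega\rangle=1\pm|x|$, which vanishes only when $|x|=1$), while for $|x|\approx1$ there is a single nondegenerate critical point at $\omega=-x/|x|$ with Hessian of size $\sim 2^k$. Applying stationary phase in $\omega$ and then non-stationary phase (integration by parts) in $s$ yields, for every $N$,
\[
|K_k(x)|\le C_N\,2^{k(d+1)/2}\bigl(1+2^k\bigl|\,|x|-1\,\bigr|\bigr)^{-N}\qquad(|x|\le 10),
\]
together with rapid decay $|K_k(x)|\le C_N\,2^{-kN}(1+|x|)^{-N}$ for $|x|\ge 10$. (Alternatively one may quote the Miyachi--Peral bounds, already cited in the introduction, which are exactly this fixed-time dispersive estimate for the half-wave group localized to frequency $2^k$ and $|t|=1$, and which directly account for the normalizing factor $2^{k(d-1)/2}$.)

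Granting this, \eqref{uniformL1} is immediate: since $\kappa_k(\rho)=K_k(x)$ with $|x|=\rho$, the bound with $N=2$ gives
\[
\int_{1/2}^2|w_k(\rho)|\,d\rho\lesssim 2^{-k(d-1)/2}\,2^{k(d+1)/2}\int_{1/2}^2\bigl(1+2^k|\rho-1|\bigr)^{-2}\,d\rho\lesssim 2^{-k(d-1)/2}\,2^{k(d+1)/2}\,2^{-k}=1,
\]
uniformly in $k$. For \eqref{Ekdecay} note that on $\{|x|<1/2\}\cup\{2<|x|<10\}$ one has $|\,|x|-1\,|\gtrsim 1$, hence $|K_k(x)|\le C_N 2^{k(d+1)/2}2^{-kN}$ there, while on $\{|x|\ge 10\}$ the rapid-decay bound applies; integrating over $\{|x|<1/2\}\cup\{|x|>2\}$ and then choosing $N=N(M,d)$ large enough gives $\|E_k\|_1\le C_{M,d}2^{-kM}$.

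The only genuine work is the pointwise estimate in the second paragraph; everything else is bookkeeping. I expect the main (though entirely standard) obstacle to be organizing that estimate cleanly — in particular making the integration by parts produce decay uniformly both in the angular variables near the antipode $\omega=-x/|x|$ and in $s$, and keeping track of the lower-order terms of the stationary-phase expansion so that they remain negligible — or else simply isolating the precise statement of the Miyachi--Peral estimate one wants to invoke and checking that the localization of $K_k$ to $\{|x|<1/2\}\cup\{|x|>2\}$ is $O(2^{-kM})$ in $L^1$ for every $M$.
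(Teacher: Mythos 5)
Your decomposition is exactly the paper's: write $K_k(x)=\kappa_k(|x|)$, set $w_k(\rho)=2^{-k(d-1)/2}\kappa_k(\rho)$ on $[1/2,2]$, and take $E_k=K_k\cdot\mathbf 1_{\{|x|\notin[1/2,2]\}}$, so that the whole lemma reduces to the single pointwise estimate $|K_k(x)|\lesssim 2^{k(d+1)/2}\bigl(1+2^k\,\bigl||x|-1\bigr|\bigr)^{-N}$ for all $x$ (and one must then also check that this remains summable against the weight $\rho^{d-1}$ on $(2,\infty)$, which it does once $N>d$). Where you diverge from the paper is in how this bound is obtained. The paper deliberately sidesteps the higher-dimensional stationary-phase/multiplier-theorem route you invoke: slicing $S^{d-1}$ into $(d-2)$-spheres perpendicular to $x$ yields the exact one-variable representation $(2\pi)^d K_k(x)=c_{d-2}\,2^{kd}\int_{-1}^1\Theta\bigl(2^k(1+\tau|x|)\bigr)(1-\tau^2)^{(d-3)/2}\,d\tau$, with $\Theta$ a Schwartz function having $\supp\wh\Theta\subset(1/8,8)$, and the needed decay \eqref{decaybound} is then proved by an elementary three-case argument in $\rho=|x|$, the case $\rho>1$ using a short contour shift made possible by the compact Fourier support of $\Theta$. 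This is more than a stylistic difference: in your $(s,\omega)$ stationary phase the angular Hessian has magnitude $\sim s|x|$ and degenerates as $|x|\to 0$, so near the origin you must fall back on non-stationary-phase in $s$ alone — exactly the uniformity issue you flag at the end, and it is a real (though repairable) gap as your sketch stands — whereas the one-dimensional formula handles all $|x|$ at once and also quietly absorbs the weight $(1-\tau^2)^{(d-3)/2}$, which is singular at $\tau=\pm1$ when $d=2$ and not smooth there for even $d$, and so would be awkward in a chart-based stationary-phase argument. Quoting the Miyachi--Peral fixed-time bounds is of course legitimate, but the explicit computation is what produces the density $w_k$ used later in \eqref{mukt}.
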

\begin{proof}
 We use polar coordinates for the Fourier integral defining $K_k$ and then write an integral over the sphere
 $S^{d-1}$
in terms of integrals over $d-2$ dimensional spheres perpendicular to
 $x$.
We get
\begin{align*} (2\pi)^d  K_k(x)&=
\int_{\bbR^d}\vth(2^{-k}|\xi|) e^{i|\xi|}e^{i\inn{\xi}{x} }d\xi
\\&= 2^{k(d-1)} \int_0^\infty
 \vth(2^{-k}s) (2^{-k} s)^{d-1} e^{is} \int_{S^{d-1}}
e^{is|x|\inn{\frac{x}{|x|}}{\theta} } d\sigma(\theta) \, ds
\\&=c_{d-2}
2^{k(d-1)} \int_{-1}^1
 2^k\Theta(2^k(1+\tau|x|)) (1-\tau^2)^{\frac{d-3}{2}} d\tau,
\end{align*}
where $c_{d-2}$ is the surface measure of the unit sphere $S^{d-2}$ and
$$\Theta (\sigma)= \int_0^\infty\vth(s) s^{d-1}
e^{is\sigma} ds.$$
Clearly $\Theta\in \cS(\bbR)$.

From the above formula it is clear that
\eqref{reprofKk} holds with $$w_k(\rho)=c_{d-2}(2\pi)^{-d}
2^{k\frac{d-1}2} \int_{-1}^1
 2^k\Theta(2^k(1+\tau\rho)) (1-\tau^2)^{\frac{d-3}{2}} d\tau$$
and $E_k(x)= 2^{k\frac{d-1}2} w_k(|x|) [1- \chi\ci{[1/2,2]}(|x|)]$.

Let $\gamma>-1$ be fixed  and let $\Theta$ be any Schwartz function on $\bbR$ whose Fourier transform is supported in $(1/8,8)$. We prove that for $\beta\ge 1$, $\rho>0$,
\begin{equation}\label{decaybound}
\int_{-1}^1
\Theta(\beta(1+\tau\rho )) (1-\tau^2)^{\gamma} d\tau \le C
\beta^{-\gamma-1}  (1+\beta|1-\rho|)^{-N}
\end{equation} for any $N         >1$.
Here
$C\ge 0$ depends on $\Theta,\gamma, N$ but not on $\beta$ or $\rho$.
Clearly,  the  $L_1((0,\infty))$ norm of the right hand side 
of \eqref{decaybound} is $O(\beta^{-\gamma-2})$. Thus 
 \eqref{decaybound} applied with  $\gamma=\frac{d-3}2$
and $\beta=2^k$  yields  the bounds
\eqref{uniformL1} and \eqref{Ekdecay}.


The bound   \eqref{decaybound} is straighforward; one  examines
 separately the three  cases
$0<\rho<\frac 12$,
$1/2\le \rho\le 1$, and $\rho>1$. We may assume that $N\ge 1$.

Let $C_0=\sup_{x\in\mathbb R}|\Theta(x)|(1+2|x|)^{N+\gamma+2}$.
Then, for  $0<\rho<\frac 12$, the integral can be estimated by
$$C_0\int_{-1}^1(1-\tau^2)^\gamma d\tau \,(1+\beta)^{-N-\gamma-2}$$
which is better than the claimed bound.

If $1/2\le \rho\le 1$, we split the integral over $[-1,1]$ as
$\int_{-1}^0+
\int_0^1$. For the latter, we may  argue as in the previous case  and
bound it
by the last displayed expression.
For the integral over  $[-1,0]$, we make the change of
 variable $\tau=-1+t$, set
$C_1=\sup_{x\in\mathbb R}|\Theta(x)|(1+|x|)^{N+\gamma+2}$ and  bound
$|\int_{-1}^0\cdots d\tau|$ by
$$C_1 \int_0^1 \frac{t^{\gamma}(2-t)^\gamma}
{(1+\beta(1-\rho)+\beta\rho t)^{N+\gamma+2} }\,dt
\le C_2 (\beta\rho)^{-\gamma-1} (1+\beta(1-\rho))^{-N-1}
$$
where
$C_2= C_1\max\{1, 2^\gamma\}
\int_0^\infty t^\gamma(1+t)^{-N-\gamma-2} dt$.

Finally we consider the last case, $\rho\ge 1$.
Here we use the fact that the Fourier transform of $\Theta$ is
supported
in $( 1/8,8)$ and thus $\Theta$ extends to an entire function satisfying
 $|\Theta(x+iy)| \le C e^{-y/8} $ for $y\ge 0$.
Now set $g_\gamma(z)= (1-z^2)^\gamma$ so that $g_\gamma$ is analytic
in the upper half plane
and $g_\gamma(x)$ is nonnegative  for $x\in [-1,1]$.
By Cauchy's theorem and  limiting arguments,
the integral over the real line  of $\Theta g_\gamma$ vanishes and
therefore
$$\Big|\int_{-1}^1\Theta(x) g_\gamma(x) dx\Big|=\Big|\int_{\mathbb R\setminus
  [-1,1]}\Theta(x) g_\gamma(x) dx\Big|\,.$$
The latter integral is bounded by
$$
C_3\int_1^\infty \frac{(\tau^2-1)^\gamma}
{[1+\beta(\tau\rho-1)]^{N+2\gamma+2}}\,d\tau
=
C_3\int_0^\infty \frac{[t(2+t)]^\gamma}{[1+\beta(\rho-1)+\beta\rho
    t]^{N+2\gamma+3}}\,dt
$$
where
$C_3=2\sup_{x\in\mathbb R}|\Theta(x)|(1+|x|)^{N+2\gamma+3}$.
One separately considers the cases $\gamma\ge0$ and $-1<\gamma<0$. It
is not hard to see that in both cases the last displayed expression
can be estimated by
$$C_3\max\{1,4^\gamma\} \int_0^\infty t^\gamma (1+\beta(\rho-1)+\beta\rho
    t)^{-N-\gamma-2}dt
$$ which in turn is equal to
$$C_4 (\beta\rho)^{-\gamma-1} (1+\beta(\rho-1))^{-N-2}$$
with $C_4=C_3\max\{1,4^\gamma\}
\int_0^\infty\frac{t^\gamma}{(1+t)^{N+\gamma+2}}dt$.
\end{proof}

We  continue with the proof of \eqref{vectorineq}.
Let  $K_{k,t}= t^{-d} K_k (t^{-1}\cdot)$ with $K_k$ as in the
lemma and observe that
$$P_k [e^{it\sqrt{-\Delta}} f]= P_k[K_{k,t}* f], \quad 1/2\le t\le 2.$$
We first dispose of the error terms $E_k$. Let $E_{k,t}=t^{-d}E_k(t^{-1}\cdot)$. 
Then for any fixed $t\in [1,2]$
$$\Big\|\sum_{k\ge 0}|E_{k,t}*P_k f|\Big\|_q\lc \sum_{k\ge 0} 
2^{-kM} \|P_kf\|_q$$
which, by H\"older's inequality, is controlled by the right hand side of 
\eqref{vectorineq}.

Now define
\begin{equation}\label{mukt} \mu_{k,t} =
\int_{1/2}^2 w_k(\rho)\sigma_{\rho t} d\rho,
\end{equation}
with $w_k$ satisfying \eqref{uniformL1}.
In view of Lemma \ref{averagelemma}, it suffices to prove that,
 for $q>q_d$,
 the estimate
\begin{equation}
\label{fkpestimate}\Big(\int_{1}^2\Big\|\sum_{k=2}^\infty 2^{k\frac{d-1}2}\big| \mu_{k,t}
 *\psi_k*f_k\big|
\Big\|_q^q dt\Big)^{1/q}
\lc \Big(\sum_{k} \|f_k\|_q^q 2^{kq
(d(\frac12-\frac 1q)-\frac12)}
 \Big)^{1/q};
\end{equation}
holds for all $\{f_k\}_{k=2}^\infty $ with  $\widehat f_k$  supported
in $\cA_k:=\{\xi: 2^{k-1}<|\xi|<2^{k+1}\}$. Here $\psi_k$ are suitably chosen so that
  $\psi_k = 2^{kd}\psi (2^k\cdot)$, $\psi=\psi_\circ*\psi_\circ$,
$\psi_\circ$ is supported in
$\{|x|\le 10^{-1}\}$ with $10 d$ vanishing moments
(see the discussion leading to  \eqref{etadefinition}).
In addition we assume that
 $\wh\psi_\o(\xi)\neq 0$ for $1/2\le|\xi|\le 4$.
To see how \eqref{fkpestimate} implies \eqref{vectorineq} we choose
$f_k=2^{k(d-1)/2}L_k f$ with $\widehat{L_k f}(\xi)
=\eta_\circ^2(2^{-k}\xi)
[\wh\psi(2^{-k}\xi)]^{-1} \wh f(\xi)$ and use that $\zeta/\widehat \psi$
is the Fourier transform of a Schwartz function
for every  $\zeta$ that 
is smooth and compactly supported in $\{\xi: 1/3\le
|\xi|\le 3\}$.

It  suffices to prove \eqref{fkpestimate}
for families $\{f_k\}$
for which all but finitely many of the $f_k$ are zero,
with constant  independent of the number of summands.
By duality the desired bound then  follows from
\begin{multline}\label{mainlsm}
\Big(\sum_{k=2}^\infty  2^{k \frac{d}{p'}p}\Big\|\int_1^2 \mu_{k,t} *\psi_k
*g_k(\cdot, t) \,dt \Big\|_p^p
\Big)^{1/p}\\
\lc \Big(\int_1^2\big\|\sup_k |g_k(\cdot,t)|\big\|_p^p \,
dt\Big)^{1/p},\quad p<p_d,
\end{multline}
for all $\{g_k\}_{k=2}^\infty$, with the property
that  the (spatial) Fourier transform of $g_k(\cdot,t)$
is supported in $\cA_k$.

To prove \eqref{mainlsm}, we need the following inequality for fixed $k$
(which will be a straightforward consequence of
Lemma \ref{largerad}).
Let $\cW^{\ell-k}$ denote the set of dyadic cubes of sidelength
$2^{\ell-k}$.
\begin{proposition}\label{lsmdualW}
  Let $1\le p<p_d$ and $\eps<(d-1)(\frac1p-\frac{1}{p_d})$. Then, for  $0\le
  \ell\le k$,
\begin{multline}\label{muktestimate}
\Big\|\int_1^2 \psi_k*\mu_{k,t}* g(\cdot, t)\, dt\Big\|_p\\
\lc_\eps
2^{-k d/p'} 2^{-\ell\eps} \Big(\sum_{W\in \cW^{\ell-k}}|W|
\int_1^2 \sup_{y\in W}|g(y,t)|^p dt\Big)^{1/p}.
\end{multline}
\end{proposition}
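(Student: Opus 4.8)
The plan is to derive \eqref{muktestimate} from the estimate \eqref{ellgain} of Lemma \ref{largerad}. The key idea is to fold the time variable $t$ into the radius of the superposition of spheres, rather than extracting the $t$-integral by Minkowski's inequality: that would be lossy by a factor $2^{k/p'}$, because for each fixed dyadic cube $W$ the functions $\psi_k*\mu_{k,t}*(g\chi_W)$ are supported in essentially disjoint annuli as $t$ runs over a $2^{-k}$-net of $[1,2]$, and this disjointness has to be exploited. The case $p=1$ is immediate, since $\|\psi_k\|_1\lesssim 1$ and $\int|w_k|\,d\rho\lesssim 1$ give $\|\int_1^2\psi_k*\mu_{k,t}*g(\cdot,t)\,dt\|_1\lesssim\int_1^2\|g(\cdot,t)\|_1\,dt$, which is bounded by the right side of \eqref{muktestimate}; so assume $1<p<p_d$.

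Since $w_k$ is supported in $[1/2,2]$,
$$
\int_1^2\psi_k*\mu_{k,t}*g(\cdot,t)\,dt=\int_1^2\int_{1/2}^2 w_k(\rho)\,\psi_k*\sigma_{\rho t}*g(\cdot,t)\,d\rho\,dt .
$$
Changing variables $(\rho,t)\mapsto(\rho,R)$ with $R=\rho t$, so that $dt=\rho^{-1}dR$, and moving the $\rho$-integral through the convolution by Fubini's theorem, this equals $\int_{1/2}^{4}\psi_k*\sigma_R*G_R\,dR$, where
$$
G_R=\int_{\{\rho\in[1/2,2]:\,R/\rho\in[1,2]\}} w_k(\rho)\,\rho^{-1}\,g(\cdot,R/\rho)\,d\rho .
$$
Now rescale by $2^k$: with $T_kh:=h(2^{-k}\cdot)$ one has $T_k\psi_k=2^{kd}\psi$, $T_k\sigma_R=2^{k}\sigma_{2^{k}R}$ and $T_k(a*b*c)=2^{-2kd}\,T_ka*T_kb*T_kc$, whence
$$
T_k\!\Big(\int_1^2\psi_k*\mu_{k,t}*g(\cdot,t)\,dt\Big)=2^{-kd}\int\!\!\int H(y,r)\,F_{y,r}\,dy\,dr ,\qquad H(y,r):=G_{2^{-k}r}(2^{-k}y),
$$
with $F_{y,r}=\sigma_r*\psi(\cdot-y)$ as in \eqref{gyr}. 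Here $H(\cdot,r)=0$ unless $r\in[2^{k-1},2^{k+2}]$, so in particular $r\ge 2^\ell$ whenever $\ell\le k-1$; the remaining values $\ell\in\{k-1,k\}$ cost only a constant, by applying \eqref{ellgain} with $\max(0,\ell-2)$ in place of $\ell$ and noting that refining the dyadic mesh only decreases the relevant sum.

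Applying \eqref{ellgain} to $\int\int H F_{y,r}$ at level $\ell$ and using $\|h\|_p=2^{-kd/p}\|T_kh\|_p$,
$$
\Big\|\int_1^2\psi_k*\mu_{k,t}*g(\cdot,t)\,dt\Big\|_p\lesssim 2^{-kd/p}\,2^{-kd}\,2^{-\ell\eps}\,2^{\ell d/p}\Big(\int_{2^\ell}^\infty\sum_{W\in\cW^\ell}\sup_{y\in W}|H(y,r)|^p r^{d-1}\,dr\Big)^{1/p}.
$$
For $W\in\cW^\ell$ put $V:=2^{-k}W\in\cW^{\ell-k}$; then $\sup_{y\in W}|H(y,r)|=\|G_{2^{-k}r}\chi_V\|_\infty$, and by Hölder's inequality in $\rho$ with the weight $|w_k(\rho)|\,d\rho$ (of total mass $\lesssim 1$, by \eqref{uniformL1}), $\|G_R\chi_V\|_\infty^p\lesssim\int|w_k(\rho)|\,\rho^{-1}\,\|g(\cdot,R/\rho)\chi_V\|_\infty^p\,d\rho$. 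Substituting $R=2^{-k}r$ and then $t=R/\rho$, and using that $\rho,R,t\asymp 1$, the $r$-integral above is $\lesssim 2^{kd}\big(\int|w_k(\rho)|\,d\rho\big)\sum_{V\in\cW^{\ell-k}}\int_1^2\sup_{y\in V}|g(y,t)|^p\,dt\lesssim 2^{kd}\sum_{V\in\cW^{\ell-k}}\int_1^2\sup_{y\in V}|g(y,t)|^p\,dt$. Finally, since $|V|=2^{(\ell-k)d}$ we rewrite $\sum_{V}(\cdot)=2^{(k-\ell)d}\sum_{V}|V|(\cdot)$; collecting the powers of $2$, namely $2^{-kd/p}\cdot2^{-kd}\cdot2^{\ell d/p}\cdot\big(2^{kd}2^{(k-\ell)d}\big)^{1/p}=2^{-kd/p'}$, gives exactly \eqref{muktestimate}.

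The only genuinely delicate step is the first one: one must average in $t$ and in $\rho$ simultaneously, inside the sphere superposition, so that Lemma \ref{largerad} is applied to the fully averaged family. Everything afterwards is bookkeeping; the single mild nuisance is keeping track of the integration domain produced by the substitution $R=\rho t$, which is harmless because $\rho$, $R$ and $t$ are all comparable to $1$.
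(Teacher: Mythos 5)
Your main argument for $1<p<p_d$ is correct, and it follows essentially the same route as the paper's proof: rescale so that the time variable $t\asymp1$ becomes a radial variable $r\asymp2^k$, then invoke Lemma \ref{largerad}. The paper implements this in three steps --- it first proves the analogue of \eqref{muktestimate} for $\psi_k*\sigma_t$ alone (inequality \eqref{rhoeqone}), then extends to $\psi_k*\sigma_{\rho t}$ for a fixed $\rho\in[1/2,2]$ by a further change of scale (inequality \eqref{fixedrho}), and finally averages over $\rho$ using \eqref{uniformL1}. You instead change variables $(\rho,t)\mapsto(\rho,R)$ with $R=\rho t$ right at the start, collect the $\rho$-integral into the kernel $G_R$, and apply Lemma \ref{largerad} once; the average over $\rho$ then appears as a Jensen step at the very end. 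That is a modest streamlining --- it removes the need for the paper's extra fixed-$\rho$ scaling lemma --- but both proofs hinge on the same application of Lemma \ref{largerad} and the overall bookkeeping is the same. Your handling of the corner case $\ell$ close to $k$ (applying \eqref{ellgain} with $\max(0,\ell-2)$ in place of $\ell$) is also fine, since the discrepancy in the dyadic mesh and in the power of $2^\ell$ is absorbed in the implicit constants.

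The one genuine flaw is your treatment of $p=1$. The trivial estimate $\|\int_1^2\psi_k*\mu_{k,t}*g\,dt\|_1\lesssim\int_1^2\|g(\cdot,t)\|_1\,dt\le\sum_W|W|\int_1^2\sup_W|g|\,dt$ yields no decay in $\ell$, whereas the proposition demands the factor $2^{-\ell\eps}$ for a strictly positive $\eps<(d-1)(1-1/p_d)=(d-3)/2$. This is not a matter of taste: without the $\ell$-gain the $\ell$-sum in the proof of \eqref{mainlsm} cannot be closed. There was no need to split off $p=1$ at all. Your main argument works verbatim for $p=1$: the Jensen step in $\rho$ degenerates to Minkowski's inequality, and although Lemma \ref{largerad} is stated for $1<p<p_d$, its proof (via Lemma \ref{lpbound}, which holds for all $p\le 2$, Lemma \ref{dyadicinterpol}, and a convergent geometric sum in $u$) covers $p=1$ as well; the restriction $p>1$ in the statement of Lemma \ref{largerad} is cosmetic. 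Replace the ad hoc $p=1$ paragraph with this observation and the proof is complete.
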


\begin{proof}
We first prove the inequality
\begin{multline}\label{rhoeqone}
\Big\|\int_1^2 \psi_k*\sigma_{t}* g(\cdot, t)\, dt\Big\|_p\\
\lc_\eps
2^{-k d/p'} 2^{-\ell\eps} \Big(\sum_{W\in \cW^{\ell-k}}|W|
\int_1^2 \sup_{y\in W}|g(y,t)|^p dt\Big)^{1/p}.
\end{multline}
We  apply a  rescaling and averaging argument to deduce it from
Lemma \ref{largerad}.
Define $H_{k,t}$ by $\widehat {H_{k,t}}(\xi) =\widehat \psi(\xi)
\widehat \sigma_1 (2^k t\xi)$. The expression  on the
left hand side of \eqref{rhoeqone}
 can be written as
\begin{align*}
&\Big\|\int_1^2 2^{kd} H_{k,t}(2^k\cdot)* g(\cdot,t)\,t^{d-1}dt\Big\|_p
=2^{-kd/p}
\Big\|\int_1^2 H_{k,t}* g(2^{-k}\cdot,t)\, t^{d-1}dt\Big\|_p
\\
&=2^{-kd/p}
\Big\|\int_{2^k}^{2^{k+1}}
\psi*\sigma_r * 2^{-kd}g(2^{-k}\cdot,
2^{-k }r)
 dr\Big\|_p.
\end{align*}
By Lemma \ref{largerad},
 the last expression is
$$\lc 2^{-kd/p} 2^{-\ell\eps}2^{\ell d/p}\times
\Big(\int_{2^k}^{2^{k+1}}\sum_{W'\in\cW^\ell}|W'|
\sup_{y'\in W'} |2^{-kd}g(2^{-k}y', 2^{-k}r)|^p r^{d-1} dr
\Big)^{1/p},
$$
which is dominated by a constant times
\begin{align*}
& 2^{-\ell\eps} \Big(\int_1^2 \sum_{W\in \cW^{\ell-k}} 2^{-kd(p-1)}|W|
\sup_{y\in W} |g(y, t)|^p dt\Big)^{1/p}
\\
&\lc 2^{-\ell\eps}2^{-kd/p'} \Big(\sum_{W\in \cW^{\ell-k}} |W|
\int_1^2
\sup_{y\in W} |g(y,t)|^p dt\Big)^{1/p}.
\end{align*}

It remains to show how \eqref{rhoeqone} implies the assertion of the proposition. Since $\int|w_k(\rho)|d\rho$ is uniformly bounded it suffices, by averaging, to show the uniform bound
\begin{multline}\label{fixedrho}
\Big\|\int_1^2 \psi_k*\sigma_{\rho t}* g(\cdot, t)\, dt\Big\|_p\\
\lc_\eps
2^{-k d/p'} 2^{-\ell\eps} \Big(\sum_{W\in \cW^{\ell-k}}|W|
\int_1^2 \sup_{y\in W}|g(y,t)|^p dt\Big)^{1/p}, \qquad \frac 12\le
\rho\le 2.
\end{multline}
This is a consequence of \eqref{rhoeqone}
by scaling. For the details, assume $\rho\in (1,2]$.
After a change of variables we have to estimate the $L^p$ norm of
$$\Big(\int_{\rho}^2+\int_2^{2\rho}\Big) \big[\psi_k*\sigma_1* g(\cdot, \rho^{-1} t)
 \big](x)
\frac {dt}\rho.
$$

We apply
\eqref{rhoeqone} with the function
$g(\cdot, \rho^{-1} t)\chi_{[\rho,1]}(t)$ to bound the first integral.
The second integral is equal to
$$\frac{2}{\rho}\int_1^\rho \big[\psi_k*\sigma_{2s}*
g(\cdot, \tfrac{2s}{\rho})\big] (x)  ds= \frac{2^d}{\rho} \int_1^\rho
\big[\psi_{k+1}*\sigma_s* g (2\cdot, \tfrac{2s}{\rho})\big] \big(\frac x2) ds
$$
and, after conjugation with a dilation operator, we may apply
\eqref {rhoeqone} (with $\psi_k$ replaced by  $\psi_{k+1}$).
Note that  replacing
$\cW^{\ell-k}$ with
$\cW^{\ell-k-1}$ on the right hand side of
\eqref{fixedrho} yields an equivalent norm.
The argument for $\rho\in [1/2,1)$ is similar.
\end{proof}

We  now use the arguments of \S \ref{atomicsect}
based on  ``atomic''  decompositions for the functions $g_k(\cdot,t)$,
{\it for any fixed}  $t\in [1,2]$.
We work with  the $\ell^\infty$ variant of  Peetre's
operator,
namely,
$$
\cM G (x,t) = \sup_{k>0} \sup_{|y|\le 10 d\cdot 2^{-k}}
|g_k(x+y,t)|,
$$
where it will always be understood that  $G=\{g_k\}_{k=1}^\infty$ and
$g_k(\cdot,t)$ has spectrum in the annulus $\cA_k$.
Then,  with this specification, Peetre's inequality says that
 \begin{equation}\label{peetreinfty}\big\|\cM G(\cdot,
   t)\big\|_{L^p(\bbR^d)}
\lc
\|\sup_k |g_k(\cdot,t)| \,\|_{L^p(\bbR^d)},  \quad 0<p\le \infty.\end{equation}

 For each $t\in [1,2]$, let
$$\Omega_j(t)=\{x\in \bbR^d: \cM G(x,t)>2^j\}.$$ 
 Let $\cQ_j(t)$
be the set
of all dyadic cubes
 which are contained in $\Omega_j(t)$ but not in $\Omega_{j+1}(t)$.

For each dyadic cube of sidelength less than $1$ 
we define an expanded cube  $W(Q,t)$ as follows. We first
let $j(Q)$ be the unique $j$ such that $Q\in \cQ_j(t)$. If the unique
dyadic cube of sidelength $1$ containing $Q$  is contained in 
$\Omega_{j(Q)}(t)$, then we let
$W(Q,t)$ be this cube. If not then we let $W(Q,t)$ be the maximal dyadic
cube that contains $Q$ and that is contained in $\Omega_{j(Q)}(t)$.

We let $\cQ^k_j(t)$ be the family of cubes in $\cQ_j(t)$ which are of sidelength $2^{-k}$. Notice that if $Q$ has sidelength $2^{-k}$  then  the sidelength of $W(Q,t)$ is $2^{\ell-k}$ for some
nonnegative integer $\ell\le k$. As before we denote by
$\cW^{\ell-k}$ the collection of dyadic cubes of sidelength
$2^{\ell-k}$.
We also let $\cW_j(t)$ be the set of dyadic cubes contained in
$\Omega_j(t)$
which are either of sidelength $1$, or of sidelength less than $1$ and maximal
in $\Omega_j(t)$. Notice that the cubes in $\cW_j(t)$ have disjoint
interiors.
With these notations we note that if $Q\in \cQ^k_j(t)$ and $W(Q,t)$
has sidelength $2^{\ell-k}$  then $W(Q,t)$ is a cube in $\cW_j(t) \cap
\cW^{\ell-k}$.

For each $\ell=0,\dots, k$, define
$$A_{k,\ell,j}(x,t) = \sum_{\substack{Q\in\cQ^k_j(t)\\W(Q,t)\in
    \cW^{\ell-k}}}
g_k(x,t)
\chi\ci Q(x)\,.$$

We can now  decompose $$g_k=\sum_{\ell\ge 0}
 \sum_{j\in \bbZ} A_{k,\ell,j}.$$ Using this
decomposition and Minkowski's inequality,
we estimate the left hand side of \eqref{mainlsm} by
\begin{align*}&\sum_{\ell\ge 0}
\Big(\sum_k 2^{k dp/p'}
\Big\|\int_1^2 \mu_{k,t} *\psi_k *
\sum_j
A_{k,\ell,j}(\cdot,t) \,dt
\Big\|_p^p\Big)^{1/p}
\end{align*}
and, by Proposition \ref{lsmdualW}, the term corresponding
to a fixed $\ell$  is
\begin{align}
 &\lc 2^{-\ell\eps} \Big(\sum_k \sum_{W\in \cW^{\ell-k}}\meas(W)
\int_1^2 \sup_{y\in W}\Big|\sum_{j}
A_{k,\ell,j}(y,t)\Big|^p dt\Big)^{1/p}\notag
\\
&\lc 2^{-\ell\eps} \Big(\sum_k \sum_{W\in \cW^{\ell-k}}\meas(W)
\int_1^2 \sup_{y\in W}\sum_{j}\Big|
A_{k,\ell,j}(y,t)\Big|^p dt\Big)^{1/p}.
\label{finalsum}
\end{align}
where for the last estimate we have used that for each fixed $k$, $\ell$, $t$ the functions
$y\mapsto A_{k,\ell,j}(y,t)$, $j\in \bbZ$,  live on (essentially)
disjoint sets. 

To estimate \eqref{finalsum} we set, for  $W\in \cW_j(t)$,
$$A^W_{k,j}(\cdot,t)= \sum_{\substack{Q\in\cQ^k_j(t)\\W(Q,t)=W}}
g_k(x,t)
\chi\ci Q(x)\,
$$
so that
$A_{k,\ell,j}= \sum_{W\in \cW^{\ell-k}} A^W_{k,j}$.
By the definitions of $\cM$ and $\Omega_j$ we have
$\|A^W_{k,j}(\cdot,t)\|_\infty^p\le 2^{(j+1)p}$  for any $W\in \cW_j(t)$.
Therefore we get, for any fixed
$\ell$,
\begin{align*}
&\sum_k
\sum_{W\in  \cW^{\ell-k}}
\meas(W)\cdot\|A_{k,\ell,j}(\cdot,t)\chi\ci W \|_\infty^p 
\notag
\\&=
\sum_k
\sum_{W\in  \cW^{\ell-k}\cap \cW_j(t)}
\meas(W)\cdot\|A_{k,j}^W(\cdot,t)\|_\infty^p 
\notag
\\
&\le 2^{(j+1)p}\sum_{W\in \cW_j(t)} \meas(W)\,\le  2^{(j+1)p} \meas(\Omega_j(t)).
\end{align*}
The expression \eqref{finalsum} is now $\lc 2^{-\ell\eps}(*)_\ell$ 
where
\begin{align*}
(*)_\ell&:= \Big(\sum_j\int_1^2 \sum_k \sum_{W\in \cW^{\ell-k}} \meas(W)
\big\|A_{k,l,j}(\cdot,t)\chi\ci W\big\|_\infty^p\,dt\Big)^{1/p}
\\
& \lc  \Big(\int_1^2 \sum_{j} 2^{jp} \meas(\Omega_j(t))
dt\Big)^{1/p}
\lc  \Big(\int_1^2 \big\| \cM G(\cdot, t)\big\|_p^p
dt\Big)^{1/p}.
\end{align*}
We sum in $\ell$ and use \eqref{peetreinfty}
 to conclude the proof
of   \eqref{mainlsm}.
\qed

\medskip

\end{document}